\def\english{\selectlanguage{english}}
\providecommand\mathbb{\bf}
\newcommand\R{{\mathbb R}}
\newtheorem{thm}{Theorem}[section]
\newtheorem{lemma}{Lemma}[section]
\newtheorem{pro}{Proposition}[section]
\newtheorem{coro}{Corollary}[section]
\newtheorem{remark}{Remark}[section]
\newcounter{Remark}
\renewcommand\theRemark{\arabic{Remark}}
\newcounter{steps}
\newenvironment{proof}[1][]{%
\par\medbreak\setcounter{steps}{0}
{\noindent\bfseries Proof#1. }} {\hfill\fbox{\ }\medbreak}
\newcounter{substeps}[steps]
\newcommand{\vtz}[0]{
\frac{\left ((v_3 - \vtp)\frac{(z,0)}{|z|}, -|z|e_3  \right )}{\sqrt{|z|^2 + (v_3 - \vtp )^2}}}
\newcommand{\pz}[0]{
{^\perp z}}
\newcommand{\olvpolv}[0]{
\left (\frac{(\olv, 0)}{|\olv|},\frac{(\polv, 0)}{|\olv|}
\right )}
\newcommand{\polvolv}[0]{
\left (\frac{(\polv, 0)}{|\olv|},-\frac{(\olv, 0)}{|\olv|}   \right )}
\newcommand{\olvppolvp}[0]{
\left (\frac{(\olvp, 0)}{|\olvp|},\frac{(\polvp, 0)}{|\olvp|}
\right )}
\newcommand{\polvpolvp}[0]{
\left (\frac{(\polvp, 0)}{|\olvp|},-\frac{(\olvp, 0)}{|\olvp|}   \right )}
\newcommand{\z}[0]{
(\oc \olx + \polv ) - ( \oc \olxp + \polvp)}
\newcommand{\tD}[0]{
\tilde{D}}
\newcommand{\tDp}[0]{
\tilde{D}^\prime}
\newcommand{\tDt}[0]{
\tilde{D}_3}
\newcommand{\tDtp}[0]{
\tilde{D}^\prime _3}
\newcommand{\tO}[0]{
{^t {\cal O}}}
\newcommand{\musi}[0]{
\mu _{s s^\prime} ^2 \sigma _{s s^\prime} (|v - v^\prime|) |v - v^\prime |^3}
\newcommand{\inttpi}[0]{
\int _0 ^{2\pi}\!\!\!\!}
\newcommand{\avetpi}[0]{
\frac{1}{2\pi}\int _0 ^{2\pi}\!\!\!\!}
\newcommand{\tC}[0]{
\tilde{C}}
\newcommand{\ltvm}[0]{
{L^2 ( M^{-1}\mathrm{d}v  )}}
\newcommand{\ltm}[0]{
{L^2 _M}}
\newcommand{\D}[0]{
\mathrm{D}}
\newcommand{\ran}[0]{
\mathrm{Range\;}}
\newcommand{\fpl}[0]{
Fokker-Planck-Landau }
\newcommand{\fp}[0]{
Fokker-Planck }
\newcommand{\ol}[1]{
\overline{#1}}
\newcommand{\pol}[1]{
{^{\;\perp} \overline{#1}}}
\newcommand{\olv}[0]{
{\overline{v}}}
\newcommand{\polv}[0]{
{^{\; \perp} \overline{v}}}
\newcommand{\olx}[0]{
{\overline{x}}}
\newcommand{\oly}[0]{
{\overline{y}}}
\newcommand{\polx}[0]{
{^{\; \perp} \overline{x}}}
\newcommand{\olV}[0]{
{\overline{V}}}
\newcommand{\polV}[0]{
{^{\; \perp} \overline{V}}}
\newcommand{\olX}[0]{
{\overline{X}}}
\newcommand{\olE}[0]{
\overline{E}}
\newcommand{\polE}[0]{
{^{\; \perp} \overline{E}}}
\newcommand{\eps}[0]{
\varepsilon}
\newcommand{\oc}[0]{
\omega _c}
\newcommand{\fe}[0]{
f ^\varepsilon}
\newcommand{\fek}[0]{
f ^{\varepsilon _k}}
\newcommand{\Be}[0]{
B ^\varepsilon}
\newcommand{\fin}[0]{
f ^{\mathrm{in}}}
\newcommand{\fo}[0]{
f ^1}
\newcommand{\ftwo}[0]{
f ^2}
\newcommand{\divx}[0]{
\mathrm{div}_x}
\newcommand{\divv}[0]{
\mathrm{div}_v}
\newcommand{\divolx}[0]{
\mathrm{div}_{\overline{x}}}
\newcommand{\divoolx}[0]{
\mathrm{div}_{\omega _c \overline{x}}}
\newcommand{\divolv}[0]{
\mathrm{div}_{\overline{v}}}
\newcommand{\divxv}[0]{
\mathrm{div}_{x,v}}
\newcommand{\divoxv}[0]{
\mathrm{div}_{\omega _c x,v}}
\newcommand{\gradoxv}[0]{
\nabla _{\oc x,v}}
\newcommand{\gradoxvp}[0]{
\nabla _{\oc x,v^\prime}}
\newcommand{\gradxv}[0]{
\nabla _{x,v}}
\newcommand{\gradxvp}[0]{
\nabla _{x,v^\prime}}
\newcommand{\gradoxpvp}[0]{
\nabla _{\oc x^{\prime},v^{\prime}}}
\newcommand{\gradolx}[0]{
\nabla _{\olx}}
\newcommand{\gradolv}[0]{
\nabla _{\olv}}
\newcommand{\ave}[1]{
\left \langle #1 \right \rangle }
\newcommand{\avess}[1]{
\left \langle #1 \right \rangle _{\sigma S}}
\newcommand{\md}[0]{
\mathrm{d}}
\newcommand{\vp}[0]{
v^{\prime}}
\newcommand{\rp}[0]{
r^{\prime}}
\newcommand{\alphap}[0]{
\alpha^{\prime}}
\newcommand{\svvp}[0]{
s(v,v^{\prime})}
\newcommand{\intvp}[1]{
\int _{\R ^3} #1\;\mathrm{d}v^{\prime}  }
\newcommand{\intv}[1]{
\int _{\R ^3} #1\;\mathrm{d}v  }
\newcommand{\intx}[1]{
\int _{\R ^3} #1\;\mathrm{d}x  }
\newcommand{\intvm}[1]{
\int _{\R ^3} #1\;\frac{\mathrm{d}v}{M}  }
\newcommand{\qb}[0]{
Q_B }
\newcommand{\qbp}[0]{
Q_B ^+}
\newcommand{\qbm}[0]{
Q_B ^-}
\newcommand{\qfpl}[0]{
Q_{FPL} }
\newcommand{\qfp}[0]{
Q_{FP} }
\newcommand{\intvvp}[1]{
\int _{\R^3}\!\int _{\R^3} \!\!#1\;\mathrm{d}v^{\prime}\mathrm{d}v}
\newcommand{\intxv}[1]{
\int _{\R^3}\!\int _{\R^3} \!\!#1\;\mathrm{d}v\mathrm{d}x}
\newcommand{\intxvm}[1]{
\int _{\R^3}\!\int _{\R^3} \!\!#1\;\frac{\mathrm{d}x\mathrm{d}v}{M(v)}}
\newcommand{\intolxolv}[1]{
\int _{\R^2}\!\int _{\R^2} \!\!#1\;\mathrm{d}\olv\mathrm{d}\olx}
\newcommand{\intolxv}[1]{
\int _{\R^2}\!\int _{\R^3} \!\!#1\;\mathrm{d}v\mathrm{d}x_1\mathrm{d}x_2}
\newcommand{\intolxpvp}[1]{
\int _{\R^2}\!\int _{\R^3} \!\!#1\;\mathrm{d}v^{\prime}\mathrm{d}x^{\prime}_1\mathrm{d}x^{\prime}_2}
\newcommand{\intolypvp}[1]{
\int _{\R^2}\!\int _{\R^3} \!\!#1\;\mathrm{d}v^{\prime}\mathrm{d}y^{\prime}_1\mathrm{d}y^{\prime}_2}
\newcommand{\olvp}[0]{
\overline{v ^{\prime}}}
\newcommand{\polvp}[0]{
{\; ^\perp \overline{v ^{\prime}}}}
\newcommand{\olxp}[0]{
{\overline{x^\prime}}}
\newcommand{\olyp}[0]{
{\overline{y^\prime}}}
\newcommand{\vtp}[0]{
v^{\prime}_3}
\newcommand{\cocxv}[0]{
C^1 _c (\R^3 \times \R^3)}
\newcommand{\ind}[1]{
{\bf 1}_{\{#1\}}}
\newcommand{\loxv}[0]{
L^1( \R ^3 \times \R ^3)}
\newcommand{\ltxv}[0]{
L^2( \R ^3 \times \R ^3)}
\newcommand{\ltxvm}[0]{
L^2\left ( M^{-1}\mathrm{d}x\mathrm{d}v\right)}
\newcommand{\linftltxvm}[0]{
L^\infty \left ( \R_+,\ltxvm \right )}
\newcommand{\linftloxv}[0]{
L^\infty \left ( \R_+,L^1(\R^3 \times \R ^3) \right )}
\newcommand{\imww}[0]{
\left ( I - \frac{w \otimes w}{|w|^2} \right ) }
\newcommand{\lime}[0]{
\lim _{\varepsilon \searrow 0}}
\newcommand{\limk}[0]{
\lim _{k \to +\infty }}
\newcommand{\inttxv}[1]{
\int _{\R_+} \!\int _{\R ^3} \!\int _{ \R^3}\!\!#1\;\mathrm{d}v\mathrm{d}x\mathrm{d}t}
\newcommand{\inttxvm}[1]{
\int _{\R_+} \!\int _{\R ^3} \!\int _{ \R^3}\!\!#1\;\frac{\mathrm{d}v\mathrm{d}x}{M}\mathrm{d}t}
\newcommand{\sgn}[0]{
\mathrm{sgn}}
\newcommand{\fs}[0]{
f _s}
\newcommand{\fsp}[0]{
f _{s^\prime}}
\newcommand{\ffm}[0]{
\left ( \frac{f}{M} \right )}
\baselinestretch\renewcommand{\baselinestretch}{1.5}
\begin{document}
\english

\title{Finite Larmor radius approximation for the Fokker-Planck-Landau equation}

\author{
Mihai Bostan
\thanks{Laboratoire d'Analyse, Topologie, Probabilit\'es LATP, Centre de Math\'ematiques et Informatique CMI, UMR CNRS 7353, 39 rue Fr\'ed\'eric Joliot Curie, 13453 Marseille  Cedex 13
France. E-mail : {\tt bostan@cmi.univ-mrs.fr}}
\;\;
C\'eline Caldini
\thanks{Laboratoire de
Math\'ematiques de Besan{\c c}on, UMR CNRS 6623, Universit\'e de
Franche-Comt\'e, 16 route de Gray, 25030 Besan{\c c}on  Cedex
France. E-mail : {\tt celine.caldini@univ-fcomte.fr}} 
}

\date{ (\today)}

\maketitle

\begin{abstract}
The subject matter of this paper concerns the derivation of the finite Larmor radius approximation, when collisions are taken into account. Several studies are performed, corresponding to different collision kernels. The main motivation consists in computing the gyroaverage of the Fokker-Planck-Landau operator, which plays a major role in plasma physics. We show that the new collision operator enjoys the usual physical properties~; the averaged kernel balances  the mass, momentum, kinetic energy and dissipates the entropy.

\end{abstract}

\paragraph{Keywords:}
Finite Larmor radius approximation, Boltzmann relaxation operator, Fokker-Planck-Landau equation.

\paragraph{AMS classification:} 35Q75, 78A35, 82D10.

\section{Introduction}
\label{Intro}
\indent

Many studies in plasma physics concern the energy production through thermonuclear fusion. In particular this reaction can be achieved by magnetic confinement {\it i.e.,} a tokamak plasma is controlled by applying a strong magnetic field. Large magnetic fields induce high cyclotronic frequencies corresponding to the fast particle dynamics around the magnetic lines. We concentrate on the linear problem, by neglecting the self-consistent electro-magnetic field. The external electro-magnetic field is supposed to be a given smooth field
\[
E = - \nabla _x \phi,\;\;\Be = \frac{B(x)}{\eps}\;b(x),\;\;|b| = 1
\]
when $\eps >0$ is a small parameter, destinated to converge to $0$, in order to describe strong magnetic fields. The scalar function $\phi$ stands for the electric potential, $B(x)>0$ is the rescaled magnitude of the magnetic field and $b(x)$ denotes its direction. 
As usual, we appeal to  the kinetic description for studying the evolution of the plasma. The notation $\fe = \fe (t,x,v) \geq 0$ stands for the presence density of a population of charged particles with mass $m$ and charge $q$. This density satisfies
\begin{equation}
\label{Equ1} \partial _t \fe + v \cdot \nabla _x \fe + \frac{q}{m} ( E + v \wedge \Be ) \cdot \nabla _v \fe = Q(\fe),\;\;(t,x,v) \in \R_+ \times \R ^3 \times \R ^3
\end{equation}
\begin{equation}
\label{Equ2}
\fe (0, x, v) = \fin (x,v),\;\;(x,v) \in \R^3 \times \R^3
\end{equation}
where $Q$ denotes a collision kernel. The interpretation of the density $\fe$ is straightforward~: the number of charged particles contained at time $t$ inside the infinitesimal volume $\mathrm{d}x\mathrm{d}v$ around the point $(x,v)$ of the position-velocity phase space is given by $\fe (t,x,v) \mathrm{d}x\mathrm{d}v$. The equation \eqref{Equ1} accounts for the fluctuation of the density $\fe$ due to the transport but also to the collisions. We analyze here the linear relaxation operator, but also the bilinear \fpl operator. 

When neglecting the collisions the limit model as $\eps \searrow 0$ comes by averaging with respect to the fast cyclotronic motion \cite{GolSai99, SaiRay02, Fre06, BosAsyAna, BosTraEquSin, BosGuiCen3D, BosNeg09, Bos11}. The problem reduces to homogenization analysis and can be solved using the notion of two-scale convergence \cite{FreSon98, FreSon01, FreMou10}.

We point out that a linearized and gyroaveraged collision operator has been written in \cite{XuRos91}, but the implementation of this operator seems very hard. We refer to \cite{Bri04, BriHah07} for a general guiding-center bilinear Fokker-Planck collision operator. Another difficulty lies in the relaxation of the distribution function towards a local Maxwellian equilibrium. Most of the available model operators, in particular those which are linearized near a Maxwellian, are missing this property. Very recently a set of model collision operators has been obtained in \cite{GarDifSarGra09}, based on entropy variational principles \cite{Bri00}. 

We study here the finite Larmor radius scaling {\it i.e.,} the typical perpendicular spatial length is of the same order as the Larmor radius and the parallel spatial length is much larger. We assume that the magnetic field is homogeneous and stationary
\[
\Be = \left ( 0, 0, \frac{B}{\eps} \right )
\]
for some constant $B>0$ and therefore \eqref{Equ1} becomes
\begin{equation}
\label{Equ3} \partial _t \fe + \frac{1}{\eps} ( v_1 \partial _{x_1} \fe + v_2 \partial _{x_2} \fe ) + v_3 \partial _{x_3} \fe  + \frac{q}{m} E \cdot \nabla _v \fe + \frac{\oc}{\eps} ( v_2 \partial _{v_1} \fe - v_1 \partial _{v_2} \fe ) = Q(\fe)
\end{equation}
where $\oc = qB/m$ stands for the rescaled cyclotronic frequency. The density $\fe$ is decomposed into a dominant density $f$ and fluctuations of orders $\eps, \eps ^2, ...$
\begin{equation}
\label{Equ4}
\fe = f + \eps \fo + \eps ^2 \ftwo + ...
\end{equation}
Combining \eqref{Equ3}, \eqref{Equ4} yields, with the notations $\olx = (x_1, x_2), \olv = (v_1, v_2), \polv = (v_2, - v_1)$
\begin{equation}
\label{Equ5}
{\cal T} f : = \olv \cdot \nabla _\olx f + \oc \polv \cdot \nabla _\olv f = 0
\end{equation}
\begin{equation}
\label{Equ6}
\partial _t f + v_3 \partial _{x_3} f + \frac{q}{m} E \cdot \nabla _v f + {\cal T} f^1 = Q(f)
\end{equation}
\[
\vdots
\]
The equation \eqref{Equ5} appears as a divergence constraint
\[
\divxv \{f (\olv,0, \oc \polv,0)\} = 0.
\]
Equivalently, \eqref{Equ5} says that at any time $t$ the density $f(t,\cdot,\cdot)$ remains constant along the flow associated to $\olv \cdot \nabla _\olx + \oc \polv \cdot \nabla _\olv$
\begin{equation}
\label{Equ7} \frac{\md \ol{X}}{\md s} = \ol{V} (s),\;\;\frac{\md X_3}{\md s} = 0,\;\;\frac{\md \ol{V}}{\md s} = \oc \pol{V} (s),\;\;\frac{\md V_3}{\md s} = 0
\end{equation}
and therefore, at any time $t$, the density $f(t,\cdot,\cdot)$ depends only on the invariants of \eqref{Equ7}
\[
f(t,x,v) = g\left ( t, x_1 + \frac{v_2}{\oc}, x_2 - \frac{v_1}{\oc}, x_3, r = |\olv|, v_3\right ).
\]
The time evolution for $f$ comes by eliminating $f^1$ in \eqref{Equ6}. For doing that, we project onto the kernel of ${\cal T}$, which is orthogonal to the range of ${\cal T}$. In order to get a explicit model for $f$ we need a simpler representation for the orthogonal projection on $\ker  {\cal T}$. Actually this projection appears as the average along the characteristic flow \eqref{Equ7}. Denoting by $\ave{\cdot}$ this projection, we obtain
\begin{equation}
\label{Equ9} 
\ave{\partial _t f + v_3 \partial _{x_3}f + \frac{q}{m} E \cdot \nabla _v f } = \ave{Q(f)},\;\;(t,x,v) \in \R_+ \times \R^3 \times \R^3.
\end{equation}
By one hand, averaging $\partial _t + v_3 \partial _{x_3} + \frac{q}{m} E \cdot \nabla _v $ leads to another transport operator
\[
\ave{\partial _t f + v_3 \partial _{x_3}f + \frac{q}{m} E \cdot \nabla _v f } = \partial _t f + \frac{\ave{\polE}}{B} \cdot \nabla _\olx f + v_3 \partial _{x_3} f + \frac{q}{m} \ave{E_3} \partial _{v_3} f .
\]
The key point here is to choose as new coordinates the invariants of \eqref{Equ7} and to observe that the partial derivatives with respect to these invariants commute with the average operator. More generally, for any smooth vector field $\xi = (\xi _x, \xi_v)$, we obtain the following commutation formula between the divergence and average operators, cf. Proposition \ref{ComDivAve}
\begin{eqnarray}
\ave{\divxv \xi} & = & \divolx  \left \{
\ave{\xi _{\olx} + \frac{^\perp \xi _\olv}{\oc}}
 + \ave{\xi _\olv \cdot \frac{\polv}{|\olv|}} 
 \frac{\olv}{\oc |\olv|} -  \ave{\xi _\olv \cdot \frac{\olv}{|\olv|}} 
 \frac{\polv}{\oc |\olv|} \right \}+ \partial _{x_3} \ave{\xi _{x_3}} \nonumber \\
& + & \divolv \left \{\ave{\xi _\olv \cdot \frac{\polv}{|\olv|}} 
 \frac{\polv}{|\olv|} +  \ave{\xi _\olv \cdot \frac{\olv}{|\olv|}} 
 \frac{\olv}{ |\olv|}    \right \}+ \partial _{v_3} \ave{\xi _{v_3}}. \nonumber
\end{eqnarray}
By the other hand we need to compute the average of the collision kernel $Q$ which is a more complicated task. It is convenient to focus first on the relaxation Boltzmann operator \cite{MarRinSch90}
\[
\qb(f(t,x,\cdot))(v) = \frac{1}{\tau} \intvp{\svvp\{M(v) f(t,x,\vp) - M(\vp) f(t,x,v)\}}
\]
where $\tau >0$ is the relaxation time, $\svvp$ is the scattering cross section and $M$ is the Maxwellian equilibrium with temperature $\theta$
\[
M(v) = \frac{1}{(2 \pi \theta /m) ^{3/2}} e ^ { - \frac{m|v|^2}{2\theta}},\;\;v \in \R^3.
\]
We need to average functions like $(x,v) \to \intvp{C(v,\vp)f(x,\vp)}$, where $C(v,\vp)$ is a given function. 
Since the invariants of the flow $(\ol{X}, \ol{V})$ combines $\olx$ and $\olv$, we get a position-velocity integral operator cf. Proposition \ref{FirstFormula} 
\begin{align*}
\ave{\intvp{C(v,\vp)f(x,\vp)}} (x,v) = \oc ^2 \intolxpvp{{\cal C}( |\olv|, v_3, |\olvp|, \vtp, z)f(\olxp, x_3, \vp)}
\end{align*}
with $z = \oc \olx + \polv - (\oc \olxp + \polvp)$. 
We prove that averaging $\qb$ will lead to a position-velocity integral operator of the same form
\begin{align*}
\ave{\qb}f (x,v) : & = \ave{\qb (f)}(x,v) \\
& = \frac{\oc ^2}{\tau} \!\!\intolxpvp{{\cal S} ( |\olv|, v_3, |\olvp|, \vtp,z) \{ M(v)f(\olxp, x_3, \vp) - M(\vp) f (x,v)\}\!}
\end{align*}
(see Theorem \ref{MainResultBol} for the definition of ${\cal S}$).  Observe that $\ave{\qb}$ is global in $(\olx, v)$, but remains local in $x_3$. In particular it satisfies only a global mass balance, which comes easily by Fubini theorem and the symmetry of ${\cal S}$ cf. Remark \ref{GlobalMass}
\[
\intxv{\ave{\qb}f(x,v)} = 0.
\]
In the case of the relaxation operator $\qb$ we obtain the limit model
\begin{thm}
\label{MainResultBol} Assume that the scattering cross section satisfies \eqref{Equ31}, \eqref{Equ35} and that $E(x) = - \nabla _x \phi (x),\phi \in W^{2,\infty} (\R^3)$. Let us consider $\fin \geq 0, \fin \in \loxv{} \cap \ltxvm{}$ and denote by $\fe$ the weak solution of \eqref{Equ3}, \eqref{Equ2} with $Q = \qb$ for any $\eps >0$. We assume that $(\fe)_{\eps >0}$ is bounded in $\linftltxvm{}$. Then the family $(\fe)_{\eps >0}$ converges weakly $\star$ in $\linftltxvm{}$ to the weak solution of
\begin{equation}
\label{Equ45}
\partial _t f + \frac{\ave{\polE}}{B} \cdot \nabla _\olx f + v_3 \partial _{x_3} f + \frac{q}{m} \ave{E_3} \partial _{v_3} f = \ave{\qb} f,\;\;(t,x,v) \in \R_+ \times \R ^3 \times \R ^3
\end{equation}
\begin{equation}
\label{Equ46} 
f(0,x,v) = \ave{\fin}(x,v),\;\;(x,v) \in \R ^3 \times \R^3
\end{equation}
where the averaged relaxation operator is given by
\[
\ave{\qb}f (x,v) = \frac{\oc ^2}{\tau} \intolxpvp{{\cal S} ( |\olv|, v_3, |\olvp|, \vtp,z) \{ M(v)f(\olxp, x_3, \vp) - M(\vp) f (x,v)\}}
\]
with $z = \oc \olx + \polv - (\oc \olxp + \polvp)$ and the averaged scattering cross section writes
\[
{\cal S} (r, v_3, \rp, \vtp, z) = \sigma(\sqrt{|z|^2 + (v_3 - \vtp)^2}\;) \;\chi (r, \rp, z)
\]
with 
\[
\chi (r, \rp, z)= \frac{\ind{|r - \rp| < |z| < r + \rp}}{\pi ^2\sqrt{|z|^2 - (r-\rp)^2} \sqrt{(r+\rp)^2 - |z|^2}},\;\;r, \rp \in \R_+, v_3, \vtp \in \R,\;\;z \in \R^2.
\]
\end{thm}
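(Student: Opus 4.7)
The plan is weak-$\star$ compactness followed by passage to the limit. From the assumed uniform $\linftltxvm$ bound, extract a subsequence $\fek \rightharpoonup f$ weak-$\star$ in $\linftltxvm$. Multiplying \eqref{Equ3} by $\eps$ and passing to the distributional limit yields the constraint ${\cal T}f = 0$, so that $f(t,\cdot,\cdot) \in \ker {\cal T}$ for a.e. $t$, and in particular $f = \ave{f}$.

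The key step is to pass to the limit in the weak formulation of \eqref{Equ3} against test functions $\varphi \in \coctxv$ chosen in $\ker {\cal T}$; such a choice annihilates the singular $\eps^{-1}{\cal T}\fe$ term. The ordinary transport terms converge to their natural weak limits; since both $f$ and $\varphi$ lie in $\ker {\cal T}$, Proposition \ref{ComDivAve} allows one to replace $\partial_t + v_3 \partial_{x_3} + \frac{q}{m} E \cdot \nabla_v$ by its gyroaverage, producing exactly the drift $\frac{\ave{\polE}}{B}\cdot \nabla_\olx$ and the parallel acceleration $\frac{q}{m}\ave{E_3}\partial_{v_3}$ appearing in \eqref{Equ45}. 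For the collision contribution, linearity and continuity of $\qb$ on $\ltm$ give $\int \varphi\, \qb(\fek) \to \int \varphi\, \qb(f)$; since $\varphi$ is ${\cal T}$-invariant this limit coincides with $\int \varphi\, \ave{\qb}f$, and the task reduces to the explicit computation of $\ave{\qb}f$.

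The core technical work is the identification of the averaged operator. One applies Proposition \ref{FirstFormula} separately to the gain term $\intvp{\svvp M(v) f(\cdot,\vp)}$ and the loss term $\intvp{\svvp M(\vp) f(\cdot,v)}$. Recognizing that $|v-\vp|^2 = |\olv - \olvp|^2 + (v_3-\vtp)^2$, and that in the variables of Proposition \ref{FirstFormula} the two-dimensional piece $|\olv - \olvp|^2$ becomes precisely $|z|^2$ with $z = \oc\olx + \polv - (\oc\olxp + \polvp)$, the cross section factors out as $\sigma(\sqrt{|z|^2+(v_3-\vtp)^2})$. What remains is a purely geometric double gyrophase average: parametrising $\olv = r(\cos\alpha,\sin\alpha)$ and $\olvp = \rp(\cos\alphap,\sin\alphap)$, one has to compute the distribution of $\olv - \olvp$ under uniform $(\alpha,\alphap) \in [0,2\pi]^2$.

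The main obstacle is precisely this double gyrophase computation, which yields $\chi$. By the law of cosines the random variable $|\olv - \olvp|$ takes values in $[|r-\rp|, r+\rp]$, and the change of variables from $(\alpha,\alphap)$ to $(|z|,\text{angle})$ introduces the Jacobian $[(|z|^2-(r-\rp)^2)((r+\rp)^2-|z|^2)]^{-1/2}$, whose angular integration produces the announced factor $\pi^{-2}$ and the indicator $\ind{|r-\rp|<|z|<r+\rp}$. Once $\ave{\qb}$ is identified, uniqueness for the linear integro-differential problem \eqref{Equ45}--\eqref{Equ46} forces the whole family $(\fe)_{\eps>0}$ to converge, and the initial datum in \eqref{Equ46} is obtained by averaging \eqref{Equ2} and using that the initial trace of $f$ lies in $\ker {\cal T}$.
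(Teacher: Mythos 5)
Your strategy coincides with the paper's own proof: weak\,$\star$ compactness from the assumed bound, the $\eps$-multiplied weak formulation giving ${\cal T}f=0$, testing against $\ker {\cal T}$ test functions to kill the singular term, moving $\qb$ onto the test function via its boundedness and symmetry on $\ltxvm{}$, and identifying $\ave{\qb}$ through Proposition \ref{FirstFormula}. Your ``double gyrophase'' reading of $\chi$ (the law of $\olv-\olvp$ under independent uniform phases) is an equivalent, and arguably more transparent, way to obtain the factor $\pi^{-2}$ and the indicator $\ind{|r-\rp|<|z|<r+\rp}$ than the paper's single gyroaverage followed by polar coordinates in $\vp$ and re-expansion in $\alphap$.

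Two steps are asserted rather than justified, and both are carried out explicitly in the paper. First, testing only against $\varphi\in\ker{\cal T}$ yields the weak formulation of \eqref{Equ45} against that subspace alone; to conclude that $f$ is a weak solution one must observe that the formulation is trivially satisfied for zero-average test functions, because $A\cdot\gradxv$ of Remark \ref{AveTraCom} preserves $\ker\ave{\cdot}$ while every term of \eqref{Equ45} applied to $f\in\ker{\cal T}$ lies in $\ker{\cal T}$, hence is orthogonal to them. Second, and more seriously, your closing appeal to ``uniqueness for the linear integro-differential problem'' is doing real work: it is what upgrades subsequential convergence to convergence of the whole family, and it is not automatic for a spatially nonlocal operator. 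The paper proves it by a renormalization argument, $\frac{\md}{\md t}\intxv{|f|}=\intxv{\ave{\qb}(f)\,\sgn f}\le 0$, the sign coming from $M(\vp)\{f\,\sgn f' - |f|\}\le 0$ after exchanging $(\olx,v)$ and $(\olxp,\vp)$; alternatively the dissipativity of $\ave{\qb}$ in $\ltxvm{}$ (Proposition \ref{ProBolAve}) gives an energy estimate. Without one of these arguments the stated conclusion --- convergence of the full family to \emph{the} weak solution --- is not established.
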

The averaging technique allows us to treat many different collision operators, for example the Fokker-Planck kernel (see Appendix \ref{FP} for details)
\[
\qfp (f) = \frac{\theta}{m \tau} \divv \left ( \nabla _v f + \frac{m}{\theta} v f \right ) = \frac{\theta}{m\tau} \divv \left \{M \nabla _v \left ( \frac{f}{M} \right )   \right \}.
\]
\begin{thm}
\label{MainResultFP}
The limit model when $\eps \searrow 0 $ of \eqref{Equ3}, \eqref{Equ2} with $Q = \qfp$ is given by
\begin{equation}
\label{EquLimModFP}
\partial _t f + \frac{\ave{\polE}}{B} \cdot \nabla _\olx f + v_3 \partial _{x_3} f + \frac{q}{m} \ave{E_3} \partial _{v_3} f = \ave{\qfp} f,\;\;(t,x,v) \in \R_+ \times \R ^3 \times \R ^3
\end{equation}
\begin{equation}
\label{EquLimModCI} 
f(0,x,v) = \ave{\fin}(x,v),\;\;(x,v) \in \R ^3 \times \R^3
\end{equation}
where the averaged \fp operator and the diffusion matrix ${\cal L}$ write
\[
\ave{\qfp}f (x,v) = \frac{\theta}{m \tau} \divoxv \left \{ M {\cal L} \nabla_{\omega _c x, v} \left ( \frac{f}{M} \right ) \right \}
\]
\[
{\cal L} = \left ( 
\begin{array}{lll}
2(I_3 - e_3 \otimes e_3)  &  -E\\
E  &\;\;\;   I_3
\end{array}
\right)
, \;\;
E = \left ( 
\begin{array}{rrrr}
0 & 1 & 0\\
-1& 0 & 0\\
0 & 0 & 0
\end{array}
\right).
\]
\end{thm}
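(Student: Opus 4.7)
The plan is to follow the same overall scheme as in the proof of Theorem \ref{MainResultBol}. Substituting the Hilbert expansion $\fe = f + \eps \fo + O(\eps^2)$ into \eqref{Equ3}, the order $\eps^{-1}$ gives the constraint ${\cal T}f = 0$, so $f$ is a function of the invariants $(\oc \olx + \polv, x_3, r = |\olv|, v_3)$ of the flow \eqref{Equ7}. At order $\eps^0$,
\[
\partial _t f + v_3 \partial _{x_3} f + \frac{q}{m}\, E \cdot \nabla _v f + {\cal T}f^1 = \qfp(f),
\]
and projecting onto $\ker {\cal T}$ by the gyro-average $\ave{\cdot}$ eliminates ${\cal T}f^1$ (since the range of ${\cal T}$ is orthogonal to its kernel) and reproduces the averaged transport operator exactly as in the computation preceding \eqref{Equ9}. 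What remains is to compute $\ave{\qfp(f)}$ when $f \in \ker {\cal T}$.

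I would write $\qfp(f) = (\theta/(m\tau))\,\divv \{M \nabla _v(f/M)\}$ and apply the commutation formula of Proposition \ref{ComDivAve} to $\xi = (0, M\nabla_v(f/M))$, so that $\xi_\olx = \xi_{x_3} = 0$ and only $\xi_\olv, \xi_{v_3}$ survive. Because $M$ depends on $v$ only through $r$ and $v_3$, the function $h := f/M$ also lies in $\ker {\cal T}$, and the parametrization of $\ker {\cal T}$-functions yields the key identity
\[
\nabla _\olv h = -\,{}^\perp \nabla _{\oc \olx} h + \frac{\olv}{r}\, \partial _r h.
\]
Using this identity, the scalar products $\xi_\olv \cdot \olv/|\olv|$ and $\xi_\olv \cdot \polv/|\olv|$ split into an invariant piece (proportional to $\partial_r h$) and an angular piece carried by the rotating vectors $\olv/r, \polv/r$ whose gyro-average vanishes; a similar splitting applies to ${}^\perp \xi_\olv$. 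Substituting the resulting averages into Proposition \ref{ComDivAve} and using $\divolx(\cdot /\oc) = \divoolx$, one obtains an expression which can be cast as $(\theta/(m\tau))\,\divoxv \{M {\cal L}\,\nabla _{\oc x, v} h\}$.

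The main obstacle is the algebraic identification of ${\cal L}$. Since $h \in \ker {\cal T}$, the matrix ${\cal L}$ is only determined by its action on such functions, and the particular symmetric choice in the statement is designed so that the kernel identity above collapses ${\cal L}\nabla_{\oc x, v} h$ onto precisely the vector field produced by the commutation formula. The factor $2$ in the perpendicular position block $2(I_3 - e_3 \otimes e_3)$ compensates, via the off-diagonal contribution $-E\nabla_v h = -{}^\perp \nabla_\olv h = -\nabla_{\oc\olx} h - ({}^\perp\olv/r)\partial_r h$, for the extra copy of $\nabla_{\oc \olx} h$ that the kernel identity introduces, so that $2\nabla_{\oc \olx} h - {}^\perp \nabla_\olv h = \nabla_{\oc \olx} h - ({}^\perp \olv/r)\partial_r h$, in exact agreement with the output of the $\divoolx$ term of Proposition \ref{ComDivAve}. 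The velocity block $I_3$ together with the cross-term $E\nabla_{\oc x} h$ likewise collapses to $(\olv/r)\partial_r h$ in the perpendicular velocity directions and $\partial_{v_3} h$ in the parallel one, matching the $\divolv$ and $\partial_{v_3}$ terms of the commutation formula; the vanishing third rows and columns of $2(I_3 - e_3 \otimes e_3)$ and of $E$ reflect $\xi_{x_3} = 0$ and the flow-invariance of $e_3$. Once ${\cal L}$ is identified, weak-$\star$ passage to the limit in $\linftltxvm{}$ against test functions in $\ker {\cal T}$, the convergence of the initial datum to $\ave{\fin}$ and the uniqueness of the limit problem are handled exactly as in the proof of Theorem \ref{MainResultBol}.
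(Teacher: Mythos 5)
Your proposal is correct and follows essentially the same route as the paper's Appendix \ref{FP}: both reduce the computation to the commutation formula of Proposition \ref{ComDivAve} applied to $\xi _v = M \nabla _v (f/M)$ and then identify the matrix ${\cal L}$ from the resulting averages. The only (cosmetic) difference is that you evaluate the intermediate averages $\ave{M\,{^\perp \nabla _\olv}(f/M)}$, $\ave{M \nabla _\olv (f/M)\cdot \olv /|\olv|}$, $\ave{M \nabla _\olv (f/M)\cdot \polv /|\olv|}$ directly from the kernel identity $\nabla _\olv (f/M) = -{^\perp \nabla _{\oc \olx}}(f/M) + (\olv /|\olv|)\,\partial _r (f/M)$ and the vanishing of gyro-averages of rotating vectors, whereas the paper obtains them by a second application of the commutation lemma after rewriting these scalar products as divergences; both computations yield the same ${\cal L}$.
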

Notice that the averaged \fp operator contains no derivatives with respect to $x_3$ since the diffusion matrix ${\cal L}$ has only zero entries on the third line and column; averaging the \fp operator leads to diffusion in velocity but also with respect to the perpendicular position coordinates.

Our main motivation concerns the bilinear \fpl equation, more exactly how to average kernels like
\[
\qfpl (f,f)(v) = \divv \left \{ \intvp{\sigma ( |v - \vp|) S(v - \vp) [f(\vp) \nabla _v f (v) - f(v) \nabla _{\vp} f (\vp)]}  \right \}
\]
where $\sigma$ denotes the scattering cross section and $S(w) = I - \frac{w\otimes w}{|w|^2}$ is the orthogonal projection on the plane of normal $w$, cf. \cite{HazMei03}. Recall that $\qfpl$ satisfies the mass, momentum and kinetic energy balances
\[
\intv{\qfpl (f, f)} = 0,\;\;\intv{v \qfpl (f, f)} = 0,\;\;\intv{\frac{|v|^2}{2}\qfpl (f, f)} = 0.
\]
Moreover it decreases the entropy $f \ln f$ since, by standard computations, we obtain
\begin{align*}
\intv{\!\!\ln f & \;\qfpl (f,f) } \\
& = - \frac{1}{2}\intvvp{\sigma  f(v) f (\vp) \frac{|(v - \vp) \wedge ( \nabla _v \ln f (v) - \nabla _{\vp}\ln f (\vp))|^2}{|v - \vp |^2}} \leq 0.
\end{align*}
We expect that the averaged \fpl operator satisfies the same properties. Nevertheless we will see that all of them hold true only globally in velocity and perpendicular position coordinates. Indeed, the averaged collision kernel will account for the interactions between Larmor circles (characterized by the center $\olx + \polv/\oc$ and the radius $|\olv|/|\oc|$) rather than between particles. We show that the averaged \fpl kernel has the form 
\begin{eqnarray}
\label{FPLExp}
\ave{\qfpl}(f,f):& = &  \ave{\qfpl (f,f)}(x,v) \nonumber \\
& = &  \divoxv \left \{\oc ^2 \intolxpvp{\sigma \chi  f(\olxp, x_3, \vp) A^+ \gradoxv f(x,v)}\right \}\nonumber \\
& -&  \divoxv \left \{\oc ^2 \intolxpvp{\sigma \chi f(x,v) A^- \gradoxpvp f(\olxp,x_3, \vp)}  \right \} \nonumber \\
& &
\end{eqnarray}
with $z = \oc \olx + \polv - (\oc \olxp + \polvp)$, $\sigma = \sigma (\sqrt{|z|^2 + (v_3 - \vtp)^2}\;)$, $\chi = \chi (|\olv|, |\olvp|, z)$  and
\[
\sigma \chi  A^+ (r, v_3, \rp, \vtp, z) = \sum _{i = 1} ^4 \xi ^i (\olx, v, \olxp, \vp) \otimes \xi ^i (\olx, v, \olxp, \vp)
\]
\[
\sigma \chi A^- (r, v_3, \rp, \vtp, z) = \sum _{i = 1} ^4 \eps _i \xi ^i (\olx, v, \olxp, \vp) \otimes \xi ^i (\olxp, \vp, \olx, v)
\]
for some vector fields $(\xi ^i)_{1\leq i \leq 4}$ and $\eps _1 = \eps _2 = - 1, \eps _3 = \eps _4 = 1$, see Proposition \ref{LFPKerT}. 
Actually $A^+, A^-$ have only zero entries on the third line and column and therefore, averaging the \fpl kernel leads to diffusion (and convolution) with respect to velocity but also perpendicular position coordinates. To the best of our knowledge, this is the first completely explicit result on this topic. In particular, the above collisional kernel decreases the entropy $f \ln f$ since, by standard computations we obtain (see Proposition \ref{ConserLaw})
\begin{align*}
\intolxv{\ln f & \ave{\qfpl}(f,f)  }   = - \frac{\oc ^2}{2}\sum _{i = 1} ^4  \intolxv{ \intolxpvp{ f f ^\prime \\
& \times \left (\xi ^i \cdot \nabla \ln f - \eps _i   (\xi ^i) ^\prime \cdot \nabla ^\prime \ln f ^\prime   \right )^2 }}\leq 0,\;x_3 \in \R.
\end{align*}
Here, for any $\xi, \eta \in \R ^6$, the notations $\xi \otimes \eta$ stands for the matrix whose entries are $(\xi \otimes \eta )_{kl} = \xi _k \eta _l$, $1\leq k, l \leq 6$. We obtain formally the following stability result
\begin{thm}
\label{MainResultLFP} Let us consider $\fin \geq 0 $, $( 1 + |\ln \fin |) \fin \in \loxv{}$ and denote by $\fe$ the solution of \eqref{Equ3}, \eqref{Equ2} with $Q = \qfpl$, for any $\eps >0$. Then the limit $f = \lime \fe$ satisfies
\begin{equation}
\label{Equ101}
\partial _t f + \frac{\ave{\polE}}{B} \cdot \nabla _\olx f + v_3 \partial _{x_3} f + \frac{q}{m} \ave{E_3} \partial _{v_3} f = \ave{\qfpl} (f,f),\;\;(t,x,v) \in \R_+ \times \R ^3 \times \R ^3
\end{equation}
\begin{equation}
\label{Equ102} 
f(0,x,v) = \ave{\fin}(x,v),\;\;(x,v) \in \R ^3 \times \R^3
\end{equation}
where the averaged \fpl operator is given by \eqref{FPLExp}.
\end{thm}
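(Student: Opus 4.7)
The plan is to proceed by a formal Hilbert expansion, exactly as in the derivation of the formal model that precedes the statement, and then to reduce the analysis to two ingredients already established in the paper: the commutation formula in Proposition \ref{ComDivAve} for averaging a divergence, and the explicit structure of the averaged bilinear kernel in Proposition \ref{LFPKerT}. Concretely, I insert the ansatz $\fe = f + \eps \fo + \eps ^2 \ftwo + \dots$ into \eqref{Equ3} with $Q = \qfpl$, noting that $\qfpl(\fe,\fe) = \qfpl(f,f) + O(\eps)$ by bilinearity. The order $\eps ^{-1}$ identity yields the constraint ${\cal T} f = 0$, so that at any time $t$, the limit $f(t,\cdot,\cdot)$ depends only on the invariants $\olx + \polv/\oc, x_3, |\olv|, v_3$ of the flow \eqref{Equ7}, i.e. $f \in \ker {\cal T}$.

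The order $\eps ^0$ identity reads
\[
\partial _t f + v_3 \partial _{x_3} f + \frac{q}{m} E \cdot \nabla _v f + {\cal T} f^1 = \qfpl(f,f).
\]
To eliminate the corrector $f^1$, I apply the orthogonal projection $\ave{\cdot}$ onto $\ker {\cal T}$. Since $\ran {\cal T}$ is orthogonal to $\ker {\cal T}$, we have $\ave{{\cal T} f^1} = 0$, leaving
\[
\ave{\partial _t f + v_3 \partial _{x_3} f + \tfrac{q}{m} E \cdot \nabla _v f} = \ave{\qfpl(f,f)}.
\]
The left-hand side is treated exactly as in the derivation of \eqref{Equ9}; because $f \in \ker {\cal T}$, the average commutes with the time derivative and with the partial derivative along the invariant $x_3$, while $\ave{E \cdot \nabla _v f}$ is rewritten through Proposition \ref{ComDivAve} (applied with a divergence-free transport vector field) to produce the drift $\ave{\polE}/B \cdot \nabla _\olx$ together with $(q/m)\ave{E_3} \partial _{v_3}$. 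This reproduces the transport operator appearing on the left of \eqref{Equ101}.

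The decisive step is the identification of $\ave{\qfpl(f,f)}$ with the expression \eqref{FPLExp}. The idea is to use the divergence form of $\qfpl$ and to move the average inside the divergence via Proposition \ref{ComDivAve}: writing $\qfpl(f,f) = \divv J[f,f]$ for the standard Landau current $J$, the commutation formula converts $\ave{\divv J[f,f]}$ into $\divoxv$ applied to suitable averages of $J[f,f]$, which in particular produces diffusion in $\olx$ of strength $1/\oc$ (this is the mechanism that upgrades velocity diffusion into perpendicular position diffusion, and which is responsible for the $\oc$-free scaling in the third and sixth rows/columns being zero). Next, I apply the first position-velocity integral formula of Proposition \ref{FirstFormula} to rewrite $\ave{\sigma(|v-v'|) S(v-v') g(v,v')}$ as the integration against the kernel $\oc ^2 \sigma \chi$ on the Larmor-circle space, with $z = \oc \olx + \polv - (\oc \olxp + \polvp)$. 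Plugging into the averaged current yields the two contributions in \eqref{FPLExp}, and the explicit tensor structure $A^+ = \sum _i \xi ^i \otimes \xi ^i$, $A^- = \sum _i \eps _i \xi ^i \otimes (\xi ^i)'$ follows from the rank-two factorization of the projection $S(v-v') = I - (v-v')\otimes(v-v')/|v-v'|^2$ after averaging in the gyro-phase, as computed in Proposition \ref{LFPKerT}.

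The main obstacle is genuinely this last step: verifying that after averaging the matrix $\sigma(|v-v'|) S(v-v')$ and the extra rank-one pieces produced by the commutation of $\divv$ with $\ave{\cdot}$, everything regroups as the factorized sums $A^\pm$ of Proposition \ref{LFPKerT}. Once this algebraic identification is done, the passage to the initial condition \eqref{Equ102} is automatic, since $f(0,\cdot,\cdot)$ must belong to $\ker {\cal T}$ and agree with $\fin$ up to an element of $\ran {\cal T}$, forcing $f(0,\cdot,\cdot) = \ave{\fin}$. I emphasize that the whole argument is formal, consistently with the statement $f = \lime \fe$ taken as a hypothesis; a rigorous proof would require additional uniform estimates on $\fe$ (entropy/entropy dissipation bounds) which lie outside the scope of the stability result.
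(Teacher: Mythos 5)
Your proposal follows essentially the same route as the paper: a formal Hilbert expansion giving ${\cal T}f=0$ at order $\eps^{-1}$, averaging the order-$\eps^{0}$ equation so that ${\cal T}f^{1}$ drops out, and then invoking Proposition \ref{AveTra} (built on Proposition \ref{ComDivAve}) for the transport part and Proposition \ref{LFPKerT} (built on Propositions \ref{GainLFP}, \ref{LossLFP}) for the identification of $\ave{\qfpl(f,f)}$ with \eqref{FPLExp}. The argument is correct at the formal level intended by the statement, and your added remark on the initial condition is consistent with the paper's treatment.
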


Our paper is organized as follows. In Section \ref{AveOpe} we introduce the average operator along a characteristic flow. Section \ref{AveDiff} is devoted to the commutation properties between average and first order differential operators. The average of the linear Boltzmann kernel is computed in Section \ref{Bol}. We establish its main properties and we prove the convergence result stated in Theorem \ref{MainResultBol}. Section \ref{fpl} is dedicated to the bilinear \fpl kernel, Theorem \ref{MainResultLFP}. We give a explicit form of its average and check the main physical properties. We prove the mass, momentum and total energy conservations for smooth solutions of the averaged \fpl equation coupled to the Poisson equation for the electric field. We also show that the mean Larmor circle center and power  (with respect to the origin) are left invariant. Up to our knowledge this has not been reported yet.

\section{Average operator}
\label{AveOpe}
We recall briefly the definition and properties of the average operator corresponding to the transport operator ${\cal T}$, whose definition in the $\ltxv{}$ setting is 
\[
{\cal T} u = \divxv (u \;b),\;\;b =  (\olv,0, \oc \polv, 0),\;\;\oc = \frac{qB}{m}
\]
for any function $u$ in the domain
\[
\D({\cal T}) = \{u(x,v)\in \ltxv{}\;:\;\divxv (u \;b) \in \ltxv{}\}.
\]
We denote by $\|\cdot\|$ the standard norm of $\ltxv{}$.The characteristics $(X,V)(s;x,v)$ associated to $\olv \cdot \nabla _\olx + \oc \polv \cdot \nabla _\olv$, see \eqref{Equ7}, satisfy 
\[
\frac{\md}{\md s} \left \{ \olX + \frac{\polV}{\oc} \right \} = 0,\;\;\frac{\md \olV}{\md s}= \oc \polV,\;\;\frac{\md X_3}{\md s } = 0,\;\;\frac{\md V_3}{\md s} = 0
\]
implying that
\[
\olV (s) = R(-\oc s) \olv,\;\;\olX  (s) = \olx + \frac{\polv}{\oc} - \frac{\polV (s)}{\oc},\;\;X_3(s) = x_3,\;\;V_3 (s) = v_3
\]
where $R(\alpha)$ stands for the rotation of angle $\alpha$
\begin{equation}\nonumber
R(\alpha) = \left (
\begin{array}{lll}
\cos \alpha  &  -\sin \alpha\\
\sin \alpha  &\;\;\;   \cos \alpha
\end{array}
\right ).
\end{equation}
All the trajectories are $T_c = 2\pi/\oc$ periodic and we introduce the average operator, see \cite{BosTraEquSin}, for any function $u \in \ltxv{}$
\begin{eqnarray}
\ave{u}(x,v) & = & \frac{1}{T_c} \int _0 ^{T_c} u (X(s;x,v), V(s;x,v))\;\md s \nonumber \\
& = & \frac{1}{2\pi} \int _0 ^{2\pi} u \left ( \olx + \frac{\polv}{\oc} - \frac{^\perp \{R(\alpha)\olv\}}{\oc}, x_3, R(\alpha)\olv, v_3  \right ) \;\md \alpha.\nonumber
\end{eqnarray}
It is convenient to introduce the notation $e ^{i\varphi}$ for the $\R^2$ vector $(\cos \varphi, \sin \varphi )$. Assume that the vector $\olv$ writes $\olv = |\olv| e ^{i \varphi}$. Then $R(\alpha) \olv =|\olv| e ^{i (\alpha + \varphi)}$ and the expression for $\ave{u}$ becomes
\begin{eqnarray}
\label{Equ22} \ave{u}(x,v) & = &  \frac{1}{2\pi} \int _0 ^{2\pi} u \left ( \olx + \frac{\polv}{\oc} - \frac{^\perp\{ |\olv| e ^{i (\alpha + \varphi)}\}}{\oc}, x_3, |\olv| e ^{i (\alpha + \varphi)}, v_3  \right ) \;\md \alpha \nonumber \\
& = & \frac{1}{2\pi} \int _0 ^{2\pi} u \left ( \olx + \frac{\polv}{\oc} - \frac{^\perp \{|\olv| e ^{i\alpha }\}}{\oc}, x_3, |\olv| e ^{i \alpha }, v_3  \right ) \;\md \alpha.
\end{eqnarray}
Notice that $\ave{u}$ depends only on the invariants $\olx + \frac{\polv}{\oc}, |\olv|, x_3, v_3$ and therefore belongs to $\ker {\cal T}$. The following two results are justified in \cite{BosGuiCen3D}, Propositions 2.1, 2.2. The first one states that averaging reduces to orthogonal projection onto the kernel of ${\cal T}$. The second one concerns the invertibility of ${\cal T}$ on the subspace of zero average functions and establishes a Poincar\'e inequality.
\begin{pro}
\label{AverageProp} The average operator is linear continuous. Moreover it coincides with the orthogonal projection on the kernel of ${\cal T}$ {\it i.e.,}
\begin{equation}
\label{EquVar}
\ave{u} \in \ker {\cal T}\;\mbox{ and } \;\intxv{(u - \ave{u}) \varphi } = 0,\;\;\forall \; \varphi \in \ker {\cal T}.
\end{equation}
\end{pro}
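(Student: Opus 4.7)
The plan is to verify the four separate assertions packaged in the statement: linearity, $L^2$ continuity, $\ave{u} \in \ker \mathcal{T}$, and the orthogonality relation. Linearity is immediate from the definition as an integral. For all the remaining points, the crucial fact is that the flow $(x,v) \mapsto (X(s;x,v), V(s;x,v))$ is, for each fixed $s$, a measure-preserving diffeomorphism of $\R^3 \times \R^3$; indeed, reading off the explicit formulas above \eqref{Equ22}, this map is the composition of a rotation in the $\olv$-plane and a translation in the $\olx$-plane depending only on $\olv$, both of which have unit Jacobian.

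With that in hand, I would argue continuity by Cauchy--Schwarz applied to the $s$-integral: $|\ave{u}(x,v)|^2 \le \frac{1}{T_c}\int_0^{T_c}|u(X(s),V(s))|^2\,ds$, and then integrating in $(x,v)$ and using the measure-preserving property gives $\|\ave{u}\| \le \|u\|$. For the kernel condition, I would observe that the expression \eqref{Equ22} depends on $(x,v)$ only through the quantities $\olx + \polv/\oc$, $|\olv|$, $x_3$, $v_3$, which are precisely the invariants of the characteristic system, hence $b \cdot \nabla_{x,v} \ave{u} = 0$ pointwise, and therefore $\mathcal{T}\ave{u} = \divxv(\ave{u} b) = 0$ since $\divxv b = 0$.

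The orthogonality relation is the heart of the statement. For $\varphi \in \ker \mathcal{T}$, the function $\varphi$ is constant along the trajectories of \eqref{Equ7} (this is where one uses that $\varphi \in \ker \mathcal{T}$ in $L^2$ implies invariance along the flow, which is standard for divergence-free transport with smooth coefficients, or can be read off after noting $b$ is divergence-free so $\mathcal{T} = b \cdot \nabla$). Therefore $\varphi(X(s;x,v), V(s;x,v)) = \varphi(x,v)$ for every $s$, and I can write
\begin{align*}
\intxv{\ave{u}(x,v)\,\varphi(x,v)}
&= \frac{1}{T_c}\int_0^{T_c}\!\!\intxv{u(X(s;x,v),V(s;x,v))\,\varphi(X(s;x,v),V(s;x,v))}\,ds \\
&= \frac{1}{T_c}\int_0^{T_c}\!\!\intxv{u(x,v)\varphi(x,v)}\,ds = \intxv{u\,\varphi},
\end{align*}
where in the middle equality I changed variables $(x,v) \mapsto (X(-s;x,v),V(-s;x,v))$, appealing again to the measure-preserving property.

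The potential obstacle is making the invariance statement $\varphi \circ (X(s),V(s)) = \varphi$ rigorous for $\varphi \in \ker \mathcal{T}$ in the $L^2$ sense, since a priori $\varphi$ is only a square-integrable function satisfying $\divxv(\varphi b) = 0$ in the sense of distributions. This is handled by the standard renormalization/regularization argument for divergence-free transport: convolving $\varphi$ with a mollifier produces a smooth $\varphi_\eta$ with $\mathcal{T}\varphi_\eta \to 0$ in $L^2$, along whose characteristics invariance holds pointwise, and one passes to the limit using the $L^2$ bounds and measure-preservation. Once this is granted, the change-of-variables computation above is purely mechanical and the proposition follows.
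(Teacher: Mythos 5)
Your proof is correct and follows essentially the same route the paper relies on: the paper itself only cites \cite{BosGuiCen3D} for this proposition, but the change-of-variables computation it reproduces in Remark \ref{Ave4D} is exactly your orthogonality argument (Fubini, invariance of $\varphi$ along the flow, and the measure-preserving property of the characteristics). The one technical point you flag — upgrading $\varphi \in \ker {\cal T}$ in the distributional $L^2$ sense to invariance along the flow — is handled correctly by your mollification remark (or, equivalently, by noting that $s \mapsto u \circ (X(s),V(s))$ is the $C_0$-group generated by ${\cal T}$, so its fixed points are precisely $\ker {\cal T}$), so there is no gap.
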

%%
%\begin{proof}
%The field $\olv \cdot \nabla _\olx + \oc \polv \cdot \nabla _\olv $ is divergence free and thus the map $(x,v) \to (X(s;x,v), V(s;x,v))$ is measure preserving. Therefore we have
%\[
%\|\ave{u}\| \leq  \frac{1}{T_c} \int _0 ^{T_c} \|u (X(s;\cdot, \cdot), V(s;\cdot,\cdot))\|\;\md s  = \|u \| 
%\]
%saying that the linear application $u \to \ave{u}$ is bounded on $\ltxv{}$. Clearly $\ave{u}$ depends only on the invariants of $\olv \cdot \nabla _\olx + \oc \polv \cdot \nabla _\olv $  implying that $\ave{u} \in \ker {\cal T}$. Using now the invariance of $\varphi \in \ker {\cal T}$ along the characteristics $(X,V)$ one gets
%\begin{eqnarray}
%\intxv{u(x,v) \varphi (x,v)} & = & \frac{1}{T_c} \int _0 ^{T_c} \intxv{u (x,v) \varphi (X(-s;x,v), V(-s;x,v))}\md s \nonumber \\
%& = & \frac{1}{T_c}  \int _0 ^{T_c} \intxv{u(X(s;x,v), V(s;x,v))\varphi (x,v)}\md s \nonumber \\
%& = & \intxv{\ave{u}(x,v) \varphi (x,v)} \nonumber.
%\end{eqnarray}
%\end{proof}
%%
\begin{remark}
\label{Ave4D} Notice that $(\olX, \olV)$ depends only on $s$ and $(\olx, \olv)$ and thus the variational characterization in \eqref{EquVar} holds true at any fixed $(x_3, v_3) \in \R^2$. Indeed, for any $\varphi \in \ker {\cal T}$, $(x_3, v_3) \in \R^2$ we have
\begin{eqnarray}
\intolxolv{(u\varphi ) (x,v)} & = & \frac{1}{T_c} \int _0 ^{T_c} \intolxolv{u  (x,v) \varphi (\olX(-s;x,v),x_3, \olV(-s;x,v),v_3)}\md s\nonumber \\
& = &\frac{1}{T_c} \int _0 ^{T_c} \intolxolv{u (\olX(s;x,v),x_3, \olV(s;x,v),v_3) \varphi (x,v)} \md s \nonumber \\
& = & \intolxolv{\ave{u}(x,v)\varphi (x,v)}.\nonumber 
\end{eqnarray}
\end{remark}
We have the orthogonal decomposition of $\ltxv{}$ into invariant functions along the characteristics \eqref{Equ7} and zero average functions
\[
u = \ave{u} + (u - \ave{u}),\;\;\intxv{(u - \ave{u})\ave{u}} = 0.
\]
Notice that ${\cal T} ^\star = - {\cal T}$ and thus the equality $\ave{\cdot} = \mathrm{Proj}_{\ker {\cal T}}$ implies 
\[
\ker \ave{\cdot} = (\ker {\cal T})^\perp = (\ker {\cal T}^\star )^\perp = \overline{\ran {\cal T}}.
\]
In particular $\ran {\cal T} \subset \ker \ave{\cdot}$. Actually we show that $\ran {\cal T}$ is closed, which will give a solvability condition for ${\cal T} u = w$ (cf. \cite{BosGuiCen3D}, Propositions 2.2).
\begin{pro}
\label{TransportProp}
The restriction of ${\cal T}$ to $\ker \ave{\cdot}$ is one to one map onto $\ker \ave{\cdot}$. Its inverse belongs to ${\cal L }(\ker \ave{\cdot}, \ker \ave{\cdot} )$ and we have the Poincar\'e inequality
\[
\|u \| \leq \frac{2 \pi }{|\omega _c |} \|{\cal T} u \|,\;\;\omega _c= \frac{qB}{m}\neq 0
\]
for any $u \in \D({\cal T}) \cap \ker \ave{\cdot}$.
\end{pro}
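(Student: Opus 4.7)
The plan is to exploit the $T_c$-periodicity of the characteristic flow and reduce $\mathcal{T}$ to an ODE along trajectories. First I would check that $\mathcal{T}$ maps $\D(\mathcal{T})\cap\ker\ave{\cdot}$ into $\ker\ave{\cdot}$: along a characteristic, $\mathcal{T}u$ is the time-derivative of $s\mapsto u(X(s;x,v),V(s;x,v))$, so by $T_c$-periodicity
$$\ave{\mathcal{T}u}(x,v)=\frac{1}{T_c}\bigl[u(X(T_c),V(T_c))-u(x,v)\bigr]=0.$$
Injectivity is then immediate from Proposition \ref{AverageProp}: if $u\in\ker\mathcal{T}\cap\ker\ave{\cdot}$, then $u$ lies in $\ker\mathcal{T}\cap(\ker\mathcal{T})^\perp=\{0\}$.

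For surjectivity (together with the quantitative estimate, which will fall out of the same explicit formula), I would, given $w\in\ker\ave{\cdot}$, propose the candidate
$$u(x,v):=\frac{1}{T_c}\int_0^{T_c}\!s\,w\bigl(X(s;x,v),V(s;x,v)\bigr)\,\md s.$$
The compatibility condition making this sensible is exactly $\int_0^{T_c}w\circ(X,V)\,\md s=T_c\ave{w}=0$, which is our hypothesis. Differentiating along the flow, using the group property $(X,V)(s;X(h),V(h))=(X,V)(s+h)$ together with $T_c$-periodicity to shift the interval of integration, I would obtain $\mathcal{T}u=w$. An integration by parts in $s$ (again using periodicity of $s\mapsto w(X(s),V(s))$) turns $u$ into $-\frac{1}{T_c}\int_0^{T_c}\int_0^s w\circ(X,V)\,\md\tau\,\md s$, from which $\ave{u}=0$ follows because averaging commutes with the inner $\tau$-integral along the flow.

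For the Poincar\'e bound I would apply Cauchy-Schwarz inside the $s$-integral,
$$|u(x,v)|^2\leq\frac{1}{T_c^2}\int_0^{T_c}\!s^2\,\md s\cdot\int_0^{T_c}\!|w(X(s;x,v),V(s;x,v))|^2\,\md s,$$
then integrate in $(x,v)$ and invoke the measure-preserving character of the characteristic flow on $\R^3\times\R^3$ (the vector field $b=(\olv,0,\oc\polv,0)$ is divergence-free) to rewrite $\int_{\R^6}|w\circ(X,V)|^2=\|w\|^2$. This produces $\|u\|^2\leq(T_c^2/3)\|w\|^2$, which is stronger than the announced bound $\|u\|\leq (2\pi/|\oc|)\|w\|$. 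Closedness of $\ran\mathcal{T}$ and continuity of $\mathcal{T}^{-1}\in\mathcal{L}(\ker\ave{\cdot},\ker\ave{\cdot})$ are then immediate consequences.

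I do not anticipate a genuine obstacle: the argument is essentially Fourier series on each orbit, packaged as an $L^2$ statement. The only points requiring care are the justification of the explicit formula as an element of $\ltxv{}$ and the legitimacy of Fubini and the change of variables $(x,v)\mapsto(X(s;x,v),V(s;x,v))$; both are standard since the flow is a smooth, volume-preserving diffeomorphism for each fixed $s$.
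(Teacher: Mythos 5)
Your argument is correct. Note first that the paper does not actually prove this proposition: it is quoted from \cite{BosGuiCen3D}, Propositions 2.1--2.2, so there is no in-text proof to compare against; your write-up is a self-contained version of the standard argument used there (solve ${\cal T}u=w$ by an explicit primitive along each $T_c$-periodic orbit). All the individual steps check out: $\ave{{\cal T}u}=0$ by periodicity, injectivity from $\ker {\cal T}\cap(\ker{\cal T})^\perp=\{0\}$, and for your candidate $u=\frac{1}{T_c}\int_0^{T_c}s\,w(X(s),V(s))\,\md s$ one indeed gets ${\cal T}u=w$ (differentiating $h\mapsto \frac{1}{T_c}\int_h^{T_c+h}(\sigma-h)\,w(X(\sigma),V(\sigma))\,\md\sigma$ produces $w(X(h),V(h))-\ave{w}$, and $\ave{w}=0$), while $\ave{u}=0$ follows from Fubini since $\int_0^{T_c}w(X(\tau+h),V(\tau+h))\,\md h=0$ for every $\tau$. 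The Cauchy--Schwarz step combined with the volume-preserving change of variables gives $\|u\|\leq \frac{T_c}{\sqrt{3}}\|w\|=\frac{2\pi}{\sqrt 3\,|\oc|}\|{\cal T}u\|$, which is indeed sharper than the announced constant $2\pi/|\oc|$ (the optimal constant, by Fourier expansion on each orbit, would even be $1/|\oc|$, but that is not needed). The only point you rightly flag as requiring care is the meaning of ${\cal T}u=w$ for $w\in L^2$: this is handled by testing against $\varphi\in \cocxv{}$, changing variables along the flow, and integrating by parts in $s$, which reproduces $\int w\varphi$ using $\ave{w}=0$ and the self-adjointness of $\ave{\cdot}$; with that remark your proof is complete.
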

The natural space when dealing with the linear Boltzmann kernel $\qb$  is $\ltxvm$ rather than $L^2(\md x \md v)$. Motivated by that we introduce the operator ${\cal T}_M : \mathrm{D}({\cal T}_M) \subset \ltxvm \to \ltxvm $ given by ${\cal T}_M u = \divxv (ub)$ for any function $u$ in the domain
\[
\mathrm{D}({\cal T}_M) = \{ u(x,v) \in \ltxvm \;:\; \divxv (ub) \in \ltxvm \}.
\]
Straightforward arguments show that $u \in \mathrm{D}({\cal T}_M)$ iff $u/\sqrt{M} \in \mathrm{D}({\cal T})$ and ${\cal T}_M (u) = \sqrt{M} {\cal T} (u/\sqrt{M})$ for any $u \in {\cal D}({\cal T}_M)$. In particular we have $\ker {\cal T}_M = \sqrt{M} \ker {\cal T}$. Notice that formula \eqref{Equ22} still defines a linear bounded operator on $\ltxvm$, denoted by $\ave{\cdot}_M$, which coincides with the orthogonal projection on the kernel of ${\cal T}_M$, with respect to the scalar product of $\ltxvm$. Indeed, taking into account that $M(v)$ is constant along the characteristic flow of \eqref{Equ7}, we have for any $u \in \ltxvm$
\[
\ave{u}_M = \sqrt{M} \ave{\frac{u}{\sqrt{M}}} \in \sqrt{M} \ker {\cal T} = \ker {\cal T}_M
\]
and for any $\varphi \in \ker {\cal T}_M$
\[
\intxvm{(u - \ave{u}_M) \;\varphi (x,v)}= \intxv{\left (\frac{u}{\sqrt{M}} - \ave{\frac{u}{\sqrt{M}}}    \right )\frac{\varphi}{\sqrt{M}}} = 0.
\]
The Poincar\'e inequality holds also true, with the same constant, since for any $u \in \mathrm{D}({\cal T}_M) \cap \ker \ave{\cdot}_M$ we can write
\[
\|u \|_{L^2(M^{-1})} = \left \|\frac{u}{\sqrt{M}}  \right \| \leq \frac{2\pi}{|\oc |} \left \|{\cal T } \left (\frac{u}{\sqrt{M}}   \right )   \right \| = \frac{2\pi}{|\oc |} \left \|\frac{{\cal T}_M u }{\sqrt{M}}  \right \| = \frac{2\pi}{|\oc |} \| {\cal T}_M u \|_{L^2(M^{-1})}.
\]
From now on, for the sake of simplicity, we will use only the notations ${\cal T}, \ave{\cdot}$, independently of acting on $L^2(\md x \md v)$ or $\ltxvm$.

\section{Average and first order differential operators}
\label{AveDiff}
We intend to average transport operators, see \eqref{Equ9}. Moreover, in order to handle the \fpl kernel we will need to average second order differential operators. For doing that it is convenient to identify derivations which leave invariant $\ker {\cal T}$. It turns out that these derivations are those along the invariants
\[
\psi _1 = x_1 + \frac{v_2}{\oc},\;\;\psi _2 = x_2 - \frac{v_1}{\oc},\;\;\psi _3 = x_3,\;\;\psi _4 = \sqrt{(v_1)^2 + (v_2)^2},\;\;\psi _5 = v_3.
\]
We introduce also $\psi _0 = - \frac{\alpha}{\oc}$, with $\olv = |\olv|e ^{i \alpha}$, $\alpha \in [0,2\pi[$. Notice that $\psi _0$ has a jump of $\frac{2\pi}{\oc}$ across $\olv \in \R _+ \times \{0\}$ but not its gradient with respect to $\olv$
\[
\nabla _\olv \alpha = - \frac{\polv}{|\olv |^2},\;\;\nabla _\olv \psi _0 = \frac{\polv}{\oc |\olv|^2},\;\;{\cal T} \psi _0 = 1. 
\]
The idea is to consider the fields $(b^i)_{0\leq i \leq 5}$ such that 
\[
b ^i \cdot \nabla _{x,v} \psi _j = \delta ^i _j,\;\;0 \leq i, j \leq 5.
\]
Indeed, the map $(x,v) \to (\psi _i (x,v))_{0 \leq i \leq 5}$ defines a change of coordinates
\[
x_1 = \psi _1 + \frac{\psi _4}{\oc}\sin (\oc \psi _0),\;\;x_2 = \psi _2 + \frac{\psi _4}{\oc}\cos (\oc \psi _0),\;\;x_3 = \psi _3
\]
\[
v_1 = \psi _4 \cos (\oc \psi _0),\;\;v_2 = - \psi _4 \sin (\oc \psi _0),\;\;v_3 = \psi _5.
\]
Therefore any function $u = u(x,v)$ can be written $u(x,v) = U(\psi (x,v))$, $\psi = (\psi _i)_{0 \leq i \leq 5}$ and thus, for any $i \in \{0,1,...,5\}$ we have
\[
b^i \cdot \nabla _{x,v} u  =  b^i \cdot \sum _{j = 0} ^5 \frac{\partial U }{\partial \psi _j} (\psi (x,v)) \nabla _{x,v} \psi _j = \frac{\partial U}{\partial \psi _i} (\psi (x,v)).
\]
In other words the derivations $b^i \cdot \nabla _{x,v}$ act like $\partial _{\psi _i}, 0 \leq i \leq 5$. In particular if $u \in \ker {\cal T}$, meaning that $U$ does not depend on $\psi _0$, then $b ^i \cdot \nabla _{x,v} u  = \partial _{\psi _i} U (\psi (x,v))$ does not depend on $\psi _0$, saying that $\ker {\cal T}$ is left invariant by $b^i \cdot \nabla _{x,v}$, $0 \leq i \leq 5$. The following result comes by direct computation and is left to the reader. 
For any smooth vector fields $\xi, \eta$ on $\R^6$, the notation $[\xi, \eta]$ stands for their Poisson bracket {\it i.e.,}
\[
[\xi, \eta] = ( \xi \cdot \nabla _{x,v} ) \eta - (\eta \cdot \nabla _{x,v} ) \xi.
\]
\begin{pro}
\label{Field}
The fields $(b^i)_{0 \leq i \leq 5}$ satisfying $b^i \cdot \nabla _{x,v} \psi _j = \delta ^i _j$, $0\leq i, j \leq 5$ are given by
\[
b^0 \cdot \nabla _{x,v} = \olv \cdot \nabla _\olx  + \oc \polv \cdot \nabla _\olv,\;\;b^1 \cdot \nabla _{x,v} = \partial _{x_1},\;\;b^2 \cdot \nabla _{x,v} = \partial _{x_2},\;\;b^3 \cdot \nabla _{x,v} = \partial _{x_3}
\]
\[
b^4 \cdot \nabla _{x,v} = - \frac{\polv}{\oc |\olv|} \cdot \nabla _\olx + \frac{\olv}{|\olv|} \cdot \nabla _\olv,\;\;b^5 \cdot \nabla _{x,v} = \partial _{v_3}. 
\]
Moreover the Poisson brackets between $(b^i)_{0\leq i \leq 5}$ vanishes or equivalently the derivations $b^i \cdot \nabla _{x,v}$, $0 \leq i \leq 5$ are commuting.
\end{pro}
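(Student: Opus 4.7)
The plan is to verify both assertions by direct computation, exploiting that $(x,v)\mapsto\psi(x,v)=(\psi_0,\ldots,\psi_5)$ is a local diffeomorphism on $\{\olv\neq 0\}$; the $2\pi/\oc$-jump of $\psi_0$ across $\olv\in\R_+\times\{0\}$ is harmless since only its gradient enters the argument.

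For the explicit formulas of the $b^i$, I would first record the six gradients $\nabla_{x,v}\psi_j$. The only one requiring comment is $\nabla_{x,v}\psi_0$, whose $\olv$-component equals $\polv/(\oc|\olv|^2)$ (already stated in the paragraph preceding the proposition) and whose other components vanish; the remaining gradients are immediate, e.g.\ $\nabla_{x,v}\psi_1$ has $\olx$-part $(1,0)$ and $\olv$-part $(0,1/\oc)$, and $\nabla_{x,v}\psi_4$ has only the $\olv$-part $\olv/|\olv|$. Plugging in the candidate fields from the statement, the $36$ relations $b^i\cdot\nabla_{x,v}\psi_j = \delta^i_j$ reduce mostly to sparsity. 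The nontrivial verifications are $b^0\cdot\nabla\psi_0 = \oc\,\polv\cdot\polv/(\oc|\olv|^2) = 1$, the cross-cancellations $b^0\cdot\nabla\psi_k = v_k - v_k = 0$ for $k=1,2$, the orthogonalities $b^4\cdot\nabla\psi_k = 0$ for $k=0,1,2$ (the case $k=0$ using $\olv\cdot\polv = 0$), and $b^4\cdot\nabla\psi_4 = |\olv|^2/|\olv|^2 = 1$; each is a one-line computation.

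For the commutation of the Poisson brackets I would use the identity recorded just before the proposition: for any smooth $u(x,v) = U(\psi(x,v))$, one has $b^i\cdot\nabla_{x,v}u = (\partial_{\psi_i}U)(\psi(x,v))$. Iterating and invoking Schwarz gives
\[
(b^i\cdot\nabla_{x,v})(b^j\cdot\nabla_{x,v})u = (\partial_{\psi_i}\partial_{\psi_j}U)(\psi) = (b^j\cdot\nabla_{x,v})(b^i\cdot\nabla_{x,v})u.
\]
On the other hand, the Leibniz rule yields
\[
(b^i\cdot\nabla_{x,v})(b^j\cdot\nabla_{x,v})u - (b^j\cdot\nabla_{x,v})(b^i\cdot\nabla_{x,v})u = [b^i,b^j]\cdot\nabla_{x,v} u,
\]
the second-order terms cancelling by symmetry of the Hessian. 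Combining these two identities gives $[b^i,b^j]\cdot\nabla_{x,v} u \equiv 0$ on every smooth $u$; applying this to the coordinate functions $u = x_k$ and $u = v_k$ forces $[b^i,b^j] = 0$ componentwise. The only point of care is that all manipulations take place on $\{\olv\neq 0\}$, where $\psi$ defines smooth coordinates and $b^4$ is well-defined; beyond this bookkeeping there is no real obstacle.
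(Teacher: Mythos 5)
Your proposal is correct, and it carries out exactly the ``direct computation'' that the paper leaves to the reader: the duality relations $b^i\cdot\nabla_{x,v}\psi_j=\delta^i_j$ all check out (the few nontrivial ones reduce to $\polv\cdot\polv=|\olv|^2$ and $\olv\cdot\polv=0$, as you note), and your derivation of the vanishing brackets from the identity $b^i\cdot\nabla_{x,v}u=\partial_{\psi_i}U$ together with Schwarz symmetry is precisely the mechanism the paper sets up in the paragraph preceding the proposition. The restriction to $\{\olv\neq 0\}$ is handled appropriately, so there is nothing to add.
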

\begin{remark}
\label{Div}
Notice that $(b^i)_{i\neq 4}$ are divergence free and $\divxv b^4 = \frac{1}{|\olv|}$.
\end{remark}
We claim that the operators $u \to \divxv(ub^i)$, with domain
\[
\D (\divxv(\cdot \;b^i)) = \{ u \in \ltxv{}\;:\; \divxv(u b^i) \in \ltxv\},\;\;0 \leq i \leq 5
\]
are commuting with the average operator. More generally we establish the following result.
\begin{pro}
\label{ComDerAve} Assume that the field $c \cdot \nabla _{x,v}$ is in involution with $b \cdot \nabla _{x,v} = \olv \cdot \nabla _\olx  + \oc \polv \cdot \nabla _\olv$ {\it i.e.,} $[c,b] = 0$. Then the operator $\divxv(\cdot \; c)$ is commuting with the average operator associated to the flow of $b\cdot \nabla _{x,v}$ that is, for any function $u \in \D (\divxv(\cdot \; c ))$ its average $\ave{u}$ belongs to $\D (\divxv(\cdot \;c ))$ and
\[
\divxv(\ave{u}c) = \ave{\divxv(uc)}.
\]
\end{pro}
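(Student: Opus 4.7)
The proof plan is to first check the identity on smooth $u$ by an explicit flow computation, then extend it by density. The essential geometric input is the equivalence
\[
[c,b] = 0 \iff D\Phi_s(x,v)\,c(x,v) = c(\Phi_s(x,v)), \qquad s \in [0,T_c],
\]
where $\Phi_s = (X,V)(s;\cdot,\cdot)$ denotes the $T_c$-periodic characteristic flow of $b\cdot\nabla_{x,v}$. This is standard ODE theory: the vector $W(s;x,v) = D\Phi_s(x,v)\,c(x,v) - c(\Phi_s(x,v))$ satisfies the linear system $\dot W = Db(\Phi_s)\,W + [c,b]\circ \Phi_s$ with $W(0) = 0$, so $W \equiv 0$ if and only if the forcing vanishes along the flow, which is $[c,b] = 0$.

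Granted this flow-invariance, for smooth $u$ the chain rule yields
\[
\ave{c\cdot\nabla_{x,v}u}(x,v) = \frac{1}{T_c}\int_0^{T_c}\!\!(D\Phi_s\,c)\cdot(\nabla u\circ\Phi_s)\,ds = c(x,v)\cdot\frac{1}{T_c}\int_0^{T_c}\!\!\nabla_{x,v}(u\circ\Phi_s)\,ds = c\cdot\nabla_{x,v}\ave{u}.
\]
Similarly, taking the divergence of the bracket identity $(c\cdot\nabla)b = (b\cdot\nabla)c$ and using the direct computation $\divxv b = 0$ gives $b\cdot\nabla(\divxv c) = 0$, i.e.\ $\divxv c \in \ker{\cal T}$; consequently $\ave{u\,\divxv c} = \ave{u}\,\divxv c$ by constancy along the flow. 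Combining these two identities,
\[
\ave{\divxv(uc)} = \ave{u\,\divxv c} + \ave{c\cdot\nabla_{x,v}u} = \ave{u}\,\divxv c + c\cdot\nabla_{x,v}\ave{u} = \divxv(\ave{u}\,c),
\]
which is the announced commutation on smooth functions.

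To handle general $u \in \D(\divxv(\cdot\,c)) \subset \ltxv$, I would approximate $u$ by a sequence $u_n \in C_c^\infty(\R^3\times\R^3)$ with $u_n \to u$ and $\divxv(u_n c) \to \divxv(uc)$ in $\ltxv$; the existence of such a sequence is the standard Friedrichs commutator lemma, which applies because $c$ is a smooth vector field. Applying the commutation identity to each $u_n$ and passing to the limit, using the $\ltxv$-continuity of $\ave{\cdot}$ (Proposition \ref{AverageProp}) together with the closedness of $\divxv(\cdot\,c)$, yields $\ave{u} \in \D(\divxv(\cdot\,c))$ and the equality $\divxv(\ave{u}\,c) = \ave{\divxv(uc)}$. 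The main steps requiring care are thus the flow-invariance equivalence and the verification that $\divxv c \in \ker{\cal T}$; both reduce to short calculations with the hypotheses $[c,b] = 0$ and $\divxv b = 0$.
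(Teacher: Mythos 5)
Your proof is correct, but it follows a genuinely different route from the paper's. The paper argues entirely by duality in $\ltxv{}$: it tests $\ave{\divxv(uc)}$ against smooth $\varphi \in \ker {\cal T}$, where the only input is that $c\cdot \nabla_{x,v}\varphi \in \ker {\cal T}$ (since $c\cdot\nabla_{x,v}$ commutes with ${\cal T}$), and against smooth $\varphi \in \ker \ave{\cdot}$, where it writes $\varphi = {\cal T}\psi$ using the invertibility of ${\cal T}$ on $\ker\ave{\cdot}$ (Proposition \ref{TransportProp}) to see that $c\cdot\nabla_{x,v}\varphi \in \ran {\cal T} = \ker\ave{\cdot}$, so both sides of the weak identity vanish by orthogonality; the membership $\ave{u}\in\D(\divxv(\cdot\,c))$ then falls out of the resulting weak formulation with no differentiation of $u$ and no mollification. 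You instead work at the strong level: the pushforward invariance $D\Phi_s\,c = c\circ\Phi_s$ (correctly derived from $[c,b]=0$ via the variational equation), the chain-rule identity $\ave{c\cdot\nabla_{x,v}u} = c\cdot\nabla_{x,v}\ave{u}$, and the observation that $\divxv c \in \ker{\cal T}$ (which needs the extra, true, fact $\divxv b = 0$), followed by a Friedrichs-type density argument and closedness of $\divxv(\cdot\,c)$ to reach the whole domain. Your route makes the geometric mechanism explicit — $c$ is invariant under the cyclotronic flow, which is exactly why derivations along $c$ commute with averaging — and it is consistent with the paper's Remark \ref{Div} (for $c=b^4$ one has $\divxv b^4 = 1/|\olv| \in \ker{\cal T}$); the price is the extra regularization step and some care about the smoothness of $c$ near $\olv = 0$ when mollifying, a point the paper sidesteps by never differentiating anything but the test function. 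Both arguments are sound; the paper's is shorter given the machinery of Propositions \ref{AverageProp} and \ref{TransportProp} already in place.
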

\begin{proof}
Let us consider $u \in \mathrm{D}(\divxv (\cdot \;c))$. 
For any $\varphi \in \cocxv{} \cap \ker {\cal T}$ we have
\begin{equation}
\label{Equ26}
\intxv{\ave{\divxv(uc)}\varphi } = \intxv{\divxv(uc) \varphi} = - \intxv{u c\cdot \nabla _{x,v} \varphi }.
\end{equation}
But ${\cal T}(c \cdot \nabla _{x,v} \varphi) = c\cdot \nabla _{x,v} ({\cal T} \varphi ) = 0$ saying that $c\cdot \nabla _{x,v} \varphi \in \ker {\cal T}$ and thus
\begin{equation}
\label{Equ27} \intxv{u c \cdot \nabla _{x,v} \varphi } = \intxv{\ave{u}c \cdot \nabla _{x,v} \varphi }.
\end{equation}
Combining \eqref{Equ26}, \eqref{Equ27} we obtain for any $\varphi \in \cocxv{} \cap \ker {\cal T}$
\begin{equation}
\label{Equ28} \intxv{\ave{\divxv(uc)}\varphi} = - \intxv{\ave{u}c \cdot \nabla _{x,v} \varphi }.
\end{equation}
Actually the previous equality holds also true for smooth functions $\varphi \in \ker \ave{\cdot}$. Indeed, by Proposition \ref{TransportProp}, for any smooth function $\varphi \in \ker \ave{\cdot}$ there is $\psi \in \D ({\cal T}) \cap \ker \ave{\cdot}$ such that ${\cal T} \psi = \varphi $ and thus $ c \cdot \nabla _{x,v} \varphi = c\cdot \nabla _{x,v} ( {\cal T} \psi ) = {\cal T}(c \cdot \nabla _{x,v} \psi ) \in \ran {\cal T} = \ker \ave{\cdot}$. Using now the orthogonality between $\ker {\cal T}$ and $\ker \ave{\cdot}$ we deduce that 
\[
\intxv{\ave{\divxv(uc)}\varphi }= 0 = - \intxv{\ave{u} c \cdot \nabla _{x,v} \varphi},\;\;\varphi \in \cocxv{} \cap \ker \ave{\cdot}.
\]
Finally \eqref{Equ28} is verified for any smooth $\varphi$, implying that
\[
\ave{u} \in \D ( \divxv(\cdot \;c))\;\mbox{ and }\;\divxv(\ave{u}c) = \ave{\divxv(uc)}.
\]
\end{proof}
We want to average transport operators, which are written in conservative forms. In order to obtain averaged model still written in conservative form, it is worth to establish the following commutation formula between average and divergence. For the sake of simplicity we discard all difficulties related to the required minimal smoothness.
\begin{pro}
\label{ComDivAve} For any smooth field $\xi = (\xi_x, \xi _v)\in \R^6$ we have the equality
\begin{eqnarray}
\ave{\divxv \xi} & = & \divolx  \left \{
\ave{\xi _{\olx} + \frac{^\perp \xi _\olv}{\oc}}
 + \ave{\xi _\olv \cdot \frac{\polv}{|\olv|}} 
 \frac{\olv}{\oc |\olv|} -  \ave{\xi _\olv \cdot \frac{\olv}{|\olv|}} 
 \frac{\polv}{\oc |\olv|} \right \}+ \partial _{x_3} \ave{\xi _{x_3}} \nonumber \\
& + & \divolv \left \{\ave{\xi _\olv \cdot \frac{\polv}{|\olv|}} 
 \frac{\polv}{|\olv|} +  \ave{\xi _\olv \cdot \frac{\olv}{|\olv|}} 
 \frac{\olv}{ |\olv|}    \right \}+ \partial _{v_3} \ave{\xi _{v_3}}. \nonumber
\end{eqnarray}
In particular we have for any smooth field $\xi _x \in \R^3$
\[
\ave{\divx \xi _x }= \divx \ave{\xi _x}
\]
and for any smooth field $\xi _v \in \R^3$
\begin{eqnarray}
\ave{\divv \xi _v } & = & \divolx  \left \{
\ave{ \frac{^\perp \xi _\olv}{\oc}}
 + \ave{\xi _\olv \cdot \frac{\polv}{|\olv|}} 
 \frac{\olv}{\oc |\olv|} -  \ave{\xi _\olv \cdot \frac{\olv}{|\olv|}} 
 \frac{\polv}{\oc |\olv|} \right \} \nonumber \\
& + & \divolv \left \{\ave{\xi _\olv \cdot \frac{\polv}{|\olv|}} 
 \frac{\polv}{|\olv|} +  \ave{\xi _\olv \cdot \frac{\olv}{|\olv|}} 
 \frac{\olv}{ |\olv|}    \right \}+ \partial _{v_3} \ave{\xi _{v_3}}. \nonumber
\end{eqnarray}
\end{pro}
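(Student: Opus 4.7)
The plan is to decompose the field $\xi$ in the dual frame $(b^i)_{0\leq i \leq 5}$ constructed in Proposition \ref{Field}. Since these fields span $\R^6$ at each point with $\olv \neq 0$, we can write $\xi = \sum_{i=0}^5 \alpha_i b^i$ with coefficients $\alpha_i = \xi \cdot \nabla_{x,v}\psi_i$. Reading off the invariants $\psi_i$, a direct computation gives $(\alpha_1,\alpha_2) = \xi_\olx + {}^\perp \xi_\olv/\oc$, $\alpha_3 = \xi_{x_3}$, $\alpha_4 = \xi_\olv \cdot \olv/|\olv|$ and $\alpha_5 = \xi_{v_3}$, while $\alpha_0$ absorbs the remaining degree of freedom aligned with $b$.

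First I would split $\ave{\divxv \xi} = \sum_{i=0}^5 \ave{\divxv(\alpha_i b^i)}$. The $i=0$ contribution vanishes: since $b^0 = b$ is divergence-free one has $\divxv(\alpha_0 b^0) = {\cal T}\alpha_0 \in \ran {\cal T} \subset (\ker {\cal T})^\perp = \ker \ave{\cdot}$. For each $i \geq 1$, the Poisson bracket $[b^i,b^0]$ vanishes by Proposition \ref{Field}, so Proposition \ref{ComDerAve} applies and yields $\ave{\divxv(\alpha_i b^i)} = \divxv(\ave{\alpha_i} b^i)$. Substituting the explicit expressions of the $b^i$ from Proposition \ref{Field}, the pieces $i=1,2$ assemble into $\divolx \ave{\xi_\olx + {}^\perp \xi_\olv/\oc}$; the pieces $i=3$ and $i=5$ produce $\partial_{x_3}\ave{\xi_{x_3}}$ and $\partial_{v_3}\ave{\xi_{v_3}}$; and the piece $i=4$ yields $-\divolx\bigl(\ave{\xi_\olv \cdot \olv/|\olv|}\,\polv/(\oc|\olv|)\bigr) + \divolv\bigl(\ave{\xi_\olv \cdot \olv/|\olv|}\,\olv/|\olv|\bigr)$.

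This already delivers an asymmetric version of the claimed identity, involving only the radial direction $\olv/|\olv|$. To match the symmetric form in the statement, I would add the identically vanishing quantity
\[
\divolx\!\Bigl(g\,\tfrac{\olv}{\oc|\olv|}\Bigr) + \divolv\!\Bigl(g\,\tfrac{\polv}{|\olv|}\Bigr),\qquad g := \ave{\xi_\olv \cdot \polv/|\olv|}.
\]
That this sum is zero is seen as follows: $\polv/|\olv|$ has vanishing $\olv$-divergence and $\olv/|\olv|$ does not depend on $\olx$, so the expression reduces to $\tfrac{1}{\oc|\olv|}\bigl(\olv\cdot \nabla_\olx g + \oc\polv \cdot \nabla_\olv g\bigr) = {\cal T}g/(\oc|\olv|)$, which vanishes because $g$, being an average, lies in $\ker {\cal T}$. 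Re-grouping the terms produces exactly the formula of Proposition \ref{ComDivAve}, and the two specializations $\xi = (\xi_x,0)$ and $\xi = (0,\xi_v)$ follow by inspection.

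The main obstacle is identifying the $b^4$ contribution with the two non-radial pairs of terms in the statement: the dual-frame decomposition naturally produces only the $\olv/|\olv|$-part, so the tangential $\polv/|\olv|$-part must be put in by hand and justified through the kernel condition ${\cal T}g = 0$ for averaged functions. Everything else is bookkeeping built on Propositions \ref{AverageProp}--\ref{Field} and \ref{ComDerAve}.
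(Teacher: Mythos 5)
Your proposal is correct and follows essentially the same route as the paper: decompose $\xi = \sum_{i=0}^5 (\xi\cdot\nabla_{x,v}\psi_i)\,b^i$ and commute each $\divxv(\cdot\,b^i)$ with the average via Proposition \ref{ComDerAve}. The only cosmetic difference is that the paper retains the $i=0$ term as $\divxv\{\ave{\xi\cdot\nabla_{x,v}\psi_0}\,b^0\}$ — which, written in components of $b^0=(\olv,0,\oc\polv,0)$, is exactly the tangential block $\divolx\bigl(g\,\tfrac{\olv}{\oc|\olv|}\bigr)+\divolv\bigl(g\,\tfrac{\polv}{|\olv|}\bigr)={\cal T}g/(\oc|\olv|)=0$ that you add back by hand and correctly verify to vanish.
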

\begin{proof}
By construction we have $
\sum _{i = 0} ^5 b^i \otimes \gradxv \psi _i  = I
$
and thus 
\[
\xi = \sum _{i = 0} ^5 ( \xi \cdot \gradxv \psi _i )b^i.
\]
The main statement follows thanks to Proposition \ref{ComDerAve}, since we have
\[
\ave{\divxv \xi } = \ave{\sum _{i = 0} ^5 \divxv \{ ( \xi \cdot \gradxv \psi _i ) b^i \}} = \divxv \left \{ \sum _{i = 0} ^5 \ave{\xi \cdot \gradxv \psi _i } b^i \right \}.
\]
The other statements come by considering the fields $(\xi_x,0)$ and $(0,\xi_v)$.
\end{proof}
A direct consequence of Proposition \ref{ComDivAve} is the computation of the average for the transport operator in \eqref{Equ6}.
\begin{pro}
\label{AveTra} Assume that the electric field derives from a smooth potential {\it i.e.,} $E = - \nabla _x \phi$. Then for any $f \in \cocxv{} \cap \ker {\cal T}$ we have
\[
\ave{\partial _t f + v_3 \partial _{x_3} f + \frac{q}{m} E \cdot \nabla _v f + {\cal T}\fo } = \partial _t f + \frac{\ave{\polE}}{B} \cdot \nabla _\olx f + v_3 \partial _{x_3} f + \frac{q}{m} \ave{E_3} \partial _{v_3} f.
\]
\end{pro}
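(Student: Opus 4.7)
The strategy is to treat each of the four terms on the left-hand side separately, using that $f \in \ker {\cal T}$ and that the derivations $\partial_t$, $\partial_{x_1}$, $\partial_{x_2}$, $\partial_{x_3}$, $\partial_{v_3}$ and $b^4 \cdot \gradxv$ all commute with $\ave{\cdot}$ by Propositions \ref{Field} and \ref{ComDerAve}. The terms $\partial_t f$ and $v_3 \partial_{x_3} f$ already lie in $\ker {\cal T}$: the first trivially, the second because $v_3$ is an invariant of the flow \eqref{Equ7} and $\partial_{x_3} f \in \ker {\cal T}$ (as $\partial_{x_3}$ commutes with ${\cal T}$). Hence their averages equal themselves. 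Moreover ${\cal T} f^1 \in \ran {\cal T} \subset (\ker {\cal T})^\perp = \ker \ave{\cdot}$, so $\ave{{\cal T} f^1} = 0$. Similarly, since $\partial_{v_3} f \in \ker {\cal T}$, one factors it out of the average to obtain $\ave{E_3 \partial_{v_3} f} = \ave{E_3}\,\partial_{v_3} f$.

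The delicate term is the perpendicular drift $\frac{q}{m}\ol{E}\cdot \nabla_{\olv} f$. I would first exploit the fact that $f\in \ker {\cal T}$, that is $\olv \cdot \nabla_{\olx} f + \oc \polv \cdot \nabla_{\olv} f = 0$, to rewrite the radial derivative $(\polv/|\olv|)\cdot \nabla_{\olv} f = -(\olv/(\oc|\olv|))\cdot \nabla_{\olx} f$. Decomposing $\ol{E}$ in the orthonormal basis $\{\olv/|\olv|,\polv/|\olv|\}$ of $\R^2$ and using $\;^\perp(\olv/|\olv|) = \polv/|\olv|,\;{}^\perp(\polv/|\olv|) = -\olv/|\olv|$, a direct computation yields
\[
\ol{E}\cdot \nabla_{\olv} f \;=\; \Bigl(\ol{E}\cdot \tfrac{\olv}{|\olv|}\Bigr)\,(b^4\cdot \gradxv)f \;+\; \tfrac{\polE}{\oc}\cdot \nabla_{\olx} f,
\]
where the first term collects the genuinely radial contribution and $\polE = (E_2,-E_1)$.

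It remains to average this identity. Since $(b^4\cdot \gradxv)f \in \ker {\cal T}$ and $\nabla_{\olx} f \in \ker {\cal T}$ (both derivations commute with ${\cal T}$), these factors can be pulled out of the average, giving
\[
\ave{\ol{E}\cdot \nabla_{\olv} f} \;=\; \ave{\ol{E}\cdot \tfrac{\olv}{|\olv|}}\,(b^4\cdot \gradxv)f \;+\; \tfrac{\ave{\polE}}{\oc}\cdot \nabla_{\olx} f .
\]
The key remaining point---and the main obstacle---is to show that the first term vanishes, i.e.\ $\ave{\ol{E}\cdot \olv/|\olv|} = 0$. This is exactly where the hypothesis $E = -\nabla_x \phi$ is used: parametrising the Larmor circle by $\ol{V}(\alpha) = |\olv|e^{i\alpha}$ one has $\frac{d}{d\alpha}\ol{X}(\alpha) = -\ol{V}(\alpha)/\oc$, so
\[
\ave{\ol{E}\cdot \tfrac{\olv}{|\olv|}}(x,v) \;=\; \tfrac{\oc}{2\pi|\olv|}\int_0^{2\pi}\! \tfrac{\md}{\md \alpha}\bigl[\phi(\ol{X}(\alpha),x_3)\bigr]\,\md\alpha \;=\; 0
\]
by $2\pi$-periodicity of the characteristics. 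Combining everything and using $q/(m\oc) = 1/B$ yields the stated formula.
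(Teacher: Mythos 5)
Your proof is correct. The bookkeeping for $\partial_t f$, $v_3\partial_{x_3}f$, ${\cal T}f^1$ and $E_3\partial_{v_3}f$ matches the paper's, and your identity $\olE\cdot\nabla_\olv f = (\olE\cdot\tfrac{\olv}{|\olv|})\,b^4\cdot\gradxv f + \tfrac{\polE}{\oc}\cdot\nabla_\olx f$ checks out (with the paper's convention $\polv=(v_2,-v_1)$ one indeed has $(\olE\cdot\tfrac{\olv}{|\olv|})\tfrac{\polv}{|\olv|}-(\olE\cdot\tfrac{\polv}{|\olv|})\tfrac{\olv}{|\olv|}=\polE$). Where you differ from the paper is in how the perpendicular drift term is extracted: the paper writes $E\cdot\nabla_v f=\divv\{fE\}$ and invokes the general divergence--average commutation formula of Proposition \ref{ComDivAve}, after which it must convert the resulting conservative form back to a transport form using $\divolx\ave{\polE}=0$ and $\partial_{v_3}\ave{E_3}=0$; you instead decompose $\nabla_\olv f$ pointwise in the basis $\{\olv/|\olv|,\polv/|\olv|\}$ and use the constraint ${\cal T}f=0$ to trade the angular velocity-derivative for a spatial one, then factor the $\ker{\cal T}$ factors out of the average. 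Your route is more elementary (it bypasses Proposition \ref{ComDivAve} entirely and avoids the conservative/non-conservative conversion), at the cost of being specific to this transport operator rather than illustrating the general averaging machinery the paper reuses later for the collision kernels. The decisive cancellation is the same in both arguments: your periodicity computation $\ave{\olE\cdot\olv}=\frac{\oc}{2\pi}\int_0^{2\pi}\frac{\md}{\md\alpha}[\phi(\olX(\alpha),x_3)]\,\md\alpha=0$ is exactly the paper's observation that $\olv\cdot\olE=-{\cal T}\phi\in\mathrm{Range\;}{\cal T}\subset\ker\ave{\cdot}$.
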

\begin{proof}
We can write 
\[
\ave{\partial _t f + v_3 \partial _{x_3} f + \frac{q}{m} E \cdot \nabla _v f  + {\cal T} f^1} =\partial _t f + \ave{v_3 \partial _{x_3} f }+ \frac{q}{m} \ave{E \cdot \nabla _v f } 
\]
since $\ave{{\cal T}\fo } = 0$ and $\ave{\partial _t f} = \partial _t \ave{f} = \partial _t f $. The average of $v_3 \partial _{x_3}f$ comes easily thanks to Proposition \ref{ComDerAve}
\[
\ave{v_3 \partial _{x_3} f} = \ave{\divxv \{fv_3 b^3\}} = \divxv\{\ave{fv_3}b^3\} = \divxv\{fv_3 b^3\} = v_3 \partial _{x_3} f.
\]
Observe that ${\cal T}(f \phi) = f \;\olv \cdot \gradolx \phi = - f \;\olv \cdot \olE$ and thus $\ave{f \;\olv \cdot \olE } = 0$. Thanks to Proposition \ref{ComDivAve} one gets
\begin{align*}
\ave{\divv \{fE\}} & = \divolx \ave{f \frac{\polE}{\oc}} + {\cal T} \ave{f \frac{\polv \cdot \olE}{\oc |\olv|^2}} + \partial _{v_3} \ave{f E_3}\\
& = \divolx \left \{f \ave{\frac{\polE}{\oc}}   \right \} + \partial _{v_3} \{f \ave{E_3}  \}
\end{align*}
implying that 
\[
\frac{q}{m} \ave{\divv \{f E\}} = \divolx \left \{f \frac{\ave{\polE}}{B}   \right \} + \frac{q}{m} \partial _{v_3} \{ f \ave{E_3}\}.
\]
Using again Proposition \ref{ComDerAve} notice that 
\[
\partial _{v_3} \ave{E_3} = \divxv \{ \ave{E_3} b^5\} = \ave{\divxv \{E_3 b^5\}} = \ave{\partial _{v_3} E_3} = 0
\]
and
\[
\divolx \ave{\polE} = \ave{\divolx \polE} = 0
\]
and our statement follows.
\end{proof}
\begin{remark}
\label{AveTraCom} We have proved that averaging the transport operator $a \cdot \gradxv := v_3 \partial _{x_3} + \frac{q}{m} E \cdot \nabla _v $ leads to $A\cdot \gradxv : = \frac{\ave{\polE}}{B} \cdot \gradolx + v _3 \partial _{x_3} + \frac{q}{m} \ave{E_3} \partial _{v_3}$ which verifies
\[
\ave{a \cdot \gradxv f } = A \cdot \gradxv f,\;\;f \in \cocxv{} \cap \ker {\cal T}.
\]
By construction, the operator $A \cdot \gradxv$ leaves invariant the subspace of smooth functions of $\ker {\cal T}$. By antisymmetry (since $\divxv A = 0$) it is easily seen that $A \cdot \gradxv $ also leaves invariant the subspace of smooth functions in $\ker \ave{\cdot}$. Indeed, consider $h$ a zero average smooth function and let us prove that $\ave{A \cdot \gradxv h } = 0$ For any smooth $f$ in $\ker {\cal T}$ we have
\[
\intxv{A \cdot \gradxv h \;f } = - \intxv{h A \cdot \gradxv f} = 0
\]
by the orthogonality between $\ker \ave{\cdot}$ and $\ker {\cal T}$, and thus $\ave{A \cdot \gradxv h} = 0$. Finally $A \cdot \gradxv $ is commuting with the average operator $\ave{A\cdot \gradxv f} = A \cdot \gradxv \ave{f}$ for any smooth $f$.
\end{remark}

\section{The relaxation collision operator}
\label{Bol}

In this section we analyze the linear Boltzmann collision kernel \cite{Pou92, PouSch91}
\begin{equation}
\label{Equ30} \qb (f) (x,v) = \frac{1}{\tau} \intvp{\svvp \{ M(v) f(x,\vp) - M(\vp) f(x,v)\} }
\end{equation}
where the scattering cross section satisfies
\begin{equation}
\label{Equ31} \svvp = s(\vp,v),\;\;0 < s_0 \leq \svvp \leq S_0 <+\infty,\;\;v, \vp \in \R^3.
\end{equation}
We recall the standard properties of this operator. Here $Q_B ^\pm $ denote the gain/loss relaxation collision operators
\[
\qbp (f) (v)= \frac{1}{\tau} \intvp{\svvp M(v) f (\vp)},\;\;\qbm (f) (v)= \frac{1}{\tau} \intvp{\svvp M(\vp) f (v)}.
\]
\begin{pro}
\label{LinBolOpe} Assume that the scattering cross section satisfies \eqref{Equ31}. Then \\
1. The gain/loss collision operators $Q_B ^\pm$
are linear bounded operators on $\ltvm$, with $\|Q_B ^ \pm\| \leq S_0 /\tau$, and symmetric with respect to the scalar product of $\ltvm$.\\
2. For any $f \in \ltvm $ we have
\[
\intvm{\qb (f) (v) f(v) }= - \frac{1}{2\tau} \intv{\intvp{\svvp M(v) M(\vp) \left [\frac{f(v)}{M(v)} - \frac{f(\vp)}{M(\vp)}   \right ]^2}}\leq 0.
\]
\end{pro}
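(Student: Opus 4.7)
The proof is a direct verification in two steps, using the two hypotheses in \eqref{Equ31}: symmetry $s(v,\vp) = s(\vp,v)$ and the two-sided bound $s_0 \leq \svvp \leq S_0$.

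For Part 1, I would treat $\qbm$ and $\qbp$ separately. The loss operator is a multiplication operator,
\[
\qbm(f)(v) = \frac{f(v)}{\tau} \intvp{\svvp M(\vp)},
\]
whose symbol is bounded by $S_0/\tau$ (since $\int M \md \vp = 1$). Hence $\|\qbm f\|_{L^2(M^{-1})} \leq (S_0/\tau)\|f\|_{L^2(M^{-1})}$, and symmetry in $\ltvm$ is immediate because it is just multiplication by a real function of $v$. For the gain operator, I would apply Cauchy--Schwarz with the splitting $s f = \sqrt{sM}\cdot \sqrt{s/M}\,f$, obtaining
\[
\left|\intvp{s(v,\vp) f(\vp)}\right|^2 \leq S_0 \intvp{s(v,\vp) M(\vp)} \cdot \intvp{\frac{f(\vp)^2}{M(\vp)}} \leq S_0^2 \|f\|_{\ltvm}^2,
\]
which combined with the factor $M(v)/\tau$ and integration against $M^{-1}\md v$ gives the bound $S_0/\tau$. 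Symmetry of $\qbp$ in $\ltvm$ reduces to the identity
\[
\langle \qbp f, g\rangle_{\ltvm} = \frac{1}{\tau}\intv{\intvp{\svvp f(\vp) g(v)}},
\]
which is invariant under the swap $v \leftrightarrow \vp$ thanks to $s(v,\vp) = s(\vp,v)$.

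For Part 2, I would use the standard symmetrization trick. Starting from
\[
\intvm{\qb(f)\,f} = \frac{1}{\tau}\intvvp{\svvp\bigl[M(v) f(\vp)f(v)/M(v) - M(\vp) f(v)^2/M(v)\bigr]},
\]
the symmetry $s(v,\vp) = s(\vp,v)$ allows one to also write the same integral after exchanging the roles of $v$ and $\vp$ and average the two expressions. Expanding the target square
\[
M(v)M(\vp)\left(\frac{f(v)}{M(v)} - \frac{f(\vp)}{M(\vp)}\right)^2 = \frac{M(\vp)}{M(v)}f(v)^2 - 2 f(v) f(\vp) + \frac{M(v)}{M(\vp)}f(\vp)^2
\]
and noting that the first and third terms yield equal integrals against $s(v,\vp)\md v \md \vp$ by symmetry, one identifies the symmetrized integrand as $-2\tau$ times $\intvm{\qb(f)f}$. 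The non-positivity of the result is then manifest.

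There is no real obstacle here: both parts are routine once one invokes $s(v,\vp) = s(\vp,v)$ and $\int M = 1$. The only care needed is to justify the Fubini exchanges, which is straightforward since $s$ is bounded and $f \in \ltvm$ makes all the double integrals absolutely convergent (e.g., $|f(v) f(\vp)| \leq \frac{1}{2}(f(v)^2 M(\vp)/M(v) + f(\vp)^2 M(v)/M(\vp))$, and each piece is integrable after multiplication by $s$).
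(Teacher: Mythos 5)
Your proof is correct; the paper itself states Proposition \ref{LinBolOpe} without proof, as a recall of standard facts. Your argument — the multiplication bound for $\qbm$, the Cauchy--Schwarz splitting $sf = \sqrt{sM}\cdot\sqrt{s/M}\,f$ for $\qbp$, and the $v \leftrightarrow \vp$ symmetrization for the dissipation identity — is exactly the argument the authors carry out for the averaged operator in Proposition \ref{ProBolAve}, so there is nothing to add.
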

We want to average $\qb (f)$ for functions $f$ satisfying the constraint \eqref{Equ5}. In this section the operators ${\cal T}$ and $\ave{\cdot}$ should be understood in the $\ltxvm$ framework. We need to compute the average of functions like $\intvp{C(v,\vp)f(x,\vp)}$ where $C(v,\vp)$ is a given function. The corresponding result in the bidimensional framework has been announced in \cite{CRASBosCal12}. We will see that we only need to consider functions $C$ which are left invariant by any rotation around $e_3 = (0,0,1)$. Therefore we assume that for any orthogonal matrix ${\cal O} \in {\cal M}_3 (\R)$ such that ${\cal O}e_3 = e_3$ we have
\begin{equation}
\label{Equ32} C(^t{\cal O} v, {^t{\cal O}} \vp) = C(v,\vp),\;\;v, \vp \in \R^3.
\end{equation}
These functions are precisely those depending only on $|\olv|, v_3, |\olvp|, \vtp$ and the angle between $\olv$ and $\olvp$
\[
C(v,\vp) = \tC (|\olv|, v_3, |\olvp|, \vtp, \varphi),\;\;\varphi = \arg \olvp - \arg \olv.
\]
\begin{pro}
\label{FirstFormula} Assume that the function $C(v,\vp)$ satisfies \eqref{Equ32} and belongs to the space $L^2(M^{-1}(v)M(\vp)\md v \md \vp)$. Then for any function $f \in \ker {\cal T}$ we have
\begin{equation}
\label{EquVariant}
\ave{\intvp{C(v,\vp)f(x,\vp)}} (x,v) = \oc ^2 \intolxpvp{{\cal C}( |\olv|, v_3, |\olvp|, \vtp, z)f(\olxp, x_3, \vp)}
\end{equation}
where $z = \oc \olx + \polv - (\oc \olxp + \polvp )$
\[
{\cal C} (r, v_3, \rp, \vtp, z) = \frac{\tC (r, v_3, \rp, \vtp, \varphi) + \tC (r, v_3, \rp, \vtp, -\varphi)}{2\pi ^2 \sqrt{|z|^2 - (r - \rp)^2}\sqrt{(r+\rp )^2 - |z|^2}}\;\ind{|r - \rp| < |z|< r + \rp}
\]
and for any $|z| \in (|r - \rp|, r + \rp)$, $\varphi \in (0,\pi)$ is the unique angle such that 
\[
|z|^2 = r^2 + (\rp)^2 - 2 r \rp \cos \varphi.
\]
\end{pro}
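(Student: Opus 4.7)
The plan is to apply the explicit integral formula \eqref{Equ22} for the average operator to $u(x,v) = \intvp{C(v,\vp)f(x,\vp)}$, and to reduce the resulting expression to the stated form by a sequence of changes of variables. This yields
\[
\ave{u}(x,v) = \frac{1}{2\pi}\int_0^{2\pi}\!\intvp{C(R(\alpha)\olv, v_3, \vp)\,f(\olX(\alpha), x_3, \vp)}\,\md\alpha,
\]
with $\olX(\alpha) = \olx + (\polv - R(\alpha)\polv)/\oc$. Since $f \in \ker{\cal T}$, one has $f(x, \vp) = g(\olx + \polvp/\oc, x_3, |\olvp|, \vtp)$ for some $g$, so $f(\olX(\alpha), x_3, \vp) = g(\olX(\alpha) + \polvp/\oc, x_3, |\olvp|, \vtp)$; and by \eqref{Equ32} the dependence of $C(R(\alpha)\olv, v_3, \vp)$ on $\vp$ factors through $|\olvp|$, $\vtp$ and the relative angle $\arg\olvp - \arg\olv - \alpha$.

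I would next switch to polar coordinates $\olvp = \rp e^{i\beta'}$ (so $\md \vp = \rp\,\md\rp\,\md\beta'\,\md\vtp$) and substitute $\beta' = \psi + \arg\olv + \alpha$ (Jacobian $1$), which decouples $C$ from $\alpha$ and $\beta'$ separately and turns $C$ into $\tC(r, v_3, \rp, \vtp, \psi)$ with $r = |\olv|$. The same polar substitution yields $\polvp = (\rp/r)R(\alpha + \psi)\polv$, whence the Larmor-centre argument of $g$,
\[
\oly(\alpha, \psi, \rp) := \olX(\alpha) + \polvp/\oc = \olx + \polv/\oc + R(\alpha)\bigl[(\rp/r)R(\psi) - I\bigr]\polv/\oc,
\]
lies, for fixed $(\psi, \rp, \vtp)$, on the circle centred at $\olx + \polv/\oc$ of radius $|z(\psi)|/|\oc|$, with $|z(\psi)|^2 = r^2 + (\rp)^2 - 2 r \rp \cos\psi$. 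The inner $\alpha$-average is thus the circular mean of $g(\cdot, x_3, \rp, \vtp)$ on that circle.

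To convert the $(\alpha, \psi)$-integration into integration over $\oly \in \R^2$, I would set $\rho = |z|/|\oc|$, differentiate $|z(\psi)|^2$ to obtain $2 r \rp \sin\psi\,\md\psi = 2\oc^2 \rho\,\md\rho$, and use the factorisation $\sin\psi = \sqrt{(|z|^2 - (r-\rp)^2)((r+\rp)^2 - |z|^2)}/(2 r \rp)$. Splitting $\psi \in [0, 2\pi)$ at $\pi$ and using the symmetry $\psi \leftrightarrow 2\pi - \psi$ (which preserves $|z|$ and hence the circle of $\oly$) produces the symmetrised factor $\tC(\cdots, \psi) + \tC(\cdots, -\psi)$ appearing in the definition of ${\cal C}$. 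Merging the arc-length measure on the circles with $\md\rho$ into the area element $\md\oly$, the double integral collapses into a single integral over $\oly \in \R^2$ with weight exactly $\oc^2 {\cal C}(r, v_3, \rp, \vtp, z)$, where now $z = \oc(\olx - \oly) + \polv$.

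It then remains to recognise the right-hand side of \eqref{EquVariant}: the substitution $\olxp \to \oly := \olxp + \polvp/\oc$ (Jacobian $1$) rewrites $z = \oc\olx + \polv - \oc\olxp - \polvp$ as $z = \oc\olx + \polv - \oc\oly$, and, passing $\olvp$ to polar coordinates, one checks that neither ${\cal C}$ nor $g(\oly, x_3, \rp, \vtp)$ depends on $\arg\olvp$, so the $\beta'$-integral contributes exactly the factor $2\pi$ that compensates the $1/(2\pi)$ of the cyclotron average. The main technical point is this Jacobian step: establishing the precise factorisation of $\sin\psi$ that produces the kernel $\chi$ with its integrable singularities at $|z| = r \pm \rp$; everything else is careful bookkeeping of the rotations $R(\alpha)$ and of the Larmor-circle geometry.
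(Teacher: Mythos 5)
Your argument is correct and follows essentially the same route as the paper's proof: the explicit averaging formula \eqref{Equ22}, the rotation invariance \eqref{Equ32} of $C$, the reduction of $f$ to a function $g$ of the invariants, and the change of variables from the relative angle to $|z|$ with the same factorisation of $\sin\psi$ producing the kernel $\chi$. The only immaterial difference is the final identification: the paper re-inflates the reduced integral to the five-dimensional form by inserting an artificial average over $\arg \olvp$ and computing $\det \frac{\partial (x_1^\prime, x_2^\prime)}{\partial (l,\alpha)} = l/\oc^2$, whereas you match both sides against the reduced two-dimensional integral over the Larmor centre $\oly$.
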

\begin{proof}
For $(x,v) \in \R ^3 \times \R ^3$ we have
\[
\left ( \intvp{C(v,\vp) f(x,\vp)} \right )^2 \leq \intvp{(C(v,\vp))^2 M(\vp)} \intvp{\frac{(f(x,\vp))^2}{M(\vp)}}
\]
implying that
\[
\left \| \intvp{C(v,\vp) f(x,\vp)}  \right \|_{\ltxvm} \leq \|C\|_{L^2(M^{-1}(v)M(\vp)\md v \md \vp)} \|f \|_{\ltxvm}.
\]
Therefore the function $(x,v) \to \intvp{C(v,\vp) f(x,\vp)}$ belongs to $\ltxvm{}$ and can be averaged in $\ltxvm{}$.
Consider $(x,v) \in \R ^3 \times \R ^3$. By formula \eqref{Equ22} we have
\begin{align*}
I:& = \ave{\intvp{C(v,\vp)f(x,\vp)}}(x,v) \\
& =\avetpi \intvp{C(|\olv|e ^{i\alpha}, v_3, \vp) f \left (\olx + \frac{\polv}{\oc} - \frac{^\perp \{|\olv| e ^{i\alpha}\}}{\oc}, x_3, \vp   \right ) }\md \alpha.
\end{align*}
For any fixed $\alpha \in [0,2\pi)$ we use the cylindrical coordinates
\[
\vp = ( \rp e ^{i(\varphi + \alpha)}, \vtp),\;\;\rp \in \R_+,\;\;\varphi \in [-\pi, \pi)
\]
and therefore
\begin{align}
I & = \avetpi \int _\R \int _{-\pi} ^\pi \int _{\R_+} C(|\olv| e ^{i\alpha}, v_3, \rp e ^{i(\varphi + \alpha)}, \vtp) \nonumber \\
& \times f \left (\olx + \frac{\polv}{\oc} - \frac{^\perp \{|\olv| e ^{i\alpha}\}}{\oc}, x_3, \rp e ^{i (\varphi + \alpha)},\vtp   \right )\rp \md \rp \md \varphi \md \vtp \md \alpha.
\end{align}
But $f \in \ker {\cal T}$ and thus there is $g$ such that
\begin{equation}
\label{EquRedPhaseSpace}
f(x,v) = g\left ( \olx + \frac{\polv}{\oc}, x_3, |\olv|, v_3\right )
\end{equation}
implying that 
\[
f \left (\olx + \frac{\polv}{\oc} - \frac{^\perp \{|\olv| e ^{i\alpha}\}}{\oc}, x_3, \rp e ^{i (\varphi + \alpha)},\vtp   \right ) = g \left (\olx + \frac{\polv}{\oc} - \frac{^\perp \{ |\olv| e ^{i \alpha}\}}{\oc} + \frac{^\perp \{\rp  e ^{i (\varphi + \alpha)}  \}}{\oc}, x_3, \rp, \vtp   \right ).
\]
By one hand notice that $\rp  e ^{i (\varphi + \alpha)} - |\olv| e ^{i \alpha} = l  e ^{i (\psi + \alpha)}$ where $l^2 = r ^2 + (\rp )^2 - 2 r \rp \cos \varphi $, $r = |\olv|$ and $\psi$ depends on $r, \rp, \varphi$ but not on $\alpha$. By the other hand, since $C$ is invariant by rotation around $e_3$ we deduce that 
\[
C(re ^{i\alpha}, v_3, \rp  e ^{i (\varphi + \alpha)}, \vtp) = \tC (r, v_3, \rp, \vtp, \varphi),\;\;\varphi = \arg \olvp - \arg \olv.
\]
The map $\varphi \to l(\varphi) = \sqrt{r ^2 + (\rp )^2 - 2 r \rp \cos \varphi}$ defines a coordinate change between $\varphi \in (0,\pi)$ and $l \in (|r - \rp|, r + \rp)$ and
\[
\md \varphi = \frac{2l\md l}{\sqrt{l^2 - (r - \rp)^2} \sqrt{(r + \rp )^2 - l^2}}.
\]
By Fubini theorem one gets
\begin{align*}
I & = \frac{1}{2\pi}\int _{\R}\int _{-\pi}^\pi \int _{\R_+} \inttpi \tC (r, v_3, \rp, \vtp, \varphi) g \left (\olx + \frac{\polv}{\oc} + \frac{^\perp \{l e ^{i (\psi + \alpha)}\}}{\oc}, x_3, \rp, \vtp\right )\rp\;\md \alpha \md \rp \md \varphi \md \vtp \\
& = \frac{1}{2\pi}\int _{\R}\int _{-\pi}^\pi \int _{\R_+} \inttpi \tC (r, v_3, \rp, \vtp, \varphi) g \left (\olx + \frac{\polv}{\oc} + \frac{^\perp \{l e ^{i  \alpha}\}}{\oc}, x_3, \rp, \vtp\right )\rp\;\md \alpha \md \rp \md \varphi \md \vtp \\
& = \frac{1}{2\pi}\int _{\R}\int _{\R_+} \inttpi \int _0 ^{\pi} \{\tC (r, v_3, \rp, \vtp, \varphi) + \tC(r, v_3, \rp, \vtp, -\varphi) \}\\
& \times  g \left (\olx + \frac{\polv}{\oc} + \frac{^\perp \{l(\varphi) e ^{i \alpha}\}}{\oc}, x_3, \rp, \vtp\right )\rp\;\md \varphi \md \alpha \md \rp \md \vtp \\
& = \frac{1}{2\pi}\int _{\R}\int _{\R_+} \inttpi \int _{|r - \rp|}^ {r + \rp} \{\tC (r, v_3, \rp, \vtp, \varphi(l)) + \tC(r, v_3, \rp, \vtp, -\varphi (l)) \}\\
& \times  g \left (\olx + \frac{\polv}{\oc} + \frac{^\perp \{l e ^{i \alpha}\}}{\oc}, x_3, \rp, \vtp\right )\frac{2l\rp\;\md l\md \alpha \md \rp  \md \vtp}{\sqrt{l^2 - (r - \rp)^2} \sqrt{(r+\rp)^2 - l^2}}.
\end{align*}
For any $\alphap \in [0,2\pi)$ we have
\[
g \left (\olx + \frac{\polv}{\oc} + \frac{^\perp \{l e ^{i \alpha}\}}{\oc}, x_3, \rp, \vtp\right ) = f \left (\olx + \frac{\polv}{\oc} + \frac{^\perp \{l e ^{i \alpha}\}}{\oc} - \frac{^\perp \{\rp e ^{i \alphap}\}}{\oc}, x_3, \rp e ^{i \alphap}, \vtp   \right )
\]
and performing the change of coordinates $\vp = (\rp e ^{i \alphap}, \vtp)$ leads to
\begin{align*}
I & = \frac{1}{2\pi ^2} \inttpi \int _{\R_+} \int _{\R} \inttpi \int _{\R_+} \{\tC (r, v_3, \rp, \vtp, \varphi(l)) + \tC(r, v_3, \rp, \vtp, -\varphi (l)) \} \\
& \times f \left (\olx + \frac{\polv}{\oc} + \frac{^\perp \{l e ^{i \alpha}\}}{\oc} - \frac{^\perp \{\rp e ^{i \alphap}\}}{\oc}, x_3, \rp e ^{i \alphap}, \vtp   \right )\frac{\ind{|r - \rp| < l < r + \rp}\rp \md \rp \md \alphap \md \vtp l \md l \md \alpha}{\sqrt{l^2 - (r - \rp)^2} \sqrt{(r+\rp)^2 - l^2}}\\
& = \frac{1}{2 \pi ^2} \inttpi \int _{\R_+} \int _{\R^3} \{\tC (r, v_3, |\olvp|, \vtp, \varphi(l)) + \tC(r, v_3, |\olvp|, \vtp, -\varphi (l))   \} \\
& \times f \left (\olx + \frac{\polv}{\oc} + \frac{^\perp \{l e ^{i \alpha}\}}{\oc} - \frac{\polvp}{\oc}, x_3, \vp   \right )\frac{\ind{|\;|\olv| - |\olvp|\;| < l < |\olv| + |\olvp|} \md \vp  l \md l \md \alpha}{\sqrt{l^2 - (|\olv| - |\olvp|)^2} \sqrt{(|\olv| + |\olvp|)^2 - l^2}}.
\end{align*}
Using the notation 
\[
{\cal C} (r, v_3, \rp, \vtp, z) = \frac{\tC (r, v_3, \rp, \vtp, \varphi) + \tC (r, v_3, \rp, \vtp, -\varphi)}{2\pi ^2 \sqrt{|z|^2 - (r - \rp)^2}\sqrt{(r+\rp )^2 - |z|^2}}\;\ind{|r - \rp| < |z|< r + \rp}
\]
where for any $|z| \in (|r - \rp|, r + \rp)$, $\varphi \in (0,\pi)$ is the unique angle such that 
\[
|z|^2 = r^2 + (\rp)^2 - 2 r \rp \cos \varphi
\]
one gets
\[
I = \int _{\R ^3} \inttpi \int _{\R_+} {\cal C}( |\olv|, v_3, |\olvp|, \vtp, - ^\perp \{l e ^{i\alpha}\}) f \left (\olx + \frac{\polv}{\oc} + \frac{^\perp \{l e ^{i \alpha}\}}{\oc} - \frac{\polvp}{\oc}, x_3, \vp   \right )\;l \md l \md \alpha \md \vp.
\]
We take as new coordinates 
\[
\olxp = \olx + \frac{\polv}{\oc} + \frac{^\perp \{ l e ^{i \alpha}\}}{\oc} - \frac{\polvp}{\oc}
\]
Observing that $\det \frac{\partial (x_1 ^\prime, x_2 ^ \prime)}{\partial (l, \alpha)} = \frac{l}{\oc ^2}$ we deduce that
\[
I = \oc ^2 \intolxpvp{{\cal C}( |\olv|, v_3, |\olvp|, \vtp, (\oc \olx + \polv) - (\oc \olxp + \polvp))f(x_1^\prime, x_2 ^\prime, x_3, \vp)}.
\]
\end{proof}
\begin{remark}
The constraint ${\cal T} f = 0$ allows us to reduce the right hand side of \eqref{EquVariant} to a four dimensional integral. Indeed, thanks to \eqref{EquRedPhaseSpace} we obtain, using the notation $\oly = \olx + \frac{\polv}{\omega _c}$
\begin{align*}
I & = \oc ^2 \intolxpvp{{\cal C}( |\olv|, v_3, |\olvp|, \vtp, (\oc \olx + \polv) - (\oc \olxp + \polvp))\;g\left (\olxp + \frac{\polvp}{\omega _c}, x_3, |\olvp|, \vtp\right )} \\
& = \omega _c ^2 \intolypvp{{\cal C}(|\olv|, v_3, |\olvp|, \vtp,\omega _c (\oly - \olyp)) \;g (\olyp, x_3, |\olvp|, \vtp) } \\
& = 2\pi \omega _c ^2 \int _{\R^2} \int _\R \int _{\R_+} {\cal C}(|\olv|, v_3, r^{\prime}, \vtp,\omega _c (\oly - \olyp)) \;g (\olyp, x_3, r^{\prime}, \vtp) \;r^{\prime} \md r ^{\prime} \md \vtp \md \olyp.
\end{align*}
We prefer to keep the five dimensional integral representation since, in the sequel, we will introduce similar integral terms, but with densities $f$ not satisfying the constraint ${\cal T } f = 0$.
\end{remark}
\begin{remark}
\label{EvenOdd} If the function $\tC(r, v_3, \rp, \vtp, \varphi)$ is odd with respect to $\varphi$, then ${\cal C}= 0$ and
\[
\ave{\intvp{C(v,\vp) f (x, \vp)}} = 0,\;\;f \in \ker {\cal T}.
\]
\end{remark}
\begin{remark}
\label{Normalize} Let $\chi$ be the function
\[
\chi (r, \rp, z) = \frac{\ind{|r - \rp| < |z|< r + \rp}}{\pi ^2 \sqrt{|z|^2 - (r - \rp)^2}\sqrt{(r+\rp )^2 - |z|^2}}
\]
for any $r, \rp \in \R_+, z \in \R^2$. Then for every $r, \rp \in \R_+$, $\chi (r,\rp, z)\md z$ is a probability measure on $\R^2$
\[
\int_{\R ^2} \chi (r, \rp, z)\;\md z = 1,\;\;r,\rp \in \R_+
\]
and $\ave{\intvp{C(v,\vp) f(x,\vp)}}$ appears as a convolution with respect to the invariants $\oc \olx +\polv$. Indeed, using the formula
\[
f(\olxp, x_3, \vp) = g \left ( \olxp + \frac{\polvp}{\oc}, x_3, |\olvp|, \vtp\right)
\]
we obtain
\begin{align*}
\ave{\intvp{C(v,\vp) f(x,\vp)}}(x,v)  & = \int _{\R^3}\;\;\int _{\R^2} {\cal C} (|\olv|, v_3, |\olvp|, \vtp, (\oc \olx + \polv) - (\oc \olxp + \polvp)) \\
& \times g \left ( \olxp + \frac{\polvp}{\oc}, x_3, |\olvp|, \vtp\right)\;\md (\oc \olxp + \polvp)\md \vp.
\end{align*}
\end{remark}
\begin{remark}
\label{LinfAve} The conclusion of Proposition \ref{FirstFormula} also holds true for bounded functions $f$ which are constant along the flow \eqref{Equ7}, provided that $C(v,\vp) \in L^\infty(\md v; L^1(\md \vp))$ and satisfies \eqref{Equ32}. Indeed, in this case $f \to \intvp{C(v,\vp) f(x,\vp)}$ is bounded on $L^\infty (\md x\md v)$
\[
\left \| \intvp{C(v,\vp) f(x,\vp)}  \right \|_{L^\infty (\md x\md v)} \leq \|C\|_{L^\infty(\md v; L^1(\md \vp))} \;\|f \|_{L^\infty (\md x\md v)}
\]
and using the $L^\infty$ version of the average operator, the same computations as those in the proof of Proposition \ref{FirstFormula} show that
\[
\ave{\intvp{C(v,\vp) f(x,\vp)}}(x,v) = \oc ^2 \intolxpvp{{\cal C} (|\olv|, v_3, |\olvp|, \vtp, z) f (\olxp, x_3, \vp)}.
\]
\end{remark}
\begin{coro}
\label{RelBolAve} Assume that the scattering cross section satisfies \eqref{Equ31} and 
\begin{equation}
\label{Equ35}
\svvp = \sigma (|v - \vp|),\;\;v,\vp \in \R^3
\end{equation}
for some function $\sigma : \R_+ \to \R_+$. Then for any $f \in \ker {\cal T}$ we have
\[
\ave{\qb f } (x,v)  = \frac{\oc ^2}{\tau} \intolxpvp{{\cal S} ( |\olv|, v_3, |\olvp|, \vtp,z) \{ M(v)f(\olxp, x_3, \vp) - M(\vp) f (x,v)\}}
\] 
with $z = \oc \olx + \polv - (\oc \olxp + \polvp)$ and
\[
{\cal S} (r, v_3, \rp, \vtp,z) = \sigma (\sqrt{|z|^2 + (v_3 - \vtp)^2}\;) \chi (r, \rp, z).
\]
\end{coro}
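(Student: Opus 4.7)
The plan is to split $\qb = Q_B^+ - Q_B^-$ and average each piece separately, using Proposition \ref{FirstFormula} twice: once on the gain part with $C(v,\vp)=\sigma(|v-\vp|)$, and once on the loss part with $C(v,\vp)=\sigma(|v-\vp|)M(\vp)$ applied to the constant function $1$. The whole mechanism hinges on the observation that the Maxwellian $M$ depends only on the invariants $|\olv|$ and $v_3$, hence $M\in\ker{\cal T}$, so that $M$ can be freely moved inside or outside the averaging bracket.

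For the gain term I would write $Q_B^+(f)(x,v)=\tfrac{M(v)}{\tau}\intvp{\sigma(|v-\vp|)f(x,\vp)}$ and pull out $M(v)$: since $M(v)\in\ker{\cal T}$ and $\ave{\cdot}$ is linear, $\ave{Q_B^+ f}=\tfrac{M(v)}{\tau}\ave{\intvp{\sigma(|v-\vp|)f(x,\vp)}}$. The kernel $C(v,\vp)=\sigma(|v-\vp|)$ clearly satisfies the rotational invariance \eqref{Equ32} (any rotation around $e_3$ is orthogonal and preserves Euclidean distances). Computing the reduced kernel, one uses $|v-\vp|^2 = r^2+(\rp)^2-2r\rp\cos\varphi + (v_3-\vtp)^2$ together with the defining relation $|z|^2=r^2+(\rp)^2-2r\rp\cos\varphi$ to get $\tilde{C}(r,v_3,\rp,\vtp,\varphi)=\sigma(\sqrt{|z|^2+(v_3-\vtp)^2})$, which is even in $\varphi$; hence ${\cal C} = {\cal S}$, and Proposition \ref{FirstFormula} yields the gain contribution in the advertised form.

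The loss term is the subtle piece: we have $Q_B^-(f)(x,v)=\tfrac{f(x,v)}{\tau}\,\nu(v)$ with $\nu(v)=\intvp{\sigma(|v-\vp|)M(\vp)}$. Since $\nu$ is a function of $|v|$ alone, $\nu\in\ker{\cal T}$, and together with $f\in\ker{\cal T}$ this gives $\ave{Q_B^- f}=Q_B^- f$. The task is to re-express $\nu(v)$ as a six-dimensional integral matching the claimed form. For this I would apply Proposition \ref{FirstFormula} to $C(v,\vp)=\sigma(|v-\vp|)M(\vp)$ with the constant input $1\in\ker{\cal T}$; the kernel $\tilde{C}$ remains even in $\varphi$ and carries an extra factor $M(\vp)$ which depends only on $|\olvp|$ and $\vtp$, so the reduced kernel is ${\cal S}(|\olv|,v_3,|\olvp|,\vtp,z)\,M(\vp)$. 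This gives $\nu(v)=\oc^2\intolxpvp{{\cal S}\,M(\vp)}$, and multiplying by $-f(x,v)/\tau$ produces exactly the loss part of the claimed identity.

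Adding the two contributions yields the stated formula. The main obstacle to flag is the loss-term manipulation: one must recognize that although $Q_B^-f$ is pointwise invariant under the flow and hence unchanged by averaging, it admits a nontrivial representation as a convolution-type integral in the perpendicular position variable $\olxp$, obtained by applying the averaging machinery to the constant function. A minor technical point is that $\sigma(|v-\vp|)$ is only bounded and not in $L^2(M^{-1}(v)M(\vp)\,\md v\md\vp)$, so strictly speaking one invokes the $L^\infty$ version of Proposition \ref{FirstFormula} recorded in Remark \ref{LinfAve} (or a truncation/density argument), which applies since $\sigma\leq S_0$ implies $C\in L^\infty(\md v;L^1(\md\vp))$ after multiplication by $M(\vp)$.
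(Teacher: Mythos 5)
Your overall strategy---split $\qb=\qbp-\qbm$, treat the gain part with Proposition \ref{FirstFormula} and the loss part by pulling $f(x,v)$ out and applying the $L^\infty$ version (Remark \ref{LinfAve}) to $C(v,\vp)=\svvp M(\vp)$ acting on the constant $1$---is exactly the paper's, and your computation of $\tC$ and of ${\cal S}=\sigma\chi$ is the correct one. The loss part of your argument is fine as written. The gain part, however, does not quite close in the form you give it. You factor out $M(v)$ and try to average $(x,v)\mapsto\intvp{\sigma(|v-\vp|)f(x,\vp)}$ with kernel $C=\sigma$; you rightly note that $\sigma\notin L^2(M^{-1}(v)M(\vp)\,\md v\,\md\vp)$, but the escape routes you propose do not apply either: Remark \ref{LinfAve} needs $C\in L^\infty(\md v;L^1(\md\vp))$ and an $L^\infty$ input, whereas $\intvp{\sigma(|v-\vp|)}=+\infty$ (since $\sigma\geq s_0>0$) and $f$ is only in $\ltxvm$. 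Worse, because $\sigma\geq s_0$ and $M^{-1}$ is not integrable, the function $\intvp{\sigma(|v-\vp|)f(x,\vp)}$ need not belong to $\ltxvm$ at all, so the average you want to take of it is not defined in the framework of Section \ref{AveOpe}; a truncation/density argument would have to be spelled out. The paper's proof avoids all of this by \emph{not} pulling $M(v)$ out: it applies Proposition \ref{FirstFormula} directly to $C(v,\vp)=\sigma(|v-\vp|)M(v)$, which does satisfy the $L^2(M^{-1}(v)M(\vp)\,\md v\,\md\vp)$ hypothesis because $\int_{\R^3}\!\int_{\R^3}\sigma^2 M(v)M(\vp)\,\md v\,\md\vp\leq S_0^2$, and which still verifies the rotational invariance \eqref{Equ32} since $M(v)$ depends only on $|\olv|$ and $v_3$; the resulting ${\cal C}$ is then ${\cal S}\,M(v)$ and the gain term comes out directly in the advertised form. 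With that single adjustment your proof coincides with the paper's.
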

\begin{proof}
Clearly the function $C(v,\vp) = \sigma (|v - \vp|) M(v)$ satisfies \eqref{Equ32}, belongs to $L^2(M^{-1}(v)M(\vp)\md v \md \vp)$ and we have
\[
\tilde{s} (r, v_3, \rp, \vtp, \varphi) = \sigma ( \sqrt{r ^2 + (\rp )^2 - 2 r \rp \cos \varphi + (v_3 - \vtp)^2}\;)
\]
\[
{\cal S} (r, v_3, \rp, \vtp, z) = \sigma (\sqrt{|z|^2 + (v_3 - \vtp)^2}\;) \chi (r, \rp, z).
\]
Thanks to Proposition \ref{FirstFormula} we obtain, with $z = (\oc \olx + \polv) - (\oc \olxp + \polvp)$
\[
\ave{\intvp{\svvp M(v)f (x,\vp)}} = \oc ^2 \intolxpvp{{\cal S}(|\olv|, v_3, |\olvp|, \vtp, z) M(v) f(\olxp, x_3, \vp)}.
\]
Since $f$ belongs to $\ltxvm$ and remains constant along the flow \eqref{Equ7} we have
\[
\ave{\intvp{\svvp M(\vp) f (x,v)}} = f(x,v) \ave{\intvp{\svvp M(\vp)}}
\]
where the first average operator should be understood in the $\ltxvm{}$ setting and the second one in the $L^\infty(\md x \md v)$ setting. Remark \ref{LinfAve} applied with $C(v,\vp) = \svvp M(\vp) \in L^\infty(\md v; L^1(\md \vp))$ and the constant function $1 \in L^\infty(\md x \md v)$ yields
\[
\ave{\intvp{\svvp M(\vp)}} = \oc ^2 \intolxpvp{{\cal S}(|\olv|, v_3, |\olvp|, \vtp, z)M(\vp)}.
\]
Therefore we obtain 
\[
\ave{\intvp{\svvp M(\vp)f(x,v) }} = \oc ^2 \intolxpvp{{\cal S}(|\olv|, v_3, |\olvp|, \vtp, z)M(\vp) f(x,v)}
\]
and our statement follows immediately.
\end{proof}
We intend to extend the previous averaged collision operator to all densities $f$, not only those satisfying the constraint ${\cal T} f = 0$. Think that, when simulating numerically these models, the particle density may not satisfy exactly ${\cal T} f = 0$, and thus we need to construct such a extension. One possibility consists to appeal to the decomposition $f = \ave{f} + (f - \ave{f})$ and to neglect the fluctuation $f - \ave{f}$, leading to the operator
\begin{align*}
f \to & \tilde{Q}_Bf  := \ave{\qb \ave{f}}\\
& = \frac{\oc ^2}{\tau} \intolxpvp{{\cal S} ( |\olv|, v_3, |\olvp|, \vtp,z) \{ M(v)\ave{f}(\olxp, x_3, \vp) - M(\vp) \ave{f} (x,v)\}}
\end{align*}
for any $f \in \ltxvm{}$. Clearly $\tilde{Q}_B$ coincides with $\ave{\qb f }$ for any $f \in \ker {\cal T}$. Notice that for any $(x,v), (x_3 ^\prime, \vtp)$ the function
\[
(\olxp,\olvp) \to {\cal S} (|\olv|, v_3, |\olvp|, \vtp, \oc \olx + \polv - (\oc \olxp + \polvp)) M(v)
\]
depends only on $\oc \olxp + \polvp, |\olvp|$ and therefore, thanks to Remark \ref{Ave4D}, we obtain a simpler form
\[
\tilde{Q}_B f = \frac{\oc ^2}{\tau} \intolxpvp{{\cal S} ( |\olv|, v_3, |\olvp|, \vtp,z) \{ M(v){f}(\olxp, x_3, \vp) - M(\vp) \ave{f} (x,v)\}}.
\]
Nevertheless notice that it is not possible to remove the average in the loss part of the previous operator. Since $Q_B$ and $\ave{\cdot}$ are linear bounded operators on $\ltxvm{}$ we deduce that $\tilde{Q}_B$ is linear bounded on $\ltxvm{}$. The properties of $\tilde{Q}_B$ come immediately from the properties of $\qb$, cf. Proposition \ref{LinBolOpe}. For example we have for any $f \in \ltxvm{}$
\begin{align*}
\intxv{\tilde{Q}_B & f \frac{f}{M}}  = \intxv{\ave{Q_B \ave{f}} \frac{f}{M}} = \intxv{Q_B \ave{f} \frac{\ave{f}}{M}} \\
& = - \frac{1}{2\tau} \intxv{\intvp{\svvp M(v) M(\vp) \left [\frac{\ave{f}(x,v)}{M(v)} - \frac{\ave{f}(x,\vp)}{M(\vp)}  \right]^2}\!\!}\leq 0.
\end{align*}
Another possible extension is given by
\begin{equation}
\label{Equ37} \ave{Q_B}f := \frac{\oc ^2}{\tau} \intolxpvp{{\cal S} ( |\olv|, v_3, |\olvp|, \vtp,z) \{ M(v){f}(\olxp, x_3, \vp) - M(\vp) {f} (x,v)\}}
\end{equation}
for any $f \in \ltxvm{}$, which is very similar to the operator $\qb$ in \eqref{Equ30}. We keep this operator as extension for the operator in Corollary \ref{RelBolAve}. The properties of $\ave{\qb}$ are summarized below
\begin{pro}
\label{ProBolAve} Assume that the scattering cross section satisfies \eqref{Equ31}, \eqref{Equ35}. Then\\
1. The operator $\ave{Q_B}$ is linear bounded on $\ltxvm{}$ and symmetric with respect to the scalar product of $\ltxvm{}$.\\
2. For any $f \in \ltxvm{}$ we have
\begin{align*}
\intxv{\ave{\qb}(f) \frac{f}{M}} & = - \frac{\oc^2}{2\tau} \intxv{\intolxpvp{{\cal S}( |\olv|, v_3, |\olvp|, \vtp,z)  M(v) M(\vp) \\
& \times \left [\frac{f(x,v)}{M(v)} - \frac{f(\olxp, x_3,\vp)}{M(\vp)} \right ] ^2}\!}\leq 0.
\end{align*}
\end{pro}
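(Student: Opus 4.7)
The argument has three ingredients: boundedness on $\ltxvm$, symmetry with respect to $\langle\cdot,\cdot\rangle_{\ltxvm}$, and the entropy dissipation. All three rely crucially on two symmetries of the averaged cross-section: the kernel ${\cal S}$ is symmetric under the exchange $(r,v_3)\leftrightarrow(\rp,\vtp)$, because $\chi(r,\rp,z)$ is symmetric in $(r,\rp)$ and $\sigma(\sqrt{|z|^2+(v_3-\vtp)^2}\;)$ depends only on $(v_3-\vtp)^2$; and ${\cal S}$ is even in $z$, since $\chi(r,\rp,\cdot)$ is. Together with the normalization $\int_{\R^2}\chi(r,\rp,z)\,\md z=1$ from Remark \ref{Normalize}, these will drive everything.

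For the boundedness (statement 1), I would treat the loss and gain parts of \eqref{Equ37} separately. For the loss part, $\sigma\leq S_0$ and the change of variables $\olxp\mapsto z=\oc\olx+\polv-\oc\olxp-\polvp$ (with Jacobian $\oc^2$) give
\[
\oc^2\intolxpvp{{\cal S}(|\olv|,v_3,|\olvp|,\vtp,z)M(\vp)}\leq S_0 \intvp{M(\vp)}=S_0,
\]
so the loss term is pointwise bounded by $(S_0/\tau)|f(x,v)|$ and thus bounded by $(S_0/\tau)\|f\|_{\ltxvm}$ in the $\ltxvm$ norm. For the gain part, I would apply Cauchy--Schwarz against the probability measure $\oc^2{\cal S}(|\olv|,v_3,|\olvp|,\vtp,z)M(\vp)/\|{\cal S}M(\vp)\|_{L^1}\md\olxp\md\vp$, divide by $M(v)$, integrate over $(x,v)$ and apply Fubini to exchange the order of integration. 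The symmetry of ${\cal S}$ together with the same Jacobian computation then bound this by $(S_0/\tau)^2\|f\|^2_{\ltxvm}$.

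The symmetry statement is the core algebraic identity. Writing
\[
\langle \ave{\qb}f,g\rangle_{\ltxvm}=\frac{\oc^2}{\tau}\intxv{\intolxpvp{{\cal S}\Big[f(\olxp,x_3,\vp)g(x,v)-\tfrac{M(\vp)}{M(v)}f(x,v)g(x,v)\Big]}}
\]
I would apply Fubini and swap the dummy variables $(\olx,v)\leftrightarrow(\olxp,\vp)$ (while keeping the common $x_3$). Under this swap $z\mapsto -z$, and using the two symmetries above, ${\cal S}$ is left invariant, while the first bracket becomes $f(x,v)g(\olxp,x_3,\vp)$ and the second becomes $-\tfrac{M(v)}{M(\vp)}f(\olxp,x_3,\vp)g(\olxp,x_3,\vp)$. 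After multiplying by $M(\vp)/M(v)$ factors appropriately and adding this back to the original, one obtains an expression manifestly symmetric in $(f,g)$, which proves $\langle\ave{\qb}f,g\rangle_{\ltxvm}=\langle f,\ave{\qb}g\rangle_{\ltxvm}$.

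The entropy inequality (statement 2) is then the standard symmetrization trick, applied with $g=f$. The same swap $(x,v)\leftrightarrow(\olxp,x_3,\vp)$ rewrites $\langle \ave{\qb}f,f/M\rangle_{\ltxvm}$ as the analogous integral with $(f,f/M)$ replaced by the primed versions and an overall sign change; averaging the two representations yields
\[
\intxv{\ave{\qb}f\;\tfrac{f}{M}}=-\tfrac{\oc^2}{2\tau}\intxv{\intolxpvp{{\cal S}\,[M(\vp)f-M(v)f']\Big[\tfrac{f}{M(v)}-\tfrac{f'}{M(\vp)}\Big]}},
\]
and the factorization $M(\vp)f-M(v)f'=M(v)M(\vp)\big[\tfrac{f}{M(v)}-\tfrac{f'}{M(\vp)}\big]$ turns the integrand into a square with the correct sign. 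The only point requiring minor care is the Fubini exchange, which is justified because $\sigma\leq S_0$ and $\chi$ is a probability density in $z$, so all terms are absolutely integrable whenever $f,g\in\ltxvm$; I expect this to be the routine but slightly delicate step.
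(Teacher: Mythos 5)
Your proposal is correct and follows essentially the same route as the paper: the loss part is bounded via the normalization $\int_{\R^2}\chi\,\md z=1$ and $\sigma\leq S_0$, the gain part via Cauchy--Schwarz against the measure $\oc^2{\cal S}M(\vp)\,\md\olxp\md\vp$ followed by Fubini, and both the symmetry and the sign of the dissipation come from the exchange $(\olx,v)\leftrightarrow(\olxp,\vp)$, under which ${\cal S}$ is invariant, followed by the standard symmetrization. The only cosmetic difference is that the paper does not spell out the two invariances of ${\cal S}$ (symmetry in $(r,v_3)\leftrightarrow(\rp,\vtp)$ and evenness in $z$) that you correctly identify as the reason the swap leaves the kernel unchanged.
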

\begin{proof}
1. The boundedness of the loss part follows easily since it is the multiplication by the bounded function (see Remark \ref{Normalize})
\begin{align*}
\frac{\oc ^2}{\tau} \intolxpvp{{\cal S}    ( |\olv|, v_3, |\olvp|, \vtp,z)M(\vp)} & = \frac{1}{\tau} \intvp{\!\int _{\R^2} {\cal S}( |\olv|, v_3, |\olvp|, \vtp, - z^\prime)M(\vp)\;\md z ^\prime\!\!}\\
& \leq \frac{S_0}{\tau}.
\end{align*}
For the gain part we use the inequalities
\begin{align*}
& \oc ^2 \left (\intolxpvp{{\cal S}  ( |\olv|, v_3, |\olvp|, \vtp,z)f (\olxp, x_3, \vp)    }\right )^2  \\
& \leq \intolxpvp{ {\cal S} \frac{f^2 (\olxp,x_3, \vp)}{M(\vp)} } \;\;\oc ^2 \intolxpvp{{\cal S} M(\vp)} \\
& \leq S_0 \intolxpvp{ {\cal S} \frac{f^2 (\olxp,x_3, \vp)}{M(\vp)} }.
\end{align*}
Thanks to Remark \ref{Normalize} we deduce, with $\ltm = \ltxvm{}$,  that 
\begin{align*}
& \left \|\frac{\oc ^2}{\tau} \intolxpvp{{\cal S} M(v) f(\olxp, x_3, \vp)}    \right \|^2 _\ltm  \leq \frac{\oc ^2}{\tau ^2} \intxv{M(v) S_0\\
& \times \intolxpvp{{\cal S} \frac{f^2 (\olxp,x_3, \vp)}{M(\vp)}}\!}\\
& = \frac{S_0}{\tau ^2} \intv{M(v) \int_{\R^3} \!\int _{\R^3} \frac{f^2 (\olxp, x_3, \vp)}{M(\vp)}\;\oc ^2 \int _{\R^2} {\cal S} \;\md x_1\md x_2 \md \vp \md x_1 ^\prime \md x_2 ^\prime \md x_3    }\\
& \leq \frac{S_0 ^2}{\tau ^2} \int _{\R^3} \!\int _{\R^3} \frac{f^2 (x_1 ^\prime, x_2 ^\prime, x_3, \vp)}{M(\vp)} \;\md \vp \md x_1 ^\prime \md x_2 ^\prime \md x_3 = \frac{S_0 ^2}{\tau ^2} \|f \| ^2 _{\ltm}.
\end{align*}
2. Interchanging $(\olxp, \vp)$ with $(\olx, v)$ and observing that this change leaves invariant ${\cal S}$, yield for any $f, g \in \ltxvm{}$
\begin{align*}
& ( \ave{\qb}f, g)_\ltm \\
&  = \frac{\oc ^2}{\tau} \intxv{\intolxpvp{{\cal S} M(v) M(\vp)\left [\frac{f(\olxp,x_3,\vp)}{M(\vp)} - \frac{f(x,v)}{M(v)}     \right ]}\frac{g(x,v)}{M(v)}}\\
&  = - \frac{\oc ^2}{\tau} \int_{\R^3}\!\int _{\R^3}{\intolxv{{\cal S} M(v) M(\vp)\left [\frac{f(\olxp,x_3,\vp)}{M(\vp)} - \frac{f(x,v)}{M(v)}     \right ]}} \\
& \times \frac{g(\olxp, x_3,\vp)}{M(\vp)}\md \vp \md x_1 ^\prime \md x_2 ^ \prime \md x_3\\
& = -\frac{\oc ^2}{2 \tau} \intxv{\intolxpvp{{\cal S} M(v) M(\vp)\left [\frac{f(\olxp,x_3,\vp)}{M(\vp)} - \frac{f(x,v)}{M(v)}     \right ]\left [\frac{g(\olxp,x_3,\vp)}{M(\vp)} - \frac{g(x,v)}{M(v)}     \right ]\\
& }\!}
\end{align*}
which justifies the symmetry of $\ave{\qb}$ and its negativity.
\end{proof}
\begin{remark}
\label{NonLocal} Contrary to $\qb$, the operator $\ave{\qb}$ is non local in space. The value of $\ave{\qb}f $ at the point $(x,v)$ depends on all the values of $f$ in the set 
\begin{align*}
A(x,v) & = \{ (x_1 ^\prime, x_2 ^\prime, x_3, \vp)\;:\; {\cal S} (|\olv|,v_3, |\olvp|, \vtp, (\oc \olx + \polv) - (\oc \olxp + \polvp)\;) >0\} \\
& = \{  (x_1 ^\prime, x_2 ^\prime, x_3, \vp)\;:\; |\;| \olv| - |\olvp| \;| < |(\oc \olx + \polv) - (\oc \olxp + \polvp)| < |\olv| + |\olvp|  \}.
\end{align*}
Observe that if we denote by $C_{x,v}$ the Larmor circle
\[
C_{x,v} = \{ (x_1 ^\prime, x_2 ^\prime, x_3)\;:\; |\;\oc \olxp - ( \oc \olx + \polv) \; | = |\olv|\;\}
\]
then we have
\[
C_{x,v} \times \{ \vp\;:\;\vp \in \R ^3\} \subset \overline{A(x,v)}
\]
where $\overline{X}$ stands for the adherence of $X$ in $\R^6$. In particular $ \{ x \} \times \R ^3  \subset \overline{A(x,v)}$.
\end{remark}
\begin{remark}
\label{GlobalMass} The gain/loss parts of $\ave{\qb}$ are bounded on $\loxv{}$ and for any $f \in \loxv{}$ we have the global balance of the mass $\intxv{\ave{\qb}f } = 0$.
Indeed, we have
\begin{align*}
\|\ave{\qb}^+ f \|_{L^1} & \leq \frac{\oc ^2}{\tau} \intxv{\intolxpvp{{\cal S}( |\olv|, v_3, |\olvp|, \vtp,z) M(v) |f (\olxp, x_3, \vp)|}\!} \\
& \leq \frac{S_0}{\tau} \int _{\R^3}\!\int _{\R ^3} \!\int _{\R^3} M(v) |f (\olxp, x_3, \vp)| \;\md \vp\md x_1 ^\prime \md x_2 ^\prime \md x_3 \md v \nonumber \\
& = \frac{S_0}{\tau} \|f \|_{L^1}
\end{align*}
and similarly 
\[
\|\ave{\qb} ^- f \|_{L^1} \leq \frac{S_0}{\tau} \|f \|_{L^1}.
\]
The global balance follows by interchanging $(\olxp, \vp) $ with $(\olx, v)$.
\end{remark}
Combining \eqref{Equ6}, Propositions \ref{AveTra}, \ref{RelBolAve} and \eqref{Equ37} yields the limit model in Theorem \ref{MainResultBol}.
\begin{proof} (of Theorem \ref{MainResultBol})
Clearly $0 \leq \fe \in \linftloxv{}$ and 
\[
\intxv{\fe (t,x,v)} = \intxv{\fin (x,v)},\;\;t \in \R_+, \eps >0.
\]
We consider a sequence $(\eps _k)_k \subset \R_+ ^\star$ converging to $0$ such that $\limk \fek = f$ weakly $\star$ in $\linftltxvm{}$. Using the weak formulation of \eqref{Equ3}, \eqref{Equ2} with test functions $\eta (t) \varphi (x,v)$, $\eta \in C^1 _c (\R_+), \varphi \in \cocxv{}$, we deduce, after multiplication by $\eps _k$ and letting $k \to \infty$, that the limit density satisfies the constraint 
\begin{equation}
\label{Equ39} {\cal T} f(t) = 0,\;\;t \in \R_+.
\end{equation}
Considering test functions like $\eta (t) \varphi (x,v)$ with $\eta \in C^1 _c (\R_+)$, $\varphi \in \cocxv{} \cap \ker {\cal T}$ one gets
\begin{align}
\label{Equ40}
\inttxv{\fek \{ \eta ^\prime \varphi + \eta v_3 \partial _{x_3} \varphi + \eta \frac{q}{m} E (x) \cdot \nabla _v \varphi \}} & + \intxv{\fin \eta (0) \varphi } \nonumber \\
& = - \inttxv{\eta \qb ( \fek) \varphi }.
\end{align}
The symmetry of $\qb$ cf. Proposition \ref{LinBolOpe} allows us to write 
\begin{align}
\label{Equ41}
\limk \inttxv{\eta & \qb (\fek)\varphi }  = \limk \inttxvm{\eta \fek \qb (\varphi M)} \nonumber \\
& = \inttxvm{\eta f \qb (\varphi M)}  = \inttxv{\eta \qb (f) \varphi }\nonumber \\
& = \inttxv{\eta \ave{\qb (f)}\varphi } = \inttxv{\eta \ave{\qb} (f) \varphi }
\end{align}
since $f(t) \in \ker {\cal T}, t \in \R_+$ and thus $\ave{\qb (f)} = \ave{\qb} (f)$. For the other terms in \eqref{Equ40} we obtain thanks to Proposition \ref{AveTra}, Remark \ref{AveTraCom}
\begin{align}
\label{Equ42}
\limk  \inttxv{\fek  ( \partial _t  + a \cdot \gradxv ) (\eta \varphi)  } & =  \inttxv{f ( \partial _t  + a \cdot \gradxv )(\eta \varphi ) } \nonumber \\
& = \inttxv{f ( \partial _t  + A \cdot \gradxv )( \eta \varphi ) } 
\end{align}
and
\begin{equation}
\label{Equ43} \intxv{\fin (x,v)\eta (0) \varphi (x,v)} = \intxv{\eta (0) \ave{\fin}(x,v) \varphi (x,v)}.
\end{equation}
Combining \eqref{Equ40}, \eqref{Equ41}, \eqref{Equ42}, \eqref{Equ43} yields for any smooth test function $\eta (t) \varphi (x,v)$ with $\varphi \in \ker {\cal T}$. 
\begin{align}
\label{Equ44} \inttxv{f ( \partial _t  + A \cdot \gradxv )( \eta \varphi ) } & + \intxv{\ave{\fin} \eta (0) \varphi (x,v)} \nonumber \\
& = - \inttxv{\ave{\qb} (f) \eta \varphi }.
\end{align}
By Remark \ref{AveTraCom} we know that $A \cdot \gradxv $ leaves invariant the subspace of zero average functions and therefore it is easily seen that \eqref{Equ44} is trivially satisfied for any test function $\eta (t) \psi (x,v)$, with $\psi \in \cocxv{} \cap \ker \ave{\cdot}$. Finally \eqref{Equ44} holds true for any smooth test function, saying that $f$ solves \eqref{Equ45}, \eqref{Equ46}. We are done if we prove the uniqueness for the solution of \eqref{Equ45}, \eqref{Equ46} (and in this case all the family $(\fe)_\eps$ will converge towards $f$, weakly $\star$ in $\linftltxvm{}$). Assume that $f$ solves \eqref{Equ45} with zero initial condition. By standard arguments one gets
\[
\partial _t |f| + \frac{\ave{\polE}}{B}\cdot \gradolx |f| + v_3 \partial _{x_3} |f| + \frac{q}{m} \ave{E_3} \partial _{v_3} |f| = \ave{\qb} (f) \;\sgn f
\]
implying that 
\[
\frac{\md }{\md t} \intxv{|f(t,x,v)| } = \intxv{\ave{\qb}(f)\; \sgn f(t,x,v)},\;\;t \in \R_+.
\]
Our conclusion comes by observing that 
\begin{align*}
& \intxv{ \ave{\qb} (f) \;\sgn f }  \\
& = \frac{\oc ^2 }{\tau} \intxv{\intolxpvp{{\cal S} \{M(v) f(t,\olxp, x_3, \vp) - M(\vp) f (t,x,v)     \}\sgn f (t,x,v)}\!} \\
& = \frac{\oc ^2 }{\tau} \intxv{\intolxpvp{{\cal S}M(\vp) \{ f(t,x,v) \sgn f(t,\olxp, x_3, \vp) - |f(t,x,v)| \}}}\leq 0.
\end{align*}
\end{proof}
\begin{remark}
\label{Propagation}
It is easily seen that the integro-differential operator in \eqref{Equ45} propagates the constraint ${\cal T} f = 0$. We are done if we prove that $f_s = f$ for any $s \in \R$, where $f_s (t,x,v) = f(t, X(s;x,v), V(s;x,v))$ and $(X,V)$ is the characteristic flow \eqref{Equ7}. A direct computation shows that ${\cal T}$ and $A \cdot \nabla _{x,v} = \frac{\ave{\polE}}{B} \cdot \nabla _\olx + v_3 \partial _{x_3} + \frac{q}{m} \ave{E_3} \partial _{v_3}$ commute, implying that
\[
A\cdot \nabla _{x,v} f_s = (A\cdot \nabla _{x,v} f )_s.
\]
Observe also that 
\[
\ave{\qb}^+ f_s = \ave{\qb} ^+ f = ( \ave{\qb} ^+ f)_s,\;\;\ave{\qb} ^- f_s = (\ave{\qb} ^- f )_s 
\]
and therefore $\ave{\qb}f_s = (\ave{\qb} f)_s$. Finally both $f, f_s$ satisfy \eqref{Equ45}, \eqref{Equ46} and our statement follows by the uniqueness that we have established before.

Clearly the transport equation \eqref{Equ45} can be written in the reduced phase space $(\oly = \olx + \frac{\polv}{\omega _c}, x_3, r = |\olv|, v_3)$ since, by the constraint ${\cal T} f = 0$, we know that $f(t,x,v) = g(t, \oly, x_3, r, v_3)$. We obtain
\begin{align*}
\partial _t g + \frac{\ave{\polE}}{B} \cdot \nabla _\oly g + v_3 \partial _{x_3} g & + \frac{q}{m} \ave{E_3} \partial _{v_3} g  = 2\pi \frac{\omega _c ^2}{\tau} \int _{\R^2} \int _\R \int _{\R_+} {\cal S} (r, v_3, \rp, \vtp, \omega _c (\oly - \olyp)) \\
& \times \{ M \; g(t, \olyp, x_3, \rp, \vtp) - M^\prime \;g (t, \oly, x_3, r, v_3)\}\;\rp \md \rp \md \vtp \md \olyp 
\end{align*}
where
\[
M = \frac{1}{(2\pi \theta/m)^{3/2}} e ^ {- \frac{m}{2\theta} ( r^2 + (v_3)^2)},\;\;M^\prime = \frac{1}{(2\pi \theta/m)^{3/2}} e ^ {- \frac{m}{2\theta} ( (\rp) ^2 + (\vtp)^2)}.
\]
\end{remark}
\begin{remark}
\label{ConfinPot}
The family $(\fe )_{0 < \eps \leq 1}$ remains bounded in $\linftltxvm{}$ for potentials of the form $\phi (x) = \overline{\phi}(\olx) + \phi _3 (x_3)$. Indeed, in this case observe that the energy function $W^\eps (x,v)  : = \frac{m|v|^2}{2} + q ( \eps \;\overline{\phi}(\olx) + \phi _3 (x_3))$ satisfies
\[
\partial _t W^\eps + \frac{1}{\eps} ( \olv \cdot \gradolx + \oc \polv \cdot \gradolv ) W^\eps + v_3 \partial _{x_3} W^\eps + \frac{q}{m} E(x) \cdot \nabla _v W^\eps = 0
\]
and therefore one gets
\begin{align*}
\left \{\partial _t  + \frac{1}{\eps} ( \olv \cdot \gradolx + \oc \polv \cdot \gradolv )  + v_3 \partial _{x_3}   + \frac{q}{m} E(x) \cdot \nabla _v  \right \}& \left ( \frac{(\fe ) ^2 }{2 M(v) \exp( {- \frac{q}{\theta} [ \eps \; \overline{\phi} ( \olx) + \phi _3( x_3)]})}\right ) \\
& = \frac{\qb (\fe) \fe }{ M(v) \exp( {- \frac{q}{\theta} [\eps \;\overline{\phi} ( \olx) + \phi _3( x_3)]})}.
\end{align*}
Integrating with respect to $(x,v)$ yields the bound
\[
\intxv{\frac{(\fe (t,x,v) )^2 }{ M(v) \exp( {- \frac{q}{\theta}[\eps \;\overline{\phi} ( \olx) + \phi _3( x_3)] })}}\leq \intxv{\frac{(\fin (x,v) )^2 }{ M(v) \exp( {- \frac{q}{\theta} [\eps \;\overline{\phi} ( \olx) + \phi _3( x_3)]})}}
\]
implying the uniform estimate
\[
\|\fe \|_{\linftltxvm{}} \leq \exp \left ( \frac{|q|\;(\|\overline{\phi}\|_{L^\infty} + \|\phi _3 \|_{L^\infty})}{\theta}  \right ) \|\fin \|_{\ltxvm{}},\;0 < \eps \leq 1.
\]
\end{remark}

\section{The \fpl operator}
\label{fpl}

In this section we focus on the \fpl equation \cite{CerIllPul, DevVilOne, DevVilTwo, Dev04}. The rate of change of the density $\fs$, corresponding to a population of charged particles of specie $s$, due to collisions with charge particles of specie $s^\prime$ writes
\begin{align*}
\qfpl (\fs, \fsp) & =  \frac{1}{m_s}  \divv \intvp{\musi \\
& \times S(v-\vp) \left (
\frac{1}{m_s} \fsp (\vp) (\nabla _v \fs )(v) - \frac{1}{m_{s^\prime}} \fs (v) (\nabla _{\vp} \fsp )(\vp)
\right )}
\end{align*}
where $\mu _{s s ^\prime} = \frac{m_s m_{s^\prime}}{m_s + m_{s^\prime}}$ is the reduced mass of the pair $\{m_s, m_{s^\prime}\}$, $\sigma _{s s^\prime} = \sigma _{s^\prime s}>0$ is the scattering cross section between species $\{s, s ^\prime\}$ and the matrix $S(w) = \imww$ corresponds to the orthogonal projection on the plane orthogonal to $w$. As the electron mass is much smaller than the ion mass, we consider only the collisions between ions, whose distribution function is denoted by $\fe$ and satisfies
\begin{equation}
\label{EquPert}
\partial _t \fe + \frac{1}{\eps} ( \olv \cdot \gradolx + \oc \polv \cdot \gradolv ) \fe + v _3 \partial _{x_3} \fe + \frac{q}{m} E \cdot \nabla _v \fe = \qfpl (\fe, \fe),\;(t,x,v) \in \R_+ \times \R ^3 \times \R ^3
\end{equation}
where $q >0$ is the ion charge and $m$ is the ion mass. As in the relaxation case, the limit model comes by averaging the collision kernel $\qfpl$. The treatment of the \fpl kernel is much elaborated. Therefore we content ourselves of formal computations. 
The main properties of the \fpl operator are summarized below
\begin{pro}
\label{ProLFP} Consider the \fpl kernel between ions
\[
\qfpl (f, f)  =  \frac{1}{4}  \divv \intvp{ \sigma _{ii} (|v - \vp|) |v - \vp |^3  S(v-\vp) \left (
 f (\vp) (\nabla _v f )(v) - f (v) (\nabla _{\vp} f )(\vp)
\right )}.
\]
Then the mass, momentum and kinetic energy balances hold true
\[
\intv{m \qfpl (f,f)} = 0,\;\;\intv{m v \qfpl (f,f) } = 0,\;\;\intv{m \frac{|v|^2}{2} \qfpl (f,f) } = 0.
\]
Moreover the entropy production $D:= - \intv{(1 + \ln f ) \qfpl (f,f)}$ is non negative
\[
D = \frac{1}{8} \intvvp{\sigma (|v - \vp|) |v- \vp| f(v) f(\vp) | (v - \vp) \wedge ( \nabla _v \ln f(v) - \nabla _{\vp} \ln f (\vp) ) |^2 } \geq 0.
\]
\end{pro}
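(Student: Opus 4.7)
The plan is to derive a single symmetric weak formulation that covers all four assertions at once, then specialize the test function. Writing $Q_{FPL}(f,f) = \frac{1}{4}\divv J[f]$ with
\[
J[f](v) = \intvp{\sigma_{ii}(|v-\vp|)|v-\vp|^3 S(v-\vp)\bigl(f(\vp)\nabla_v f(v) - f(v)\nabla_{\vp} f(\vp)\bigr)},
\]
I would first compute, for a smooth test function $\psi(v)$, the integral $\intv{\psi\, Q_{FPL}(f,f)}$ by integration by parts in $v$, producing
\[
\intv{\psi\, Q_{FPL}(f,f)} = -\frac{1}{4}\intvvp{\sigma|v-\vp|^3 \bigl(S(v-\vp)\nabla_v\psi(v)\bigr)\cdot\bigl(f(\vp)\nabla_v f(v) - f(v)\nabla_{\vp} f(\vp)\bigr)}.
\]

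Next, I would exploit the symmetries $S(w)=S(-w)$, $S=S^\top$ and the invariance of $\sigma(|v-\vp|)|v-\vp|^3$ under the swap $v\leftrightarrow \vp$: renaming variables and averaging yields the manifestly symmetric identity
\[
\intv{\psi\, Q_{FPL}(f,f)} = -\frac{1}{8}\intvvp{\sigma|v-\vp|^3 \bigl(\nabla_v\psi(v)-\nabla_{\vp}\psi(\vp)\bigr)\cdot S(v-\vp)\bigl(f(\vp)\nabla_v f(v) - f(v)\nabla_{\vp} f(\vp)\bigr)}.
\]
This is the workhorse identity and, once in hand, every conclusion is immediate.

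I would then read off the three conservation laws by substituting collision invariants. For $\psi=1$ both gradients vanish and mass is conserved. For $\psi(v)=v_k$ one has $\nabla_v\psi(v)-\nabla_{\vp}\psi(\vp)=e_k-e_k=0$, giving momentum balance. For $\psi(v)=\frac{|v|^2}{2}$ the gradient difference equals $v-\vp$, and since $S(v-\vp)(v-\vp)=0$ by definition of the orthogonal projection, kinetic energy is conserved.

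Finally, for the entropy production I would take $\psi=1+\ln f$, so that
\[
\nabla_v\psi(v)-\nabla_{\vp}\psi(\vp)=\nabla_v\ln f(v)-\nabla_{\vp}\ln f(\vp),\qquad f(\vp)\nabla_v f(v) - f(v)\nabla_{\vp} f(\vp) = f(v)f(\vp)\bigl(\nabla_v\ln f(v)-\nabla_{\vp}\ln f(\vp)\bigr).
\]
Plugging into the symmetric identity yields
\[
D = \frac{1}{8}\intvvp{\sigma|v-\vp|^3 f(v)f(\vp)\,\eta\cdot S(v-\vp)\eta},\qquad \eta:=\nabla_v\ln f(v)-\nabla_{\vp}\ln f(\vp).
\]
The last step is to rewrite $\eta\cdot S(w)\eta$ in the form stated: since $S(w)$ is the orthogonal projector onto $w^\perp$ in $\R^3$, one has $\eta\cdot S(w)\eta = |\eta|^2-(\eta\cdot w)^2/|w|^2=|w\wedge\eta|^2/|w|^2$, which after absorbing the factor $|v-\vp|^2$ into $|v-\vp|^3$ gives exactly the claimed expression with $\sigma|v-\vp|$ and shows $D\ge 0$.

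There is no real obstacle here; the only point demanding care is the symmetrization step, where one must justify the swap $v\leftrightarrow \vp$ (which requires enough decay/smoothness of $f$ to use Fubini and discard boundary terms in the integration by parts) and track the sign changes coming from $\nabla_v\leftrightarrow\nabla_{\vp}$. All of this is classical for the Landau operator, and since the proposition is stated formally, I would invoke sufficient regularity and decay on $f$ throughout without dwelling on it.
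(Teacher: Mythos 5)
Your proof is correct and follows essentially the same route as the paper, which simply notes that everything "comes easily by integration by parts, observing that $A_{ii}(v,\vp)+A_{ii}(\vp,v)=0$ and $S(v-\vp)(v-\vp)=0$" — exactly the symmetrized weak formulation you derive and then specialize to $\psi=1,\,v_k,\,|v|^2/2,\,1+\ln f$. Your version just spells out the one-line argument the authors leave to the reader.
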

\begin{proof}
All statements come easily by integration by parts, observing that $A_{ii} (v,\vp) + A_{ii} (\vp,v) = 0$, where $A_{ii} (v,\vp) = \sigma _{ii} (|v - \vp|) |v - \vp |^3 S(v - \vp) ( f (\vp) \nabla _v f (v) - f (v) \nabla _{\vp} f (\vp))$ and $S(v - \vp) (v-\vp) = 0$.
\end{proof}
With the notation $\sigma (|v - \vp|) = \frac{1}{4}\sigma _{ii}(|v - \vp|) |v - \vp |^3$ the collision kernel becomes
\[
\qfpl (f, f) (v) =    \divv \intvp{ \sigma  (|v - \vp|) 
S(v-\vp) \left (f(\vp)  (\nabla _v f )(v) -  f (v) (\nabla _{\vp} f )(\vp)\right )}.
\]

\subsection{Preliminary computations}
The \fpl operator combines convolution and differential operators in $v$. Therefore its average can be determined by studying the commutation properties between convolution and derivation with respect to the average. First we apply the commutation formula between divergence and average. Next we are looking for commutation between convolution and average. It is convenient to split $\qfpl$ into its gain and loss parts $\qfpl ^\pm$. We introduce the following notations, for any function $g$ and vectors $w_1, w_2$
\[
\ave{g}_{\sigma S} := \ave{\intvp{g(x, \vp) \sigma (|v - \vp|) S(v- \vp)}}
\]
\[
\ave{g, w_1}_{\sigma S} := \ave{\intvp{g(x, \vp) \sigma (|v - \vp|) S(v- \vp)w_1}}
\]
\[
\ave{g, w_1, w_2}_{\sigma S} := \ave{\intvp{g(x, \vp) \sigma (|v - \vp|) S(v- \vp): w_1\otimes w_2}}.
\]
Let us establish some useful formulae based on Proposition \ref{FirstFormula}. For any orthogonal matrix ${\cal O} \in {\cal M}_3 (\R)$ we consider the application $(v,\vp) \to S(^t {\cal O} v - {^t {\cal O}} \vp)$. It is easily seen that 
\[
S(^t {\cal O} v - {^t {\cal O}} \vp) = {^t {\cal O}} S(v - \vp) {\cal O}.
\]
Notice also that for any orthogonal matrix ${\cal O} \in {\cal M}_3 (\R)$ such that ${\cal O} e_3 = e_3$ we have $( \overline{\tO v}, 0) = \tO (\olv,0)$ and $({^\perp \overline{\tO v}}, 0) = \tO (\polv,0)$ for any $v \in \R^3$. 
\begin{lemma}
The following applications are left invariant by any rotation around $e_3$, that is they satisfy \eqref{Equ32}
\[
S(v-\vp) : (\olv,0) \otimes (\olv,0),\;\;S(v-\vp) : (\olv,0) \otimes (\polv,0),\;\;S(v-\vp) : (\polv,0) \otimes (\polv,0)
\]
\[
S : (\olvp,0) \otimes (\olv,0),\;S : (\olvp,0) \otimes (\polv,0),\;S : (\polvp,0) \otimes (\olv,0),\;S : (\polvp,0) \otimes (\polv,0).
\]
\end{lemma}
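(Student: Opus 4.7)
The plan is to verify the invariance \eqref{Equ32} for each of the seven applications by direct substitution, relying on the two identities displayed just above the lemma statement: namely $S({}^t{\cal O}v - {}^t{\cal O}v^\prime) = {}^t{\cal O}\,S(v-v^\prime)\,{\cal O}$ for any orthogonal ${\cal O}$, and the fact that when ${\cal O}e_3 = e_3$ one has $(\overline{{}^t{\cal O}v},0) = {}^t{\cal O}(\olv,0)$ and $({}^\perp\overline{{}^t{\cal O}v},0) = {}^t{\cal O}(\polv,0)$ (and similarly with $v^\prime$).

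Concretely, I would treat one representative case in detail, say $C(v,v^\prime) = S(v-v^\prime):(\olv,0)\otimes(\olv,0)$, and then indicate that all remaining cases follow by identical algebra. Writing $A:u\otimes w = u^\top A w$, I compute
\[
C({}^t{\cal O}v,{}^t{\cal O}v^\prime)
= \bigl({}^t{\cal O}(\olv,0)\bigr)^\top\,{}^t{\cal O}\,S(v-v^\prime)\,{\cal O}\,\bigl({}^t{\cal O}(\olv,0)\bigr)
= (\olv,0)^\top S(v-v^\prime)(\olv,0),
\]
using ${\cal O}\,{}^t{\cal O} = I$ twice in the middle. This is exactly $C(v,v^\prime)$, which proves \eqref{Equ32} for this entry.

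The six remaining entries are structurally identical: in each one the matrix $S(v-v^\prime)$ is contracted on the left with some $(\olu,0)$ or $(\polu,0)$ and on the right with some $(\olw,0)$ or $(\polw,0)$, where $u,w \in \{v,v^\prime\}$. For each such pair, the transformation rule $(\overline{{}^t{\cal O}u},0) = {}^t{\cal O}(\olu,0)$ and the analogous one for $\polu$ cause two factors of ${\cal O}\,{}^t{\cal O} = I$ to appear around $S(v-v^\prime)$, so the combined quantity is invariant. No case presents any additional difficulty, because the perpendicular operation $u \mapsto \polu$ in the plane commutes with rotations of the plane around $e_3$ (both are block-diagonal rotations in $\R^2 \times \R$), which is what underlies the identity $({}^\perp\overline{{}^t{\cal O}v},0) = {}^t{\cal O}(\polv,0)$.

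There is really no obstacle here: the lemma is a bookkeeping statement that packages the invariance properties needed later so that Proposition~\ref{FirstFormula} can be applied to each scalar ingredient of the Fokker-Planck-Landau kernel. The only mild point to watch is that the proof genuinely requires ${\cal O}e_3 = e_3$ — a general orthogonal ${\cal O}$ would not preserve the decomposition $v = (\olv,v_3)$, and then neither the bar nor the perpendicular operations would transform covariantly. Since the lemma only claims invariance under rotations around $e_3$, this restriction is exactly what is available.
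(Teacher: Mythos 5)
Your proposal is correct and follows exactly the paper's own argument: one representative case is computed using the identities $S({}^t{\cal O}v - {}^t{\cal O}\vp) = {}^t{\cal O}\,S(v-\vp)\,{\cal O}$ and $(\overline{{}^t{\cal O}v},0) = {}^t{\cal O}(\olv,0)$, with the factors ${\cal O}\,{}^t{\cal O}=I$ cancelling in the contraction, and the remaining cases are noted to follow similarly. No discrepancy with the paper's proof.
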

\begin{proof}
For any rotation ${\cal O}$ around $e_3$ we have
\begin{align*}
S(\tO v - \tO \vp) : (\overline{\tO v},0) \otimes (\overline{\tO v},0) & = \tO S(v - \vp) {\cal O} : \tO (\olv,0) \otimes \tO (\olv,0) \\
& = 
S(v-\vp) : (\olv,0) \otimes (\olv,0).
\end{align*}
The other invariances follow similarly.
\end{proof}
Therefore the formula in Proposition \ref{FirstFormula} applies to all previous functions and we obtain
\begin{pro}
\label{AveSca} For any $z \in \R ^2$ such that $|r - \rp| < |z| < r + \rp$ we denote by $\varphi \in (0,2\pi)$ the angle satisfying $r^2 + (\rp )^2 - 2 r \rp \cos \varphi = |z|^2$. Then for any function $f \in \ker {\cal T}$ we have, with the notation $z = (\oc \olx + \polv ) - (\oc \olxp + \polvp)$
\begin{enumerate}
\item 
\begin{align*}
\ave{f, ( \olv,0), (\olv, 0)}_{\sigma S} & = \oc ^2 \intolxpvp{\sigma( \sqrt{ |z|^2 + (v_3 - \vtp)^2 }\;) f(\olxp, x_3, \vp) \chi (|\olv|, |\olvp|, z) \\
& \times 
\left \{ r^2 - \frac{r^2 ( r - \rp \cos \varphi )^2}{|z|^2 + (v_3 - \vtp)^2}  \right \}}
\end{align*}
\item 
\begin{align*}
\ave{f, ( \olv,0), (\polv, 0)}_{\sigma S} = \ave{f, ( \polv,0), (\olv, 0)}_{\sigma S} = 0
\end{align*}
\item
\begin{align*}
\ave{f, ( \polv,0), (\polv, 0)}_{\sigma S} & = \oc ^2 \intolxpvp{\sigma( \sqrt{ |z|^2 + (v_3 - \vtp)^2 }\;) f(\olxp, x_3, \vp) \chi (|\olv|, |\olvp|, z) \\
& \times 
\left \{ r^2 - \frac{r^2 (\rp)^2 \sin ^2 \varphi }{|z|^2 + (v_3 - \vtp)^2}  \right \}}
\end{align*}
\item
\begin{align*}
\ave{f, ( \olvp,0), (\olv, 0)}_{\sigma S} & = \oc ^2 \intolxpvp{\sigma( \sqrt{ |z|^2 + (v_3 - \vtp)^2 }\;) f(\olxp, x_3, \vp) \chi (|\olv|, |\olvp|, z) \\
& \times 
\left \{ r \rp \cos \varphi  - \frac{r \rp  (r \cos \varphi - \rp)(r - \rp \cos \varphi )  }{|z|^2 + (v_3 - \vtp)^2}  \right \}}
\end{align*}
\item 
\begin{align*}
\ave{f, ( \olvp,0), (\polv, 0)}_{\sigma S} = \ave{f, ( \polvp,0), (\olv, 0)}_{\sigma S} = 0
\end{align*}
\item
\begin{align*}
\ave{f, ( \polvp,0), (\polv, 0)}_{\sigma S} & = \oc ^2 \intolxpvp{\sigma( \sqrt{ |z|^2 + (v_3 - \vtp)^2 }\;) f(\olxp, x_3, \vp) \chi (|\olv|, |\olvp|, z) \\
& \times 
\left \{ r \rp \cos \varphi  - \frac{r^2(\rp)^2  \sin ^2 \varphi   }{|z|^2 + (v_3 - \vtp)^2}  \right \}}.
\end{align*}
\end{enumerate}
\end{pro}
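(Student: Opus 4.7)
The plan is to apply Proposition \ref{FirstFormula} to each of the six integrands in turn. The preceding lemma already verifies that the six functions
\[
C(v,\vp) = \sigma(|v-\vp|)\, S(v-\vp) : w_1(v,\vp)\otimes w_2(v,\vp),
\]
with $w_1,w_2 \in \{(\olv,0),(\polv,0),(\olvp,0),(\polvp,0)\}$, satisfy the invariance hypothesis \eqref{Equ32}, so they each fall under the scope of Proposition \ref{FirstFormula}. The task reduces, for each case, to identifying $\tilde C(r,v_3,\rp,\vtp,\varphi)$ and then symmetrizing in $\varphi$.

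First I would write down the pointwise identity
\[
S(v-\vp):a\otimes b = a\cdot b - \frac{\bigl((v-\vp)\cdot a\bigr)\bigl((v-\vp)\cdot b\bigr)}{|v-\vp|^2},
\]
and use the parametrization $\olv = r\,e^{i\alpha}$, $\olvp = \rp\,e^{i(\alpha+\varphi)}$ (as in the proof of Proposition \ref{FirstFormula}) to tabulate the scalar products
\[
\olv\cdot\olvp = r\rp\cos\varphi,\quad \polv\cdot\polvp = r\rp\cos\varphi,\quad \olv\cdot\polvp = r\rp\sin\varphi,\quad \olvp\cdot\polv = -r\rp\sin\varphi,
\]
together with $|\olv|^2=|\polv|^2=r^2$, $\olv\cdot\polv=0$, and similarly for the primed variables. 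Combined with $|v-\vp|^2 = |z|^2 + (v_3-\vtp)^2$ for $|z|^2 = r^2 + (\rp)^2 - 2r\rp\cos\varphi$, this directly produces $\tilde C(r,v_3,\rp,\vtp,\varphi)$ as the product of $\sigma\bigl(\sqrt{|z|^2+(v_3-\vtp)^2}\bigr)$ with the bracket appearing in each stated formula.

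The parity in $\varphi$ then separates the cases. For items 2 and 5, each of the two scalar products $w_1\cdot(v-\vp)$ and $w_2\cdot(v-\vp)$ contributes either an even or odd function of $\varphi$, and the combination turns out to contain exactly one factor $\sin\varphi$ (with $\sigma$ and $|z|^2$ even in $\varphi$), so $\tilde C$ is odd; by Remark \ref{EvenOdd} the averaged kernel vanishes. For items 1, 3, 4, 6, the expression $\tilde C$ is already even in $\varphi$, so the symmetrization $(\tilde C(\varphi)+\tilde C(-\varphi))/2$ reduces to $\tilde C(\varphi)$ itself, and Proposition \ref{FirstFormula} produces ${\cal C} = \tilde C\cdot \chi(r,\rp,z)$, which is exactly the stated formula.

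The main obstacle is purely bookkeeping: keeping track of the signs in the cross terms, especially distinguishing $\olv\cdot\polvp = r\rp\sin\varphi$ from $\olvp\cdot\polv = -r\rp\sin\varphi$, both of which enter case 5; once this is in place each of the six formulas is a single line of algebra and no further analytic input is needed beyond Proposition \ref{FirstFormula} and Remark \ref{EvenOdd}.
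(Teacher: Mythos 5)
Your proposal follows exactly the paper's own route: invoke the preceding lemma for the rotational invariance \eqref{Equ32}, apply Proposition \ref{FirstFormula} to each of the six kernels, compute $\tC$ from the projection identity $S(v-\vp):a\otimes b = a\cdot b - |v-\vp|^{-2}((v-\vp)\cdot a)((v-\vp)\cdot b)$ with the scalar products you tabulate (which are all correct, including the sign asymmetry $\olv\cdot\polvp = -\,\olvp\cdot\polv$), and conclude by parity in $\varphi$, with Remark \ref{EvenOdd} killing cases 2 and 5. This is correct and essentially identical to the paper's proof.
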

\begin{proof}
We need to compute the functions $\tC, {\cal C}$ defined in Proposition \ref{FirstFormula}. In each case we have\\
1.
\[
C(v,\vp) = \sigma (|v - \vp|)S(v - \vp) : (\olv, 0) \otimes (\olv, 0) = \sigma (|v - \vp|)\left \{ |\olv|^2 - \frac{[(\olv - \olvp) \cdot \olv ]^2}{|v - \vp|^2}   \right \}
\]
\[
\tC (r, v_3, \rp, \vtp, \varphi ) = \sigma( \sqrt{r ^2 + (\rp )^2 - 2 r \rp \cos \varphi + (v_3 - \vtp)^2}\;)\left \{ r^2 - \frac{ ( r^2 - r \rp \cos \varphi )^2}{|z|^2 + (v_3 - \vtp)^2}  \right \}
\]
\[
{\cal C} (r, v_3, \rp, \vtp, z) = \chi (r, \rp, z) \sigma(\sqrt{|z|^2 + (v_3 - \vtp)^2}\;)\left \{ r^2 - \frac{ r^2( r -  \rp \cos \varphi )^2}{|z|^2 + (v_3 - \vtp)^2}  \right \}.
\]
2.
\begin{align*}
C(v,\vp) & = \sigma (|v - \vp|)S(v - \vp) : (\olv, 0) \otimes (\polv, 0) \\
& = - \sigma (|v - \vp|) \frac{[(\olv - \olvp) \cdot \olv ]\;[(\olv - \olvp) \cdot \polv]}{|v - \vp|^2}   
\end{align*}
\[
\tC (r, v_3, \rp, \vtp, \varphi ) = - \sigma( \sqrt{|z|^2+ (v_3 - \vtp)^2}\;)\frac{ ( r^2 - r \rp \cos \varphi ) r \rp \sin \varphi }{|z|^2 + (v_3 - \vtp)^2}  
\]
\[
{\cal C} = 0.
\]
3.
\[
C(v,\vp) = \sigma (|v - \vp|)S(v - \vp) : (\polv, 0) \otimes (\polv, 0) = \sigma (|v - \vp|)\left \{ |\olv|^2 - \frac{(\olvp \cdot \polv )^2}{|v - \vp|^2}   \right \}
\]
\[
\tC (r, v_3, \rp, \vtp, \varphi ) = \sigma( \sqrt{|z|^2 + (v_3 - \vtp)^2}\;)\left \{ r^2 - \frac{ r^2 (\rp)^2  \sin ^2 \varphi }{|z|^2 + (v_3 - \vtp)^2}  \right \}
\]
\[
{\cal C} (r, v_3, \rp, \vtp, z) = \chi (r, \rp, z) \sigma(\sqrt{|z|^2 + (v_3 - \vtp)^2}\;)\left \{ r^2 - \frac{ r^2(\rp)^2 \sin ^2 \varphi }{|z|^2 + (v_3 - \vtp)^2}  \right \}.
\]
4.
\begin{align*}
C(v,\vp) & = \sigma (|v - \vp|)S(v - \vp) : (\olvp, 0) \otimes (\olv, 0) \\
& =   \sigma (|v - \vp|) \left \{ \olvp \cdot \olv - \frac{[(\olv - \olvp) \cdot \olvp ]\;[(\olv - \olvp) \cdot \olv]}{|v - \vp|^2}\right \}   
\end{align*}
\[
\tC (r, v_3, \rp, \vtp, \varphi ) =  \sigma\left \{ r \rp \cos \varphi -  \frac{ ( r\rp \cos \varphi  - (\rp )^2 )( r^2 -  r \rp \cos \varphi ) }{|z|^2 + (v_3 - \vtp)^2} \right \} 
\]
\[
{\cal C} (r, v_3, \rp, \vtp, z) = \chi (r, \rp, z) \sigma \left \{ r \rp \cos \varphi -  \frac{ r \rp ( r \cos \varphi  - \rp  )( r -   \rp \cos \varphi ) }{|z|^2 + (v_3 - \vtp)^2} \right \}.
\]
5.
\begin{align*}
C(v,\vp) & = \sigma (|v - \vp|)S(v - \vp) : (\olvp, 0) \otimes (\polv, 0) \\
& =   \sigma (|v - \vp|) \left \{ \olvp \cdot \polv - \frac{[(\olv - \olvp) \cdot \olvp ]\;[(\olv - \olvp) \cdot \polv]}{|v - \vp|^2}\right \}   
\end{align*}
\[
\tC (r, v_3, \rp, \vtp, \varphi ) =  \sigma( \sqrt{|z|^2+ (v_3 - \vtp)^2}\;)\left \{ -r \rp \sin \varphi -  \frac{ ( r\rp \cos \varphi  - (\rp )^2 )r \rp \sin \varphi  }{|z|^2 + (v_3 - \vtp)^2} \right \} 
\]
\[
{\cal C} (r, v_3, \rp, \vtp, z) = 0.
\]
6.
\begin{align*}
C(v,\vp) & = \sigma (|v - \vp|)S(v - \vp) : (\polvp, 0) \otimes (\polv, 0) \\
& =   \sigma (|v - \vp|) \left \{ \olv \cdot \olvp - \frac{[(\olv - \olvp) \cdot \polvp ]\;[(\olv - \olvp) \cdot \polv]}{|v - \vp|^2}\right \}   
\end{align*}
\[
\tC (r, v_3, \rp, \vtp, \varphi ) =  \sigma( \sqrt{|z|^2+ (v_3 - \vtp)^2}\;)\left \{ r \rp \cos \varphi -  \frac{ r^2 (\rp)^2  \sin ^2 \varphi  }{|z|^2 + (v_3 - \vtp)^2} \right \}
\]
\[
{\cal C} (r, v_3, \rp, \vtp, z) = \chi (r, \rp, z) \sigma(\sqrt{|z|^2 + (v_3 - \vtp)^2}\;)\left \{ r \rp \cos \varphi -  \frac{ r^2 (\rp)^2  \sin ^2 \varphi  }{|z|^2 + (v_3 - \vtp)^2} \right \}.
\]
\end{proof}
We also need to compute the averages
\begin{equation}
\label{FourAve}
\ave{f, (\olv, 0)}_{\sigma S},\;\;\ave{f, (\polv, 0)}_{\sigma S},\;\;\ave{f, (\olvp, 0)}_{\sigma S},\;\;\ave{f, (\polvp, 0)}_{\sigma S}.
\end{equation}
Notice that the functions $\sigma S (v - \vp) (\olv,0), \sigma S (v - \vp)(\polv,0), \sigma S (v - \vp)(\olvp,0), \sigma S (v - \vp)(\polvp,0)$ writes
\begin{align*}
D(v,\vp) = ( \tD \;\olv + \tDp \;\olvp, \tDt\;v_3 + \tDtp \;\vtp)
\end{align*}
for some scalar functions $\tD, \tDp, \tDt, \tDtp$ depending on $|\olv|, v_3, |\olvp|, \vtp$ and $\varphi$, the angle between $(\olv,0), (\olvp,0)$. Performing the same steps as in the proof of Proposition \ref{FirstFormula} we obtain (see Appendix \ref{A} for proof details)
\begin{pro}
\label{SecondFormula} Consider $\tD = \tD (|\olv|, v_3, |\olvp|, \vtp, \varphi), \tDp = \tDp (|\olv|, v_3, |\olvp|, \vtp, \varphi)$ two functions and
\[
D(v,\vp) = \tD \;\olv + \tDp \;\olvp,\;\;D_3(v,\vp) = \tD \;v_3 + \tDp \;\vtp.
\]
Then for any $f \in \ker {\cal T}$ we have
\begin{enumerate}
\item 
\begin{align*}
\ave{\intvp{D(v,\vp) f(x, \vp)}} & = \oc ^2 \intolxpvp{{\cal D}(|\olv|, v_3, |\olvp|, \vtp, \z)\\
& \times f(\olxp, x_3, \vp)}
\end{align*}
where 
\begin{align*}
{\cal D}(r, v_3, \rp, \vtp, z) & = \frac{
\left (
\begin{array}{rrr}
z_2  &  z_1\\
-z_1  &\;\;\;   z_2
\end{array}
\right )
}{2|z|} [\tD(r, v_3, \rp, \vtp, \varphi) r e ^{-i\psi} + \tD (r, v_3, \rp, \vtp, -\varphi) r e ^{i\psi}   \\
& + \tDp(r, v_3, \rp, \vtp, \varphi) \rp e ^{i(\varphi - \psi)} + \tDp (r, v_3, \rp, \vtp, -\varphi) \rp e ^{-i(\varphi - \psi)} 
]\\
& \times \chi (r, \rp, z).
\end{align*}
\item 
\begin{align*}
\ave{\intvp{D_3(v,\vp) f(x, \vp)}} & = \oc ^2 \intolxpvp{{\cal D}_3(|\olv|, v_3, |\olvp|, \vtp, \z)\\
& \times f(\olxp, x_3, \vp)}
\end{align*}
where 
\begin{align*}
{\cal D}_3(r, v_3, \rp, \vtp, z) & = \frac{1
}{2} [(\tD(r, v_3, \rp, \vtp, \varphi)  + \tD (r, v_3, \rp, \vtp, -\varphi) )v_3   \\
& + (\tDp(r, v_3, \rp, \vtp, \varphi)  + \tDp (r, v_3, \rp, \vtp, -\varphi) ) \vtp
] \chi (r, \rp, z).
\end{align*}
\end{enumerate}
The angles $\varphi, \psi \in (0,\pi)$ are such that $|z|^2 = r^2 + (\rp)^2 - 2 r \rp \cos \varphi $, $(\rp)^2 = r^2 + |z|^2 + 2 r |z| \cos \psi$.
\end{pro}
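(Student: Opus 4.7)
The plan is to adapt the argument of Proposition~\ref{FirstFormula} to the integrands $D$ and $D_3$. The scalar case (ii) is essentially immediate: since $D_3(v,\vp) = \tD v_3 + \tDp \vtp$ depends only on $|\olv|, v_3, |\olvp|, \vtp$ and the relative angle $\varphi$, it satisfies the rotation invariance hypothesis \eqref{Equ32}. One can therefore apply Proposition~\ref{FirstFormula} componentwise with $\tilde C = \tD v_3 + \tDp \vtp$; the symmetrisation $\tilde C(\varphi) + \tilde C(-\varphi)$ immediately yields the announced expression for ${\cal D}_3$.

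For the vector case (i) I would repeat the chain of changes of variables used in Proposition~\ref{FirstFormula}: parametrise $\olv = r\,e^{i\alpha}$ and $\vp = (\rp\,e^{i(\varphi+\alpha)}, \vtp)$, use $f \in \ker {\cal T}$ to rewrite $f(x,\vp)$ as a function $g$ of the flow invariants, apply the substitution $\rp e^{i(\varphi+\alpha)} - r e^{i\alpha} = l(\varphi)\,e^{i(\psi(\varphi)+\alpha)}$, shift $\alpha \to \alpha - \psi$ in the outer integration (to strip the $e^{i\psi}$ from the argument of $g$), trade $\varphi$ for $l$, symmetrise $\varphi \leftrightarrow -\varphi$, switch back to cartesian coordinates for $\vp$, and finally change $(l,\alpha)$ into $\olxp$ via $\oc\olxp = \oc\olx + \polv + {}^\perp\{l\,e^{i\alpha}\} - \polvp$, whose Jacobian is $l/\oc^2$. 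The relations $l^2 = r^2 + (\rp)^2 - 2 r\rp \cos\varphi$ and $(\rp)^2 = r^2 + l^2 + 2 r l \cos\psi$ are exactly those defining the angles in the statement.

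The only new ingredient compared to Proposition~\ref{FirstFormula} is that
\[
D(v,\vp) = R(\alpha)\,[\,\tD(\cdot,\varphi)\,r\,(1,0) + \tDp(\cdot,\varphi)\,\rp\,(\cos\varphi,\sin\varphi)\,],
\]
where $R(\alpha)$ is the planar rotation of angle $\alpha$. Under the shift $\alpha \to \alpha - \psi$ one has $R(\alpha) = R(\alpha')R(-\psi)$, and $R(-\psi)(1,0) = e^{-i\psi}$, $R(-\psi)(\cos\varphi,\sin\varphi) = e^{i(\varphi-\psi)}$; combined with the symmetrisation $\varphi \to -\varphi$ (which flips $\psi \to -\psi$), this produces the four complex exponentials $r e^{\pm i\psi}$ and $\rp e^{\pm i(\varphi-\psi)}$ appearing in the statement. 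At the final change of variables, $\alpha$ is pinned by $l e^{i\alpha} = {}^\perp z$, hence $(\cos\alpha,\sin\alpha) = (z_2,-z_1)/|z|$ and
\[
R(\alpha) = \frac{1}{|z|}\left(\begin{array}{rr} z_2 & z_1 \\ -z_1 & z_2 \end{array}\right),
\]
which, together with the $1/2$ coming from the symmetrisation, is precisely the prefactor in front of the bracket in the statement. Combining the Jacobian $l/\oc^2 = |z|/\oc^2$ with the measure $2 l \, \md l/(\sqrt{l^2 - (r-\rp)^2}\sqrt{(r+\rp)^2 - l^2})$ reconstructs the factor $\chi(r,\rp,z)$.

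The main obstacle is the consistent bookkeeping of the three operations, namely the action of $R(\alpha)$, the shift $\alpha \to \alpha - \psi$, and the $\varphi$-symmetrisation, when composed together. In particular the signs of $\psi$ and $\varphi - \psi$ must be tracked carefully because $\psi(-\varphi) = -\psi(\varphi)$ and $R(-\psi)$ is not symmetric; once this is settled every remaining step is a verbatim copy of the corresponding step in Proposition~\ref{FirstFormula}.
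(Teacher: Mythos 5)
Your proposal is correct and follows essentially the same route as the paper's own proof in Appendix~\ref{A}: part~2 is indeed a direct application of Proposition~\ref{FirstFormula} with $\tC = \tD\,v_3 + \tDp\,\vtp$, and for part~1 the paper performs exactly the decomposition $D = R(\alpha+\psi)[\tD\,r e^{-i\psi} + \tDp\,\rp e^{i(\varphi-\psi)}]$, the periodicity shift in $\alpha$, the $\varphi$-symmetrisation using $\psi(-\varphi)=-\psi(\varphi)$, and the final identification $l e^{i\alpha} = {^\perp z}$ giving the stated matrix prefactor. Your bookkeeping of the signs and of the Jacobian matches the paper's computation.
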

It is worth analyzing the case of even/odd coefficients $\tD, \tDp$.
\begin{pro}
\label{SecondEven} With the same notations as in Proposition \ref{SecondFormula} assume that the functions $\tD, \tDp$ are even with respect to $\varphi$. Then we have
\begin{enumerate}
\item
\[
{\cal D} (r, v_3, \rp, \vtp, z) = [(\tD (\varphi)+ \tDp (\varphi)) \;r \cos \psi + |z| \tDp (\varphi)] \chi (r, \rp, z) \frac{^\perp z}{|z|}
\]
\item
\[
{\cal D}_3 (r, v_3, \rp, \vtp, z) = [(\tD (\varphi)+ \tDp (\varphi)) \;v_3 +  \tDp (\varphi) (\vtp - v_3)] \chi (r, \rp, z).
\]
\end{enumerate}
\end{pro}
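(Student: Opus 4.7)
The plan is to apply the general formulas of Proposition \ref{SecondFormula} and exploit the evenness hypothesis $\tD(-\varphi) = \tD(\varphi)$, $\tDp(-\varphi) = \tDp(\varphi)$ to collapse each pair of complex-exponential terms into a single real coefficient. After this collapse, the only remaining nontrivial work is a single matrix-vector multiplication and one trigonometric identity that comes from the defining relations between $r, \rp, |z|, \varphi, \psi$.

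Statement (2) for ${\cal D}_3$ is immediate: under evenness each of the two pairs doubles, yielding ${\cal D}_3 = [\tD(\varphi) v_3 + \tDp(\varphi)\vtp]\chi$, which becomes the claimed $[(\tD + \tDp) v_3 + \tDp(\vtp - v_3)]\chi$ by adding and subtracting $\tDp v_3$. For ${\cal D}$ in statement (1), I would first use $e^{i\theta} + e^{-i\theta} = 2\cos\theta\cdot (1,0)$ (reading $e^{i\theta}$ as the $\R^2$ vector $(\cos\theta,\sin\theta)$) to reduce the bracket in Proposition \ref{SecondFormula} to $2[\tD(\varphi) r\cos\psi + \tDp(\varphi)\rp\cos(\varphi - \psi)](1,0)$. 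A direct computation shows that the matrix appearing in Proposition \ref{SecondFormula} sends $(1,0)^\top$ to $(z_2, -z_1)^\top = {^\perp z}$, so after dividing by $2|z|$ one obtains
\[
{\cal D} = \bigl[\tD(\varphi) r\cos\psi + \tDp(\varphi) \rp\cos(\varphi - \psi)\bigr]\chi\,\frac{{^\perp z}}{|z|}.
\]

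The one genuinely nontrivial step, and hence the main (though still elementary) obstacle, is the identity $\rp\cos(\varphi - \psi) = r\cos\psi + |z|$, since once this is in hand the bracket becomes $(\tD + \tDp) r\cos\psi + |z|\tDp$ and statement (1) follows. I would prove this identity by first extracting from the second defining relation the formula $\cos\psi = (\rp\cos\varphi - r)/|z|$, and then obtaining $\sin\psi = \rp\sin\varphi / |z|$ either by the law of sines applied to the triangle with sides $r, \rp, |z|$, or by computing $\sin^2\psi = 1 - \cos^2\psi$ and invoking the first defining relation. Expanding $\rp\cos(\varphi - \psi) = \rp\cos\varphi\cos\psi + \rp\sin\varphi\sin\psi$, substituting these values, and multiplying through by $|z|$ reduces the desired identity to $(\rp)^2 - r\rp\cos\varphi = r\rp\cos\varphi - r^2 + |z|^2$, which is precisely a rearrangement of the first defining relation $|z|^2 = r^2 + (\rp)^2 - 2r\rp\cos\varphi$.
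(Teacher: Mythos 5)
Your proof is correct and follows essentially the same route as the paper: collapse the bracket using evenness, observe that the matrix sends $(1,0)$ to ${^\perp z}$, and reduce everything to the identity $\rp\cos(\varphi-\psi)=r\cos\psi+|z|$. The only (immaterial) difference is that you verify this identity by computing $\cos\psi$ and $\sin\psi$ componentwise from the defining relations, whereas the paper obtains it from the cosine theorem in the triangle with vertices $O$, $(r,0)$, $\rp e^{i\varphi}$.
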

\begin{proof}
1. Clearly we have
\[
\frac{1}{2}[\tD (r, v_3, \rp, \vtp, \varphi)\;r e ^{-i\psi} + \tD (r, v_3, \rp, \vtp, -\varphi)\;r e ^{i\psi}]= \tD (r, v_3, \rp, \vtp, \varphi)\;r(\cos \psi, 0)
\]
and
\[
\frac{1}{2}[\tDp (r, v_3, \rp, \vtp, \varphi)  \;\rp e ^{i (\varphi -\psi)} + \tDp (r, v_3, \rp, \vtp, - \varphi)  \;\rp e ^{-i (\varphi -\psi)} ] = \tDp   \;\rp (\cos (\psi - \varphi ),0).
\]
Consider now the triangle of vertices $O = (0,0), A = (r,0), A^\prime = \rp e ^{i \varphi}$ in $\R^2$. The definitions for $\varphi, \psi$ assure that $|z| = |AA^\prime|$ and that $\psi$ is the supplement of the angle opposite to $OA^\prime$. Applying the cosine theorem with respect to the angle opposite to $OA$ one gets
\begin{equation}
\label{Equ63} r^2 = (\rp)^2 + |z|^2 - 2 \rp |z| \cos (\psi - \varphi ).
\end{equation}
Combining with the definition of $\psi$ yields
\[
0 = 2|z|^2 - 2 \rp |z| \cos (\psi - \varphi ) + 2 r |z| \cos \psi
\]
implying 
\begin{equation}
\label{Equ64} r \cos \psi - \rp \cos (\psi - \varphi ) + |z| = 0.
\end{equation}
Finally one gets
\begin{align*}
{\cal D} (r, v_3, \rp, \vtp, z) & =  [ \tD (\varphi)\;r\cos \psi + \tDp (\varphi)  \;\rp \cos (\psi - \varphi )]\chi (r, \rp, z) \frac{^\perp z}{|z|}\\
& = [(\tD (\varphi) + \tDp (\varphi) )\;r \cos \psi  + \tDp (\varphi ) ( \rp \cos (\psi - \varphi ) - r \cos \psi )]
\chi (r, \rp, z)\frac{^\perp z}{|z|} \\
& = [(\tD (\varphi)+ \tDp (\varphi)) \;r \cos \psi + |z| \tDp (\varphi)] \chi (r, \rp, z) \frac{^\perp z}{|z|}.
\end{align*}
2. It follows immediately observing that
\begin{align*}
{\cal D}_3 (r, v_3, \rp, \vtp, z) & = ( \tD (\varphi) \;v_3 + \tDp (\varphi) \;\vtp ) \chi (r, \rp, z) \\
& = [(\tD (\varphi)+ \tDp (\varphi)) \;v_3 +  \tDp (\varphi) (\vtp - v_3)] \chi (r, \rp, z).
\end{align*}
\end{proof}
\begin{pro}
\label{SecondOdd} With the same notations as in Proposition \ref{SecondFormula} assume that the functions $\tD, \tDp$ are odd with respect to $\varphi$. Then we have
\begin{enumerate}
\item
\[
{\cal D} (r, v_3, \rp, \vtp, z) = - [\tD (\varphi)+ \tDp (\varphi)] \;r \sin \psi  \;\chi (r, \rp, z) \;\frac{ z}{|z|}
\]
\item
\[
{\cal D}_3 (r, v_3, \rp, \vtp, z) = 0.
\]
\end{enumerate}
\end{pro}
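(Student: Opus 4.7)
The plan is to reduce directly to Proposition \ref{SecondFormula} by exploiting the odd symmetry. Recall that that proposition furnishes an explicit formula for ${\cal D}$ as a $2\times 2$ matrix times a vector of four exponential terms, plus a separate scalar formula for ${\cal D}_3$. The substitutions $\tD(-\varphi)=-\tD(\varphi)$ and $\tDp(-\varphi)=-\tDp(\varphi)$ will collapse these expressions via the identities $e^{-i\alpha}-e^{i\alpha}=(0,-2\sin\alpha)$ (using the paper's convention $e^{i\alpha}=(\cos\alpha,\sin\alpha)$), so the trigonometry should come out cleanly.

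For the statement on ${\cal D}_3$ this is immediate: the bracket in part 2 of Proposition \ref{SecondFormula} consists of the sums $\tD(\varphi)+\tD(-\varphi)$ and $\tDp(\varphi)+\tDp(-\varphi)$, both of which vanish under the oddness hypothesis.

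For ${\cal D}$, I would first simplify the bracket in part 1 of Proposition \ref{SecondFormula}. The pair $\tD(\varphi)\,r e^{-i\psi}+\tD(-\varphi)\,r e^{i\psi}$ reduces to $(0,-2r\tD(\varphi)\sin\psi)$, and similarly $\tDp(\varphi)\,\rp e^{i(\varphi-\psi)}+\tDp(-\varphi)\,\rp e^{-i(\varphi-\psi)}$ reduces to $(0,2\rp\tDp(\varphi)\sin(\varphi-\psi))$. Applying the matrix $\tfrac{1}{2|z|}\bigl(\begin{smallmatrix} z_2 & z_1\\ -z_1 & z_2\end{smallmatrix}\bigr)$ to a vector of the form $(0,b)$ yields $\tfrac{b}{2|z|}\,z$; hence
\[
{\cal D}(r,v_3,\rp,\vtp,z)\;=\;\frac{-r\tD(\varphi)\sin\psi+\rp\tDp(\varphi)\sin(\varphi-\psi)}{|z|}\;z\;\chi(r,\rp,z).
\]

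The last ingredient is the trigonometric identity $\rp\sin(\varphi-\psi)+r\sin\psi=0$. This is the law of sines in the same auxiliary triangle $O=(0,0)$, $A=(r,0)$, $A'=\rp e^{i\varphi}$ that was used in the proof of Proposition \ref{SecondEven}: the angle at $O$ equals $\varphi$ and, by definition of $\psi$, the angle at $A$ equals $\pi-\psi$; summing angles gives angle at $A'$ equal to $\psi-\varphi$, and then $r/\sin(\psi-\varphi)=\rp/\sin\psi$ is precisely the desired relation. Substituting, the coefficient becomes $-r\sin\psi\,[\tD(\varphi)+\tDp(\varphi)]/|z|$, giving the announced formula. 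No real obstacle is expected — this is essentially a bookkeeping check, parallel to the proof of Proposition \ref{SecondEven} but using the sine rule in place of the cosine rule.
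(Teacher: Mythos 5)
Your proposal is correct and follows essentially the same route as the paper: oddness kills ${\cal D}_3$ outright, and for ${\cal D}$ the exponentials collapse onto the second component, the rotation-like matrix sends $(0,b)$ to $b\,z/(2|z|)$, and the sine rule in the triangle $O$, $A=(r,0)$, $A'=\rp e^{i\varphi}$ gives $r\sin\psi=\rp\sin(\psi-\varphi)$, exactly as in the paper's proof. No gaps.
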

\begin{proof}
1. We have
\[
\frac{1}{2}[\tD (r, v_3, \rp, \vtp,\varphi)\;r e ^{-i\psi} + \tD (r, v_3, \rp, \vtp, -\varphi)\;r e ^{i\psi}]= - (0, \tD (r, v_3, \rp, \vtp, \varphi)\;r \sin \psi)
\]
and
\[
\frac{1}{2}[\tDp (r, v_3, \rp, \vtp, \varphi)  \;\rp e ^{i (\varphi -\psi)} + \tDp (r, v_3, \rp, \vtp, - \varphi)  \;\rp e ^{-i (\varphi -\psi)} ] = - (0, \tDp   \;\rp \sin  (\psi - \varphi )).
\]
The sine theorem applied in the triangle of vertices $O = (0,0), A = (r,0), A^\prime = \rp e ^{i \varphi}$ implies
\[
r \sin \psi = \rp \sin (\psi - \varphi).
\]
We deduce that 
\begin{align*}
{\cal D} (r, v_3, \rp, \vtp, z) & =  - [ \tD (\varphi)\;r\sin \psi + \tDp (\varphi)  \;\rp \sin (\psi - \varphi)] \;\chi (r, \rp, z) \frac{ z}{|z|}\\
& =  - [\tD (\varphi)+ \tDp (\varphi)] \;r \sin \psi  \;\chi (r, \rp, z) \;\frac{ z}{|z|}.
\end{align*}
2. Clearly we have $\tD (\varphi) + \tD (- \varphi) = \tDp (\varphi) + \tDp (- \varphi) =0$ and therefore ${\cal D}_3 = 0$.
\end{proof}
The averages in \eqref{FourAve} come immediately appealing to Propositions \ref{SecondEven}, \ref{SecondOdd}.
\begin{coro}
\label{Cal1} With the notations in Proposition \ref{SecondFormula} we have for any function $f \in \ker {\cal T}$
\begin{enumerate}
\item 
\[
\ave{f, (\olv, 0)}_{\sigma S} = - \oc ^2 \intolxpvp{\sigma f \chi  \frac{r^2 - r \rp \cos \varphi }{|z| ^2 + (v_3 - \vtp )^2}
\left ( \frac{(v_3 - \vtp )^2}{|z|^2} \;^\perp z , v_3 - \vtp \right ) }
\]
\item 
\[
\ave{f, (\olvp, 0)}_{\sigma S} =  \oc ^2 \intolxpvp{\sigma f \chi  \frac{(\rp)^2 - r \rp \cos \varphi }{|z| ^2 + (v_3 - \vtp )^2}
\left ( \frac{(v_3 - \vtp )^2}{|z|^2} \;^\perp z , v_3 - \vtp \right ) }
\]
\item 
\[
\ave{f, (\polv, 0)}_{\sigma S} =  \oc ^2 \intolxpvp{\sigma f \chi  \frac{r^2 - r \rp \cos \varphi }{|z| ^2 }
\left (  z , 0 \right ) }
\]
\item 
\[
\ave{f, (\polvp, 0)}_{\sigma S} =  -\oc ^2 \intolxpvp{\sigma f \chi  \frac{(\rp)^2 - r \rp \cos \varphi }{|z| ^2 }
\left (  z , 0 \right ) }.
\]
\end{enumerate}
\end{coro}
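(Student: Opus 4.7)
The plan is to run Proposition~\ref{SecondFormula}, together with its even/odd specializations \ref{SecondEven} and \ref{SecondOdd}, once for each of the four choices $w\in\{(\olv,0),(\olvp,0),(\polv,0),(\polvp,0)\}$. For each such $w$ I expand
\[
\sigma(|v-\vp|)\,S(v-\vp)\,w \;=\; \sigma\Bigl(w - \frac{(v-\vp)\cdot w}{|v-\vp|^2}(v-\vp)\Bigr),
\]
using $|v-\vp|^2 = |z|^2 + (v_3-\vtp)^2$ on the support of $\chi$ together with the inner products $\olv\cdot\polv=0$, $\olvp\cdot\polv=-r\rp\sin\varphi$, $\olv\cdot\polvp=r\rp\sin\varphi$, $(\olv-\olvp)\cdot\olv = r^2 - r\rp\cos\varphi$ and $(\olv-\olvp)\cdot\olvp = -((\rp)^2 - r\rp\cos\varphi)$. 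The resulting $\R^3$-valued function is then split into its horizontal $\R^2$ part and its third component, each of which must be cast in the template $D = \tD\,\olv+\tDp\,\olvp$ (resp.\ $D_3 = \tD^{(3)}v_3 + \tDp^{(3)}\vtp$) of Proposition~\ref{SecondFormula}.

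For $w=(\olv,0)$ and $w=(\olvp,0)$ the horizontal part is directly a combination $\tD\,\olv+\tDp\,\olvp$ whose coefficients are even in $\varphi$. For $w=(\polv,0)$ and $w=(\polvp,0)$ I first expand the perpendicular vector in the basis $\{\olv,\olvp\}$ via the identities
\[
\polv = \cot\varphi\;\olv - \frac{r}{\rp\sin\varphi}\;\olvp,\qquad \polvp = \frac{\rp}{r\sin\varphi}\;\olv - \cot\varphi\;\olvp,
\]
after which the horizontal coefficients $\tD,\tDp$ become odd in $\varphi$. In each of the four cases the third component is of the form $\tD^{(3)}v_3+\tDp^{(3)}\vtp$ with the same parity as the horizontal one. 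Applying Proposition~\ref{SecondEven} in the first two cases and Proposition~\ref{SecondOdd} in the last two immediately produces ${\cal D}$ and ${\cal D}_3$, expressed in terms of $r\cos\psi$ and $r\sin\psi$.

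To reduce these to the form claimed, I use two identities for the triangle $O=(0,0)$, $A=(r,0)$, $A^\prime=\rp e^{i\varphi}$ of side lengths $r,\rp,|z|$: the cosine theorem at vertex $A$ (or equivalently the defining relation $(\rp)^2=r^2+|z|^2+2r|z|\cos\psi$) gives $|z|\cos\psi = -(r-\rp\cos\varphi)$, while the sine theorem gives $|z|\sin\psi = \rp\sin\varphi$. In the even cases, substituting $|z|\cos\psi$ into $[(\tD+\tDp)r\cos\psi+|z|\tDp]\frac{{}^\perp z}{|z|}$ and combining the two terms over the common denominator $|z|^2+(v_3-\vtp)^2=|v-\vp|^2$ produces precisely the prefactor $\mp\sigma\chi\,\frac{r^2-r\rp\cos\varphi}{|z|^2+(v_3-\vtp)^2}\,\frac{(v_3-\vtp)^2}{|z|^2}\,{}^\perp z$ of items~1 and~2 (with $(\rp)^2-r\rp\cos\varphi$ in place of $r^2-r\rp\cos\varphi$ for item~2); the vertical component follows at once from Proposition~\ref{SecondEven}(2) since $\tD^{(3)}+\tDp^{(3)}=0$.

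The main obstacle lies in items~3 and~4. The basis rewrite of $\polv$ and $\polvp$ introduces factors $1/(\rp\sin\varphi)$ and $1/(r\sin\varphi)$ which look singular, but the odd-case formula of Proposition~\ref{SecondOdd} contributes a factor $r\sin\psi$, and by the sine theorem $r\sin\psi=r\rp\sin\varphi/|z|$, so the offending $\sin\varphi$ cancels cleanly and leaves a smooth kernel directed along $z$ (not ${}^\perp z$). Meanwhile Proposition~\ref{SecondOdd}(2) automatically yields ${\cal D}_3=0$ for these two cases, matching the absence of the $(v_3-\vtp)$-factor in items 3 and 4 of the statement. A short algebraic simplification in each of the four cases then produces the expressions claimed in the corollary.
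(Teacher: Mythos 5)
Your proof is correct. For items 1 and 2 it is exactly the paper's argument: the same decomposition $\sigma S(v-\vp)w=\sigma\bigl(w-\frac{(v-\vp)\cdot w}{|v-\vp|^2}(v-\vp)\bigr)$, the same even coefficients $\tD,\tDp$, and the same triangle identities $r\cos\psi=-\frac{r^2-r\rp\cos\varphi}{|z|}$, $r\cos\psi+|z|=\frac{(\rp)^2-r\rp\cos\varphi}{|z|}$ fed into Proposition \ref{SecondEven}. For items 3 and 4 your route differs slightly from the paper's. The paper keeps the split $S=I-\frac{(v-\vp)\otimes(v-\vp)}{|v-\vp|^2}$: for the rank-one part the coefficients satisfy $\tD=-\tDp$ (odd in $\varphi$), so Proposition \ref{SecondOdd} makes that average vanish outright, and the remaining term $\ave{\intvp{\sigma(\polv,0)f}}$ is obtained by applying Proposition \ref{SecondEven} to $\sigma\,\olv$ (i.e. $\tD=\sigma$, $\tDp=0$) and taking the perpendicular of the resulting vector, which commutes with $\ave{\cdot}$ because $w\mapsto{}^\perp w$ is a fixed linear map. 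You instead expand $\polv$ and $\polvp$ in the non-orthogonal basis $\{\olv,\olvp\}$, obtaining odd coefficients with a $1/\sin\varphi$ singularity, and apply Proposition \ref{SecondOdd} to the whole expression at once. This is legitimate: Proposition \ref{SecondFormula} only ever uses the combinations $\tD\,re^{\mp i\psi}+\tDp\,\rp e^{\pm i(\varphi-\psi)}$, and in the odd case the output involves only $(\tD+\tDp)\,r\sin\psi=(\tD+\tDp)\,r\rp\sin\varphi/|z|$, so the singularity is an artifact of the splitting and cancels, as you note; your values $\tD+\tDp=\frac{\rp\cos\varphi-r}{\rp\sin\varphi}$ and $\frac{\rp-r\cos\varphi}{r\sin\varphi}$ reproduce items 3 and 4 exactly. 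The two variants cost about the same; the paper's avoids singular coefficients, yours avoids the extra perpendicular-commutation step.
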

\begin{proof}
1. We consider the function $D(v,\vp) = \sigma (|v - \vp|) S(v - \vp) ( \olv, 0) = (\tD \;\olv + \tDp \;\olvp, \tDp (\vtp - v_3))$ where
\[
\tD (r, v_3, \rp, \vtp, \varphi) = \sigma \left ( 1 - \frac{r^2 - r \rp \cos \varphi }{r^2 + (\rp)^2 - 2 r \rp \cos \varphi + (v_3 - \vtp)^2}\right )
\]
\[
\tDp (r, v_3, \rp, \vtp, \varphi) = \sigma \frac{r^2 - r \rp \cos \varphi }{r^2 + (\rp)^2 - 2 r \rp \cos \varphi + (v_3 - \vtp)^2}.
\]
Thanks to Proposition \ref{SecondEven} and the identity $r \cos \psi = - \frac{r^2 - r \rp \cos \varphi }{|z|}$ we obtain
\begin{align*}
\ave{\intvp{(\tD \;\olv + \tDp \;\olvp) f (x, \vp)}} & = \oc ^2 \intolxpvp{\sigma f(x_1 ^\prime, x_2 ^\prime, x_3, \vp) \chi \\
& \times \left (r \cos \psi + |z| \frac{r^2 - r \rp \cos \varphi }{|z|^2 + (v_3 - \vtp)^2}   \right )\frac{^\perp z}{|z|}} \\
& = - \oc ^2\intolxpvp{\sigma f(x_1 ^\prime, x_2 ^\prime, x_3, \vp) \chi \\
& \times  \frac{r^2 - r \rp \cos \varphi }{|z|^2 + (v_3 - \vtp)^2}   \frac{(v_3 - \vtp)^2}{|z|}\frac{^\perp z}{|z|}}
\end{align*}
and also 
\begin{align*}
\ave{\intvp{\tDp (\vtp - v_3) f(x,\vp)}} & = \oc ^2 \intolxpvp{\sigma f(x_1 ^\prime, x_2 ^\prime, x_3, \vp) \chi \\
& \times  \frac{r^2 - r \rp \cos \varphi }{|z|^2 + (v_3 - \vtp)^2}   (\vtp - v_3)}
\end{align*}
which justifies the first statement.\\
2. We take $D(v,\vp) = \sigma (|v - \vp|) S(v - \vp) ( \olvp, 0) = (\tD \;\olv + \tDp \;\olvp, \tD (v_3- \vtp))$ where
\[
\tD (r, v_3, \rp, \vtp, \varphi) = \sigma \frac{(\rp)^2 - r \rp \cos \varphi }{r^2 + (\rp)^2 - 2 r \rp \cos \varphi + (v_3 - \vtp)^2}
\]
\[
\tDp (r, v_3, \rp, \vtp, \varphi) = \sigma \left ( 1 - \frac{(\rp)^2 - r \rp \cos \varphi }{r^2 + (\rp)^2 - 2 r \rp \cos \varphi + (v_3 - \vtp)^2}\right ) .
\]
Notice that by \eqref{Equ63}, \eqref{Equ64} we have
\[
r \cos \psi + |z| = \rp \cos (\psi - \varphi ) = \frac{(\rp)^2 + |z|^2 - r^2}{2|z|} = \frac{(\rp)^2 - r \rp \cos \varphi }{|z|}
\]
and in this case we obtain
\begin{align*}
\ave{\intvp{(\tD \;\olv + \tDp \;\olvp) f (x, \vp)}} & = \oc ^2 \intolxpvp{\sigma f(x_1 ^\prime, x_2 ^\prime, x_3, \vp) \chi \\
& \times \left [r \cos \psi + |z| \left ( 1 - \frac{(\rp)^2 - r \rp \cos \varphi }{|z|^2 + (v_3 - \vtp)^2}   \right )\right ]\frac{^\perp z}{|z|}} \\
& =  \oc ^2\intolxpvp{\sigma f(x_1 ^\prime, x_2 ^\prime, x_3, \vp) \chi \\
& \times  \frac{(\rp)^2 - r \rp \cos \varphi }{|z|^2 + (v_3 - \vtp)^2}   \frac{(v_3 - \vtp)^2}{|z|}\frac{^\perp z}{|z|}}
\end{align*}
and 
\begin{align*}
\ave{\intvp{\tD (v_3 - \vtp) f(x,\vp)}} & = \oc ^2 \intolxpvp{\sigma f(x_1 ^\prime, x_2 ^\prime, x_3, \vp) \chi \\
& \times  \frac{(\rp)^2 - r \rp \cos \varphi }{|z|^2 + (v_3 - \vtp)^2}   (v_3 - \vtp)}
\end{align*}
justifying the second statement.\\
3. We take $D(v,\vp) = - \sigma (|v - \vp|) \frac{(v - \vp) \otimes (v - \vp)}{|v - \vp|^2} ( \polv, 0) = (\tD \;\olv + \tDp \;\olvp, \tD v_3 + \tDp \vtp)$ where
\[
\tD (r, v_3, \rp, \vtp, \varphi) = -\sigma \frac{r \rp \sin \varphi }{r^2 + (\rp)^2 - 2 r \rp \cos \varphi + (v_3 - \vtp)^2} = - \tDp.
\]
By Proposition \ref{SecondOdd} we deduce that $\ave{\intvp{D(v,\vp)f(x,\vp)}} = 0$. Therefore 
\[
\ave{f, (\polv, 0)}_{\sigma S} = \ave{\intvp{\sigma (|v - \vp|)(\polv, 0)f(x,\vp)}}.
\]
Applying now Proposition \ref{SecondEven} with $\tD = \sigma, \tDp = 0$ we obtain
\[
\ave{\intvp{\sigma (|v - \vp|)f(x,\vp) \olv }} = \oc ^2 \intolxpvp{\sigma f \chi \;r \cos \psi \frac{^\perp z}{|z|} }
\]
and finally 
\begin{align*}
\ave{f, (^\perp \olv, 0)} _{\sigma S} & = - \oc ^2 \intolxpvp{\sigma f \chi \; r \cos \psi \frac{(z,0)}{|z|}} \\
& = \oc ^2 \intolxpvp{\sigma f \chi \frac{r ^2 - r \rp \cos \varphi }{|z|^2} (z,0)}.
\end{align*}
4. As before, by Proposition \ref{SecondOdd} we have
\[
\ave{\intvp{\sigma (|v - \vp|) f (x,\vp) \frac{(v - \vp) \otimes (v - \vp)}{|v - \vp |^2} ( ^\perp \olvp, 0)}} = 0
\]
and by Proposition \ref{SecondEven} we obtain
\begin{align*}
\ave{\intvp{\sigma (|v - \vp|)f(x,\vp) \olvp }} & = \oc ^2 \intolxpvp{\sigma f \chi \;(r \cos \psi + |z|)\frac{^\perp z}{|z|} }
\\
& = \oc ^2 \intolxpvp{\sigma f \chi \;\frac{(\rp)^2 - r \rp \cos \varphi }{|z|^2} \;{^\perp z}}. 
\end{align*}
At the end one gets
\[
\ave{f, (\polvp, 0)}_{\sigma S} =  -\oc ^2 \intolxpvp{\sigma f \chi  \frac{(\rp)^2 - r \rp \cos \varphi }{|z| ^2 }
\left (  z , 0 \right ) }.
\]
\end{proof}
The last average we will need is $\ave{f}_{\sigma S} = \ave{\intvp{f(x,\vp) \sigma (|v - \vp|) S(v - \vp) }}$. By similar computations as those in the proofs of Propositions \ref{FirstFormula}, \ref{SecondFormula} we obtain (see Appendix \ref{A} for details)
\begin{pro}
\label{ThirdFormula}
For any function $f \in \ker {\cal T}$ we have
\[
\ave{f}_{\sigma S} =\oc ^2 \intolxpvp{\sigma (\sqrt{|z|^2 + (v_3 - \vtp)^2}\;) f (\olxp, x_3, \vp) \chi (|\olv|, |\olvp|, z) S(\;({^\perp z}, \vtp - v_3)\;)}.
\]
\end{pro}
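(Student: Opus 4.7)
The plan is to mimic the arguments developed in the proofs of Propositions \ref{FirstFormula} and \ref{SecondFormula}: represent the average through \eqref{Equ22}, introduce cylindrical coordinates for $\vp$ with angular origin attached to the running averaging angle $\alpha$, perform an $\alpha$-shift that absorbs the angle of the relative $2$-velocity, change the $\varphi$-variable to $l = $ length of that relative velocity, and finish with the change $(l,\alpha) \to \olxp$ of Jacobian $l/\oc^2$. The new feature here is the matrix-valued integrand $\sigma(|v-\vp|) S(v-\vp)$, which I will treat by exploiting the equivariance $S({}^t{\cal O}\, w) = {}^t{\cal O}\, S(w)\, {\cal O}$ of the projector $S$ under orthogonal maps fixing $e_3$.

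First I would apply \eqref{Equ22} to replace $v$ by $(|\olv|e^{i\alpha}, v_3)$ and $\olx$ by $\olx + \polv/\oc - {}^\perp\{|\olv|e^{i\alpha}\}/\oc$, then parametrize $\vp = (\rp e^{i(\alpha+\varphi)}, \vtp)$. With $w_0 := |\olv| - \rp e^{i\varphi} \in \R^2$ and $\tilde R(\alpha) := \mathrm{diag}(R(\alpha), 1)$, the $\R^3$-vector appearing in $S$ is $\tilde R(\alpha)(w_0, v_3-\vtp)$, hence
\[
S\bigl(\tilde R(\alpha)(w_0, v_3-\vtp)\bigr) = \tilde R(\alpha)\, S\bigl((w_0, v_3-\vtp)\bigr)\, \tilde R(-\alpha),
\]
while $\sigma(|v-\vp|)$ depends only on $l := |w_0|$ and on $v_3-\vtp$, i.e.\ is $\alpha$-independent.

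Using $f \in \ker{\cal T}$ to write $f(\cdots, \vp) = g(\olx + \polv/\oc - {}^\perp\{R(\alpha) w_0\}/\oc, x_3, \rp, \vtp)$, and setting $w_0 = l e^{i\psi(r,\rp,\varphi)}$, I would then perform the angular shift $\alpha \to \alpha - \psi$. This simultaneously turns the argument of $g$ into $\olx + \polv/\oc - {}^\perp\{l e^{i\alpha}\}/\oc$ and converts $\tilde R(\alpha) S((l e^{i\psi}, v_3 - \vtp)) \tilde R(-\alpha)$ into $S((l e^{i\alpha}, v_3 - \vtp))$. After this shift the integrand depends on $\varphi$ only through $l$, so changing $\varphi \to l$ as in the proof of Proposition \ref{FirstFormula} yields the factor $4l\, \md l/\sqrt{(l^2 - (r-\rp)^2)((r+\rp)^2 - l^2)}$.

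Finally, I would reintroduce the variable $\vp$ via $g(\oly, x_3, \rp, \vtp) = (2\pi)^{-1}\int_0^{2\pi} f(\oly - \polvp/\oc, x_3, \rp e^{i\alphap}, \vtp)\, \md \alphap$ (valid since $f \in \ker{\cal T}$) and carry out the change $(l,\alpha) \to \olxp := \olx + \polv/\oc - {}^\perp\{l e^{i\alpha}\}/\oc - \polvp/\oc$ of Jacobian $l/\oc^2$. The relation $z := \oc\olx + \polv - (\oc\olxp + \polvp) = {}^\perp\{l e^{i\alpha}\}$ yields $l e^{i\alpha} = -{}^\perp z$ (using ${}^\perp{}^\perp = -I$), so that $(l e^{i\alpha}, v_3 - \vtp) = -({}^\perp z, \vtp - v_3)$, and the parity $S(a) = S(-a)$ gives $S((l e^{i\alpha}, v_3 - \vtp)) = S(({}^\perp z, \vtp - v_3))$. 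Collecting the prefactors $(2\pi)^{-2}\cdot 4\cdot \oc^2\cdot \pi^2 = \oc^2$ (the last $\pi^2$ coming from the definition of $\chi$) delivers the announced formula. The main obstacle is the consistent handling of the matrix factor through the $\alpha$-shift: one must verify that the conjugation by $\tilde R(\pm \alpha)$ combines exactly with the shift to eliminate $\psi$ and produce $S((l e^{i\alpha}, v_3 - \vtp))$, and that the sign conventions inherited from ${}^\perp{}^\perp = -I$ are correctly tracked in the final identification.
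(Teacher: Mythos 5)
Your proposal is correct and follows essentially the same route as the paper's proof: formula \eqref{Equ22}, cylindrical coordinates for $\vp$, the angular shift absorbing $\psi$, the change $\varphi \to l$, reintroduction of $\alphap$, and the final change $(l,\alpha)\to\olxp$ with Jacobian $l/\oc^2$. The only cosmetic difference is that the paper first splits $S(w)=I-\frac{w\otimes w}{|w|^2}$, disposing of the scalar part by Proposition \ref{FirstFormula} with $C=\sigma$ and computing the rank-one tensor part explicitly, whereas you keep $S$ intact and invoke its rotation equivariance — which amounts to the same computation.
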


\subsection{The averaged \fpl operator}
We are ready to determine the average of the \fpl kernel. For the sake of presentation we treat separately the gain and loss parts. Recall that the \fpl gain part appears as a velocity diffusion, where the diffusion matrix is a convolution in velocity
\[
\qfpl ^+ (f,f) = \divv \left \{\intvp{\sigma ( |v - \vp|) S(v - \vp) f(\vp) }\;\nabla _v f(v)   \right \}.
\]
The averaged \fpl kernel will keep the same structure, nevertheless diffusion and convolution have to be considered both in velocity and space perpendicular directions, as we have already observed in the relaxation case (see Remark \ref{Normalize}). The proof is postponed to Appendix \ref{B}.
\begin{pro}
\label{GainLFP} For any function $f = f(x,v)$ satisfying the constraint ${\cal T} f = 0$ we have
\begin{align}
\label{Equ70} \ave{\qfpl ^+ (f,f)} & = \divoxv  \{\oc ^2 \intolxpvp{\sigma(\sqrt{|z|^2 + (v _3 - \vtp)^2}\;)f (x_1 ^\prime, x_2 ^\prime, x_3, \vp) \;\chi (|\olv|, |\olvp|, z) \nonumber \\
& \times A^+ \gradoxv f(x,v)} \}
\end{align}
with $\mathrm{div} _{\oc x} = \frac{1}{\oc} \divx$, $\nabla _{\oc x} = \frac{1}{\oc} \nabla _x$
\begin{align*}
& A^+ (r, v_3, \rp, \vtp, z)  = \frac{(\rp )^2 \sin ^2 \varphi \;(v _3 - \vtp )^2}{|z|^2 [ |z|^2 + (v_3 - \vtp ) ^2]}\left ( \frac{(\olv,0)}{|\olv|}, \frac{(\polv, 0)}{|\olv|}   \right ) ^{\otimes 2} \\
& + \left [ \frac{r - \rp \cos \varphi }{|z|}\left ( \frac{(\olv,0)}{|\olv|}, \frac{(\polv, 0)}{|\olv|}   \right ) + \left (\frac{({^\perp z}, 0)}{|z|}, 0   \right ) \right ] ^{\otimes 2}  + \frac{(\rp )^2 \sin ^2 \varphi }{|z|^2} \left ( \frac{(\polv,0)}{|\olv|}, -\frac{(\olv, 0)}{|\olv|}   \right ) ^{\otimes 2} \\
& + \left [\frac{(\rp \cos \varphi - r) (v_3 - \vtp)}{|z| \sqrt{|z|^2 + (v_3 - \vtp )^2}}    \left ( \frac{(\polv,0)}{|\olv|}, -\frac{(\olv, 0)}{|\olv|}   \right )   
+ \frac{\left ((v_3 - \vtp)\frac{(z,0)}{|z|}, -|z|e_3  \right )}{\sqrt{|z|^2 + (v_3 - \vtp )^2}} \right ]^{\otimes 2}
\end{align*}
where $z = \z$ and for any $r, \rp \in \R_+, z \in \R^2$ such that $|r - \rp| < |z| < r + \rp$, the angle $\varphi \in (0,\pi)$ is given by $|z|^2 = r ^2 + (\rp)^2 - 2 r \rp \cos \varphi$. 
\end{pro}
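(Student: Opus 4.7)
The approach is to work in weak form. For any smooth $\phi \in \ker {\cal T}$, the variational characterization of the average (Proposition~\ref{AverageProp}) combined with integration by parts in $v$ yields
\[
\intxv{\phi\,\ave{\qfpl^+(f,f)}} = \intxv{\phi\,\qfpl^+(f,f)} = -\intxv{\ave{\nabla_v\phi\cdot M_\sigma[f]\,\nabla_v f}},
\]
where $M_\sigma[f](x,v) := \intvp{\sigma(|v-\vp|)\,S(v-\vp)\,f(x,\vp)}$. The plan is to evaluate the averaged bilinear form on the right, read off $A^+$ from the resulting coefficient matrix, and integrate back by parts to recover the divergence form \eqref{Equ70}.

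Since $\phi$ and $f$ lie in $\ker{\cal T}$, their velocity gradients expand along the invariants of Section~\ref{AveDiff}: $\nabla_v h = \sum_{i=1}^{5}(b^i\cdot\gradxv h)\,\nabla_v\psi_i$, with $\nabla_v\psi_1 = e_2/\oc$, $\nabla_v\psi_2 = -e_1/\oc$, $\nabla_v\psi_3 = 0$, $\nabla_v\psi_4 = (\olv/|\olv|,0)$ and $\nabla_v\psi_5 = e_3$. By Proposition~\ref{Field} the derivations $b^i\cdot\gradxv$ commute with ${\cal T}$, so by Proposition~\ref{ComDerAve} each scalar $b^i\cdot\gradxv h$ remains in $\ker{\cal T}$ and can be pulled outside the average:
\[
\ave{\nabla_v\phi\cdot M_\sigma[f]\,\nabla_v f} = \sum_{i,j=1}^{5}(b^i\cdot\gradxv\phi)(b^j\cdot\gradxv f)\,\ave{\nabla_v\psi_i\cdot M_\sigma[f]\,\nabla_v\psi_j}.
\]
Each entry of this $5\times 5$ matrix is a scalar convolution $\ave{\intvp{\sigma\,[S(v-\vp):\nabla_v\psi_i\otimes\nabla_v\psi_j]\,f^\prime}}$, i.e.\ one of the rotation-invariant building blocks already computed in Propositions~\ref{AveSca} and~\ref{ThirdFormula}; the off-diagonal couplings of $(\olv,0)$ with $(\polv,0)$ drop by the parity rule of Remark~\ref{EvenOdd}.

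Once these explicit values are substituted, one integrates by parts back in $(x,v)$ against $\phi$. Since the averaged coefficients depend only on the invariants $|\olv|, v_3, |\olvp|, \vtp, z$, the adjoint of $b^i\cdot\gradxv$ is essentially $-b^i\cdot\gradxv$, up to the divergence contribution $1/|\olv|$ present only when $i=4$ (Remark~\ref{Div}). Collecting the resulting vector fields and recasting them in terms of $\gradoxv f = (\nabla_x f/\oc,\,\nabla_v f)$ then yields the divergence form $\divoxv\{\oc^2\!\int\!\sigma f^\prime\chi\,A^+\gradoxv f\}$, thereby defining $A^+$ entry by entry.

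The main obstacle is the final algebraic repacking. The averaged $5\times 5$ matrix produces a quadratic form in $\gradoxv f$ with many cross-terms in $r,\rp,\cos\varphi,\sin\varphi,v_3-\vtp,z$; one must complete the square along four specific directions in $\R^6$ to obtain the announced decomposition $A^+ = \sum_{i=1}^4 \xi^i\otimes\xi^i$. Two of the $\xi^i$ are built from the in-plane pair $((\olv,0),(\polv,0))$ and $((\polv,0),-(\olv,0))$, the other two from the geometric vectors $(e_3,{}^\perp z)$ entering through Propositions~\ref{SecondEven}--\ref{SecondOdd}. The freedom to add any tensor annihilated by $\gradoxv f$ on $\ker{\cal T}$ --- for instance a multiple of $((\olv,0)/|\olv|,(\polv,0)/|\olv|)^{\otimes 2}$, whose contraction with $\gradoxv f$ equals ${\cal T}f/(\oc|\olv|)=0$ --- is precisely what allows one to choose the manifestly symmetric positive semi-definite form of $A^+$ written in the statement, consistent with the dissipative character of $\qfpl^+$.
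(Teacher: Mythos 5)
Your route is in substance the paper's own: the paper likewise reduces $\ave{\qfpl ^+ (f,f)}$ to the commutation formula of Proposition \ref{ComDivAve}, expands $\nabla _v f$ along the commuting derivations $b^i \cdot \gradxv$ of Proposition \ref{Field}, evaluates the resulting building blocks with Propositions \ref{AveSca}, \ref{ThirdFormula} and Corollary \ref{Cal1}, and finishes by completing squares; your weak formulation against $\phi \in \ker {\cal T}$ is the dual presentation of the same computation. Two points, however, need repair. First, the parity claim is misplaced: once you drop $\psi _0$, your basis of velocity gradients is $\{e_2/\oc,\,-e_1/\oc,\,(\olv,0)/|\olv|,\,e_3\}$ and contains no $(\polv,0)$, so the cancellation $\avess{f,(\olv,0),(\polv,0)} = 0$ of Proposition \ref{AveSca} never arises; on the other hand the couplings of $(\olv,0)/|\olv|$ with the constant vectors $e_1, e_2, e_3$ do \emph{not} vanish. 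They are not scalar rotation-invariant quantities in the sense of Proposition \ref{FirstFormula} at all, but the vector-valued averages of Corollary \ref{Cal1} (built on Propositions \ref{SecondEven}, \ref{SecondOdd}), and they are essential: they produce the fourth square of $A^+$, the one containing $\vtz$. Discarding them ``by parity'' yields the wrong matrix.

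Second, there are two identification gaps. Testing only against $\phi \in \ker {\cal T}$ shows that $\ave{\qfpl ^+(f,f)}$ equals the orthogonal projection onto $\ker {\cal T}$ of your explicit divergence expression; to conclude equality you must also verify that this expression itself belongs to $\ker {\cal T}$ (true, since every coefficient depends on $(x,v)$ only through the invariants, but it has to be argued). Relatedly, the ``free tensor'' device as you state it only covers additions of the form $\eta \otimes \olvpolv$, which are killed because their contraction with $\gradoxv f$ is ${\cal T}f/(\oc |\olv|) = 0$. The announced $A^+$ also contains the one-sided cross terms $\olvpolv \otimes \eta$ (from expanding the second square), and their harmlessness rests on the separate identity $\divoxv \{ g \olvpolv\} = {\cal T}g /(\oc |\olv|) = 0$ for integrands $g \in \ker {\cal T}$ --- precisely the cancellation that the paper's use of Proposition \ref{ComDivAve} builds in automatically and that your sketch leaves unaddressed.
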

\begin{remark}
\label{KerAGain} Clearly $A^+$ is symmetric and positive. Notice also that the vectors $(e_3,0)$ and $(\;(z,0),(-\pz, v_3 - \vtp)\;)$ are orthogonal on
\[
\olvpolv,\;\;\polvolv,\;\;
\left (\frac{(\pz, 0)}{|z|},0  \right ),\;\;\vtz
\]
saying that 
\[
A^+(e_3,0)  = A^+ (\;(z,0),(-{^\perp z}, v_3 - \vtp)\;)= 0.
\]
Actually we have for any $z \neq 0$
\[
\ker A^+ (r, v_3, \rp, \vtp, z) = \mathrm{span}\{(e_3,0),\;\;(\;(z,0),(-{^\perp z}, v_3 - \vtp)\;)\}.
\]
\end{remark}
A similar result can be carried out for the loss part $\qfpl ^-$ (see Appendix \ref{B} for the proof).
\begin{pro}
\label{LossLFP} For any function $f = f(x,v)$ satisfying the constraint ${\cal T} f = 0$ we have
\begin{align}
\label{Equ80} \ave{\qfpl ^- (f,f)} & = \divoxv  \{\oc ^2 \intolxpvp{\sigma(\sqrt{|z|^2 + (v _3 - \vtp)^2}\;)f (x,v) \;\chi (|\olv|, |\olvp|, z) \nonumber \\
& \times A^- \gradoxpvp f(x_1 ^\prime, x_2 ^\prime, x_3, \vp)} \}
\end{align}
with 
\begin{align*}
& A^- (r, v_3, \rp, \vtp, z)  = \frac{r \rp  \sin ^2 \varphi \;(v _3 - \vtp )^2}{|z|^2 [ |z|^2 + (v_3 - \vtp ) ^2]} \olvpolv \otimes \olvppolvp  \\
& + \left [ \frac{r - \rp \cos \varphi }{|z|}\left ( \frac{(\olv,0)}{|\olv|}, \frac{(\polv, 0)}{|\olv|}   \right ) + \left (\frac{({^\perp z}, 0)}{|z|}, 0   \right ) \right ] \\
& \otimes \left [ \frac{r \cos \varphi - \rp  }{|z|}\left ( \frac{(\olvp,0)}{|\olvp|}, \frac{(\polvp, 0)}{|\olvp|}   \right ) + \left (\frac{({^\perp z}, 0)}{|z|}, 0   \right ) \right ] \\
& + \frac{r \rp  \sin ^2 \varphi }{|z|^2} \left ( \frac{(\polv,0)}{|\olv|}, -\frac{(\olv, 0)}{|\olv|}   \right ) \otimes  \left ( \frac{(\polvp,0)}{|\olvp|}, -\frac{(\olvp, 0)}{|\olvp|}  \right )\\
& + \left [\frac{(\rp \cos \varphi - r) (v_3 - \vtp)}{|z| \sqrt{|z|^2 + (v_3 - \vtp )^2}}    \left ( \frac{(\polv,0)}{|\olv|}, -\frac{(\olv, 0)}{|\olv|}   \right )   + \vtz \right ] \\
& \otimes 
\left [\frac{(\rp - r \cos \varphi ) (v_3 - \vtp)}{|z| \sqrt{|z|^2 + (v_3 - \vtp )^2}}    \left ( \frac{(\polvp,0)}{|\olvp|}, -\frac{(\olvp, 0)}{|\olvp|}   \right )   + \vtz \right ]
\end{align*}
where $z = \z$ and for any $r, \rp \in \R_+, z \in \R^2$ such that $|r - \rp| < |z| < r + \rp$, the angle $\varphi \in (0,\pi)$ is given by $|z|^2 = r ^2 + (\rp)^2 - 2 r \rp \cos \varphi$. 
\end{pro}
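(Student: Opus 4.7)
The proof follows the same pattern as Proposition \ref{GainLFP}, with the structural difference that the factor $f$ now sits outside the inner integral while the derivative acts on the primed variable. First I would write
\[
\qfpl^-(f,f)(x,v) = \divv\{f(x,v)\, J^-(x,v)\},\qquad J^-(x,v) := \intvp{\sigma(|v-\vp|) S(v-\vp) \nabla_{\vp} f(x,\vp)},
\]
and apply the commutation formula between divergence and average (Proposition \ref{ComDivAve}) to $\ave{\qfpl^-(f,f)} = \ave{\divv\{f\,J^-\}}$. Since $f \in \ker {\cal T}$ is constant along the characteristic flow \eqref{Equ7}, the factor $f(x,v)$ passes through each average produced by Proposition \ref{ComDivAve}, so everything reduces to computing averages of the components of $J^-$ contracted against $\olv/|\olv|$, $\polv/|\olv|$, $e_3$ and the perpendicular partner $^\perp J^-_\olv/\oc$.

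Next I would use the constraint ${\cal T} f = 0$ to decompose $\nabla_{\vp} f(x,\vp)$ in the frame adapted to the primed invariants, exactly as the identity $\nabla_\olv f = -\frac{^\perp \nabla_\olx f}{\oc} + (\partial_r g)\frac{\olv}{|\olv|}$ follows from Proposition \ref{Field}. This rewrites $J^-$ as a finite sum of integrals of the form $\intvp{\sigma(|v-\vp|) S(v-\vp) W(v,\vp) \,(b^{i\prime}\cdot\gradxvp f)(x,\vp)}$, with $W(v,\vp) \in \{(\olvp,0), (\polvp,0), e_3\}$. Each resulting scalar/vector average can then be computed using Propositions \ref{AveSca}, \ref{SecondFormula}, Corollary \ref{Cal1} and Proposition \ref{ThirdFormula}. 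In those formulas the change of variables from $\vp$ to $(\olxp,\vp)$ replaces the $(x,\vp)$ dependence by a $(\olxp,x_3,\vp)$ dependence, and the primed derivations $b^{i\prime}\cdot\gradxvp f$ assemble into the components of $\gradoxpvp f(\olxp,x_3,\vp)$. This produces a representation
\[
\ave{\qfpl^-(f,f)} = \divoxv\left\{\oc^2 \intolxpvp{\sigma\chi\, f(x,v)\, A^- \gradoxpvp f(\olxp,x_3,\vp)}\right\}
\]
for some matrix $A^-$.

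Finally, I would check that $A^-$ admits the announced tensor decomposition $\sigma\chi A^- = \sum_{i=1}^4 \eps_i \xi^i(\olx,v,\olxp,\vp)\otimes \xi^i(\olxp,\vp,\olx,v)$, with the same vectors $\xi^i$ read off from the proof of Proposition \ref{GainLFP}. The main obstacle is this bookkeeping: one must track how each term transforms under the swap $(\olx,v)\leftrightarrow(\olxp,\vp)$ (equivalently $r\leftrightarrow\rp$, $v_3\leftrightarrow\vtp$, $z\to -z$) and verify the precise sign pattern $\eps_1=\eps_2=-1,\,\eps_3=\eps_4=1$. The minus signs for $i=1,2$ originate from the fact that the perpendicular contributions of $\nabla_{\vp} f$ enter with opposite orientation compared to $\nabla_v f$ in the gain calculation: contracting $S(v-\vp)$ against $(\olvp,0)$ or $(\polvp,0)$ (arising from the decomposition of $\nabla_{\vp} f$ along the primed invariants) introduces a sign with respect to contracting against $(\olv,0)$ or $(\polv,0)$, as is visible in Corollary \ref{Cal1} where items 1 and 2 (resp. 3 and 4) have opposite signs. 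Once this sign bookkeeping is completed, the coefficients $r\rp\sin^2\varphi$, $(r-\rp\cos\varphi)/|z|$, $(r\cos\varphi-\rp)/|z|$, etc., fall into place as the pairwise products $c^i(r,v_3,\rp,\vtp,z)\, c^i(\rp,\vtp,r,v_3,-z)$ of the coefficients used in $A^+$.
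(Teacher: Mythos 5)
Your proposal follows essentially the same route as the paper's proof in Appendix B: apply the divergence--average commutation formula of Proposition \ref{ComDivAve} to $\divv\{f\,J^-\}$, decompose $\nabla_{\vp}f$ along the primed invariant frame furnished by Proposition \ref{Field}, evaluate the resulting averages with Propositions \ref{AveSca}, \ref{SecondFormula}, \ref{ThirdFormula} and Corollary \ref{Cal1}, and regroup the assembled matrix into the four rank-one products by tracking the swap $(\olx,v)\leftrightarrow(\olxp,\vp)$ and the sign pattern. The paper carries out exactly this computation, writing $A^-$ first as a sum $A^-_1+\cdots+A^-_6+B^-$ and then completing the tensor products via two algebraic identities, so your sketch is a correct outline of the same argument.
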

For any $\olx = (x_1, x_2), \olxp = (x_1 ^\prime, x_2 ^\prime)\in \R^2$, $v, \vp \in \R^3$ we introduce the fields
\begin{align*}
\xi ^1 (\olx, v, \olxp, \vp) & = \{  \sigma \chi \}^{1/2} \frac{\rp \sin  \varphi \;(v_3 - \vtp )}{|z| \sqrt{|z|^2 + (v_3 - \vtp )^2 }}\;\olvpolv\\
\xi ^2 (\olx, v, \olxp, \vp) & = \{  \sigma \chi \}^{1/2}\left [\frac{r - \rp \cos \varphi }{|z|}\olvpolv + \left ( \frac{({^\perp z},0)}{|z|}, 0\right )     \right ] \\
\xi ^3 (\olx, v, \olxp, \vp) & = \{  \sigma \chi \}^{1/2}\frac{ \rp  \sin  \varphi }{|z| } \polvolv \\
\frac{\xi ^4 (\olx, v, \olxp, \vp)}{\{  \sigma \chi \}^{1/2}} & =  
 \frac{(\rp \cos \varphi - r)(v_3 - \vtp)}{|z|\sqrt{|z|^2 + (v _3 - \vtp)^2}}\polvolv + 
\vtz 
\end{align*}
where $r = |\olv|, \rp = |\olvp|, z = \z, \sigma = \sigma{\sqrt{|z|^2 + (v _3 - \vtp)^2}}, \chi = \chi (r, \rp, z)$ and $\varphi \in (0,\pi)$ is given by $|z|^2 = r ^2 + (\rp )^2 - 2 r \rp \cos \varphi$, $|r - \rp| < |z| < r + \rp$. Thanks to Propositions \ref{GainLFP}, \ref{LossLFP} we obtain the representation formula
\begin{pro}
\label{LFPKerT} Consider a function $f = f(x,v)$ satisfying the constraint ${\cal T} f = 0$. Then\\
1.
The averaged \fpl operator writes
\begin{align}
\label{Equ90} & \oc ^{-2} \ave{\qfpl (f,f)} (x,v) = \nonumber \\
&  \divoxv \left \{  \intolxpvp{\;\sum _{i = 1} ^4 f(\olxp, x_3, \vp) \xi ^i (\olx, v, \olxp, \vp) \otimes \xi ^i (\olx, v, \olxp, \vp) \gradoxv f (x,v)}  \right \} \nonumber \\
- & \divoxv \left \{  \intolxpvp{\;\sum _{i = 1} ^4 f(x,v) \xi ^i (\olx, v, \olxp, \vp) \otimes \eps _i \xi ^i (\olxp, \vp, \olx, v) \gradoxpvp f (\olxp, x_3, \vp)}  \right \}
\end{align}
where $\eps _1 = \eps _2 = -1, \eps _3 = \eps _4 = 1$.\\
2. The following properties hold true for any fixed $x_3 \in \R$
\[
\intolxv{\ave{\qfpl (f,f)}(x,v) } = 0,\;\intolxv{v \ave{\qfpl (f,f)}(x,v) } = 0
\]
\[
\intolxv{\!\frac{|v|^2}{2}\!\ave{\qfpl (f,f)}(x,v) } = 0,\intolxv{\;\ln f \ave{\qfpl (f,f)}(x,v) } \leq 0.
\]
\end{pro}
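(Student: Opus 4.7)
\textit{Part 1.} Combining Propositions \ref{GainLFP} and \ref{LossLFP}, $\ave{\qfpl (f,f)}$ is the difference of two $\divoxv$ expressions whose integrand quadratic forms are $\sigma\chi\,A^+$ and $\sigma\chi\,A^-$. The first claim reduces to the algebraic identities $\sigma\chi\, A^+ = \sum_{i=1}^4 \xi^i\otimes\xi^i$ and $\sigma\chi\, A^- = \sum_{i=1}^4 \eps_i\,\xi^i(\olx,v,\olxp,\vp)\otimes\xi^i(\olxp,\vp,\olx,v)$, both checked term by term. Each of the four summands defining $A^+$ is by direct inspection the outer square of $\xi^i/\{\sigma\chi\}^{1/2}$. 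The four summands of $A^-$ then match once one records how each $\xi^i$ transforms under the swap $(\olx,v)\leftrightarrow(\olxp,\vp)$: this swap sends $z\to -z$, $v_3-\vtp\to -(v_3-\vtp)$, $r\leftrightarrow\rp$, $\olvpolv\leftrightarrow\olvppolvp$, $\polvolv\leftrightarrow\polvpolvp$, and leaves $\sigma,\chi,\varphi$ and the vector $\vtz$ invariant. The prefactors of $\xi^1$ (containing $\rp\sin\varphi\,(v_3-\vtp)$) and of $\xi^2$ (containing $(r-\rp\cos\varphi)/|z|$ paired with $({^\perp z},0)/|z|$) change sign under the swap, yielding $\eps_1=\eps_2=-1$; those of $\xi^3$ and $\xi^4$ are swap-invariant, yielding $\eps_3=\eps_4=1$.

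\textit{Part 2, conservation laws.} I would derive the mass, momentum and kinetic energy balances from the self-adjointness of $\ave{\cdot}$. By Proposition \ref{AverageProp} and Remark \ref{Ave4D}, for each fixed $(x_3,v_3)\in\R^2$ the operator $\ave{\cdot}$ is the orthogonal projection onto $\ker{\cal T}$ in $L^2(\R^2_\olx\times\R^2_\olv)$; integrating over $v_3$ and recalling $\ave{\qfpl}(f,f):=\ave{\qfpl (f,f)}$, this yields, formally,
\[
\intolxv{\psi(x,v)\,\ave{\qfpl (f,f)}(x,v)} = \intolxv{\ave{\psi}(x,v)\,\qfpl(f,f)(x,v)}.
\]
For $\psi \in\{1, v_3, |v|^2/2\}$, $\psi$ is an invariant of the flow \eqref{Equ7}, hence $\ave{\psi}=\psi$, reducing the corresponding identities to the pointwise-in-$x$ balances of $\qfpl$ recalled in Proposition \ref{ProLFP}, integrated over $\olx$. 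For $\psi = v_1$ and $\psi = v_2$, formula \eqref{Equ22} directly gives $\ave{v_1} = \frac{1}{2\pi}\int_0^{2\pi}(\cos\alpha\,v_1 - \sin\alpha\,v_2)\,d\alpha = 0$ and similarly $\ave{v_2}=0$, so those integrals vanish at once; together with the $v_3$-component this proves $\intolxv{v\,\ave{\qfpl (f,f)}}=0$.

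\textit{Part 2, entropy.} For the dissipation inequality I would take $\psi=\ln f$. Since ${\cal T} f = 0$, $f$ is constant along the flow \eqref{Equ7} and so is $\ln f$, whence $\ave{\ln f}=\ln f$ and self-adjointness yields
\[
\intolxv{\ln f\,\ave{\qfpl (f,f)}} = \intolxv{\ln f\,\qfpl(f,f)} \leq 0,
\]
the inequality being the classical \fpl entropy dissipation from Proposition \ref{ProLFP} integrated over $\olx$. The main obstacle lies in Part 1: matching the eight rank-one contributions term by term and reading off the signs $\eps_i$ from the behaviour of each $\xi^i$ under the swap is tedious, though mechanical. Once that is settled, Part 2 is an immediate consequence of the self-adjointness of the average operator together with the classical collision invariants of $\qfpl$. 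An alternative route, which also recovers the explicit dissipation $-\frac{\oc^2}{2}\sum_i\int\!\!\int f f' (\xi^i\cdot\gradoxv\ln f - \eps_i(\xi^i)'\cdot\gradoxpvp\ln f')^2$ announced in the introduction, starts from \eqref{Equ90}: integrate by parts against $\ln f$ and symmetrize the resulting quadruple integral under the same swap $(\olx,v)\leftrightarrow(\olxp,\vp)$.
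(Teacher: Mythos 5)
Your proposal is correct and follows essentially the same route as the paper: Part 1 is the term-by-term identification of the four rank-one summands of $A^+$ and $A^-$ from Propositions \ref{GainLFP} and \ref{LossLFP} with $\xi^i\otimes\xi^i$ and $\eps_i\,\xi^i\otimes(\xi^i)'$ under the swap $(\olx,v)\leftrightarrow(\olxp,\vp)$, and Part 2 uses exactly the paper's argument — the variational characterization of $\ave{\cdot}$ at fixed $(x_3,v_3)$ (Remark \ref{Ave4D}) to transfer $1, v_3, |v|^2/2, \ln f \in \ker{\cal T}$ onto $\qfpl$ and invoke Proposition \ref{ProLFP}, plus $\ave{v_1}=\ave{v_2}=0$ for the perpendicular momentum components. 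The alternative symmetrization route you sketch at the end is what the paper reserves for the extended operator in Proposition \ref{ConserLaw}.
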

\begin{proof}
1. By Proposition \ref{GainLFP} we know that 
\begin{align}
\label{Equ91} 
 \oc ^{-2} & \ave{\qfpl ^+(f,f)}  (x,v) =\\
  & \divoxv \left \{  \intolxpvp{\;\sum _{i = 1} ^4 f(\olxp, x_3, \vp) (\xi ^i)^{\otimes 2} (\olx, v, \olxp, \vp)  \gradoxv f (x,v)}  \right \}.\nonumber 
\end{align}
Observe that we have $\chi (\rp, r, -z) = \chi (r, \rp, z)$. Therefore the permutation $(\olx, v) \longleftrightarrow (\olxp, \vp)$ leads to
\begin{align*}
\xi ^1 (\olxp, \vp, \olx, v) & = \{  \sigma \chi \}^{1/2} \frac{r\sin  \varphi \;(\vtp - v_3 )}{|z| \sqrt{|z|^2 + (v_3 - \vtp )^2 }}\;\olvppolvp
\\
\xi ^2 (\olxp, \vp, \olx, v) & = \{  \sigma \chi \}^{1/2}\left [\frac{\rp - r \cos \varphi }{|z|}\olvppolvp - \left ( \frac{({^\perp z},0)}{|z|}, 0\right )     \right ]
\\
\xi ^3 (\olxp, \vp, \olx, v) & = \{  \sigma \chi \}^{1/2}\frac{ r  \sin  \varphi }{|z| } \polvpolvp
\\
\frac{\xi ^4 (\olxp, \vp, \olx, v)}{\{  \sigma \chi \}^{1/2}} & = 
\frac{(r \cos \varphi - \rp)(\vtp - v_3)}{|z|\sqrt{|z|^2 + (v _3 - \vtp)^2}}\polvpolvp + 
\vtz  
\end{align*}
where $z = \z$. By Proposition \ref{LossLFP} one gets
\begin{align}
\label{Equ92} 
& \oc ^{-2}  \ave{\qfpl ^-(f,f)}  (x,v) =\\
& \divoxv \left \{  \intolxpvp{\;\sum _{i = 1} ^4 f(x,v) \xi ^i (\olx, v, \olxp, \vp) \otimes \eps _i \xi ^i (\olxp, \vp, \olx, v) \gradoxpvp f (\olxp, x_3, \vp)}  \right \}\nonumber
\end{align}
and the first statement follows combining \eqref{Equ91} and \eqref{Equ92}.\\
2. The mass, third momentum component and kinetic energy balances for the averaged \fpl operator come by the corresponding properties of the \fpl kernel. Indeed, since $1, v_3, \frac{|v|^2}{2}$ belong to $\ker {\cal T}$, we can write for any $x_3 \in \R$, thanks to Remark \ref{Ave4D}
\begin{align*}
\intolxv{\left \{1, v_3, \frac{|v|^2}{2}\right \} \ave{\qfpl (f,f)}} & = \intolxv{\left \{1, v_3, \frac{|v|^2}{2}\right \} {\qfpl (f,f)}} \\
& =(0,0,0).
\end{align*}
Similarly since $\ln f \in \ker {\cal T}$ one gets
\begin{align}
\label{EquDegen} & \intolxv{\ln f \ave{\qfpl (f,f)}}  = \intolxv{\ln f \;{\qfpl (f,f)}}  \\
  = & -\frac{1}{2} \intolxv{\intvp{\sigma (|v - \vp|) f (x,v) f (x, \vp) \frac{|(v - \vp) \wedge ( \nabla _v \ln f - \nabla _{\vp} \ln f )|^2}{|v - \vp |^2}}\!} \leq 0.\nonumber
\end{align}
Finally observe that $\ave{v_1} = \ave{v_2} = 0$, $\ave{\qfpl (f,f)} \in \ker {\cal T}$ and thus trivially 
\[
\intolxv{(v_1, v_2) \ave{\qfpl (f,f)}} = (0,0).
\]
\end{proof}
We establish formally the limit model stated in Theorem \ref{MainResultLFP}.
\begin{proof} (of Theorem \ref{MainResultLFP}) Plugging the Ansatz $\fe = f + \eps f^1 + \eps ^2 f ^2 +...$ into \eqref{EquPert} we obtain
\begin{align*}
\left ( \partial _t + v_3 \partial _{x_3} + \frac{q}{m} E \cdot \nabla _v  + \frac{1}{\eps} {\cal T} \right )  (f + \eps f^1 + ...)  & = \qfpl (f,f) \\
& + \eps ( \qfpl (f, f^1) + \qfpl (f^1, f) ) + ...
\end{align*}
implying that 
\[
{\cal T} f = 0,\;\;\partial _t f + v_3 \partial _{x_3} f + \frac{q}{m} E \cdot \nabla _v f + {\cal T} f^1 = \qfpl (f,f).
\]
Applying the average operator, we deduce by Propositions \ref{AveTra}, \ref{LFPKerT} that $f$ satisfies
\[
\partial _t f + \frac{\ave{\polE}}{B} \cdot \gradolx f + v_3 \partial _{x_3} f + \frac{q}{m} \ave{E_3} \partial _{v_3} f = \ave{\qfpl} (f,f).
\]
\end{proof}
As for the relaxation Boltzmann operator, we are searching for extensions of the averaged \fpl operator to the whole space of densities $f = f (x,v)$, not necessarily in the kernel of ${\cal T}$. One possibility is to consider the extension $\ave{\qfpl}$ obtained thanks to \eqref{Equ90}, that is for any $f$
\begin{align}
\label{Equ93} & \oc ^{-2} \ave{\qfpl } (f,f) = \nonumber \\
&  \divoxv \left \{  \intolxpvp{\;\sum _{i = 1} ^4 f(\olxp, x_3, \vp) \xi ^i (\olx, v, \olxp, \vp) \otimes \xi ^i (\olx, v, \olxp, \vp) \gradoxv f (x,v)}  \right \} \nonumber \\
- & \divoxv \left \{  \intolxpvp{\;\sum _{i = 1} ^4 f(x,v) \xi ^i (\olx, v, \olxp, \vp) \otimes \eps _i \xi ^i (\olxp, \vp, \olx, v) \gradoxpvp f (\olxp, x_3, \vp)}  \right \}.
\end{align}
What is remarkable is that this extension still satisfies the mass, third momentum component, kinetic energy balances and decreases the entropy $f \ln f$, globally in $(\olx, v)$.
\begin{pro}
\label{ConserLaw} Consider two functions $f = f(x,v), \varphi = \varphi (x,v)$. For any $x_3 \in \R$ we have
\begin{align}
\label{EquWeakForm} & \intolxv{\ave{\qfpl}(f,f) \varphi}= - \frac{\oc ^2}{2}  \times \\
& \sum _{i = 1} ^4 \intolxv{\intolxpvp{f f^\prime ( \xi ^i \cdot \nabla \ln f - \eps _i (\xi ^i)^\prime \nabla ^\prime \ln f ^\prime) ( \xi ^i \cdot \nabla  \varphi - \eps _i (\xi ^i)^\prime \nabla ^\prime \varphi ^\prime)}}\nonumber
\end{align}
where 
\[
f= f(x,v),\;f^\prime = f ^\prime(x_1 ^\prime, x_2 ^\prime, x_3, \vp),\;\nabla \varphi = \gradoxv \varphi (x,v),\;
\nabla ^\prime \varphi ^\prime = \gradoxpvp \varphi (x_1 ^\prime, x_2 ^\prime, x_3, \vp)
\]
\[
\xi ^i = \xi ^i (x_1, x_2, v, x_1 ^\prime, x_2 ^\prime, \vp),\;\;(\xi ^i)^\prime = \xi ^i (x_1 ^\prime, x_2 ^\prime, \vp, x_1, x_2, v).
\]
In particular the averaged \fpl operator satisfies the mass, third momentum component, kinetic energy  balances (globally in $(x_1, x_2, v)$)
\[
\intolxv{\left ( 1, v_3, \frac{|v|^2}{2}\right ) \ave{\qfpl}(f,f) } = (0, 0, 0)
\]
and decreases the entropy $f \ln f$ (globally in $(x_1, x_2, v)$)
\[
\intolxv{\ln f \ave{\qfpl}(f,f) } \leq 0.
\]
\end{pro}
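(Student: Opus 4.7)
My starting point is the explicit gain/loss form \eqref{Equ93} of $\ave{\qfpl}(f,f)$. I pair it with a smooth test function $\varphi$, integrate over $(\olx, v)$, and integrate by parts against $\gradoxv$, assuming enough decay to discard boundary terms. Writing $\gradoxv f = f\,\gradoxv \ln f$ and $\gradoxpvp f^\prime = f^\prime \gradoxpvp \ln f^\prime$ and collecting the common factor $ff^\prime$, this first step produces
\[
\intolxv{\ave{\qfpl}(f,f)\varphi} = -\oc^2 \sum_{i=1}^4 \intolxv{\intolxpvp{ff^\prime \bigl(\xi^i \cdot \nabla \ln f - \eps_i (\xi^i)^\prime \cdot \nabla^\prime \ln f^\prime\bigr)\bigl(\xi^i \cdot \nabla \varphi\bigr)}}.
\]
To get \eqref{EquWeakForm} I perform the change of variables $(\olx, v) \leftrightarrow (\olxp, \vp)$, under which $ff^\prime$, $\sigma$ and $\chi$ are invariant while $\xi^i(\olx, v, \olxp, \vp) \leftrightarrow (\xi^i)^\prime$. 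Averaging the original integrand and the swapped one, and using $\eps_i^2 = 1$, the $\varphi$-factor symmetrizes to $\tfrac12(\xi^i \cdot \nabla \varphi - \eps_i (\xi^i)^\prime \cdot \nabla^\prime \varphi^\prime)$, which yields \eqref{EquWeakForm} exactly.

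From the weak form the four assertions follow by suitable choices of $\varphi$. With $\varphi = 1$ both gradients vanish, giving the mass balance. With $\varphi = \ln f$ the right-hand side becomes $-\tfrac{\oc^2}{2}\sum_i \iint ff^\prime (\xi^i \cdot \nabla \ln f - \eps_i (\xi^i)^\prime \cdot \nabla^\prime \ln f^\prime)^2$, manifestly non-positive; combined with the mass balance this gives the entropy inequality. With $\varphi = v_3$, only the $v_3$-component of each $\xi^i$ matters: the velocity parts of $\xi^1, \xi^2, \xi^3$ lie in the $(\polv,0)$- or $(\polvp,0)$-plane and hence have zero $v_3$-component, while for $\xi^4$ the $e_3$-direction is carried by $-|z|/\sqrt{|z|^2+(v_3-\vtp)^2}$, which is invariant under the swap, so $\xi^4_{v_3} = (\xi^4)^\prime_{v^\prime_3} = \eps_4 (\xi^4)^\prime_{v^\prime_3}$ since $\eps_4 = 1$. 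Every parenthesis therefore vanishes pointwise.

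The kinetic energy balance, $\varphi = |v|^2/2$, is the one case that requires real computation and is the main obstacle. For $i = 1,2$ the velocity part of $\xi^i$ is proportional to $(\polv, 0)$, orthogonal to $v = (\olv, v_3)$, so both terms vanish. For $i = 3, 4$ the quantity $\xi^i_v \cdot v - \eps_i (\xi^i)^\prime_{v^\prime} \cdot v^\prime$ has to be simplified using $\polv\cdot \olv = 0$ and $\olv \cdot \olv = r^2$, and the cancellation reduces to the cosine-law identity $r^2 + (\rp)^2 - 2 r \rp \cos \varphi = |z|^2$ that defines $\varphi$, through its rearrangement $r(r - \rp \cos \varphi) + \rp(\rp - r \cos \varphi) = |z|^2$. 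Once this geometric identity is inserted (together with the analogous $v_3$-piece for $i = 4$, where the $-|z|e_3$ contribution from $\vtz$ plays against the $\polvolv$-contribution), all integrands collapse to zero and the kinetic energy balance follows.
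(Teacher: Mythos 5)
Your proof is correct and follows essentially the same route as the paper: integrate \eqref{Equ93} by parts against $\varphi$ at fixed $x_3$, symmetrize via the swap $(\olx,v)\leftrightarrow(\olxp,\vp)$ to reach \eqref{EquWeakForm}, then test with $1$, $\ln f$, $v_3$ and $|v|^2/2$, the last case reducing to the cosine-law identity $r^2+(\rp)^2-2r\rp\cos\varphi=|z|^2$ exactly as in the paper. The only detail worth making explicit is the observation $\xi^i\cdot(e_3,0)=0$, which is what legitimates the integration by parts in $(x_1,x_2,v)$ only, at each fixed $x_3$ (and note that for $i=3$ the kinetic-energy cancellation is immediate by symmetry, without the cosine law).
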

\begin{proof}
Notice that for any $1 \leq i \leq 4$ we have $\xi ^i \cdot (e_3, 0) = 0$ and therefore the operator $\divoxv$ acts only in $(x_1, x_2, v)$. Thus, for any fixed $x_3 \in \R$ we can perform integration by parts with respect to $(x_1, x_2, v)$.
\begin{align}
\label{Equ94} & \intolxv{\ave{\qfpl}(f,f) \varphi }  = - \sum _{i = 1} ^4 \oc ^2 \intolxv{ \intolxpvp{ f f ^\prime \\
& \times \left \{(\xi ^i \cdot \nabla \varphi ) ( \xi ^i \cdot \nabla \ln f )  - \eps _i (\xi ^i \cdot \nabla \varphi ) ( (\xi ^i)^\prime \cdot \nabla ^\prime \ln f ^\prime)   \right \}}}.\nonumber
\end{align}
Performing the change of variables $(x_1 ^\prime, x_2 ^\prime, \vp) \leftrightarrow (x_1, x_2, v)$ yields
\begin{align}
\label{Equ95} & \intolxv{\ave{\qfpl}(f,f) \varphi }  = - \sum _{i = 1} ^4 \oc ^2 \intolxpvp{ \intolxv{ f f ^\prime \\
& \times \left \{((\xi ^i )^\prime \cdot \nabla ^\prime  \varphi ^\prime) ( (\xi ^i ) ^\prime \cdot \nabla ^\prime  \ln f ^\prime)  - \eps _i ((\xi ^i) ^\prime \cdot \nabla ^\prime \varphi ^\prime) ( \xi ^i \cdot \nabla  \ln f )   \right \}}}.\nonumber
\end{align}
Combining \eqref{Equ94}, \eqref{Equ95} one gets by Fubini theorem
\[
\intolxv{\ave{\qfpl}(f,f) \varphi }  = - \frac{\oc ^2}{2}\sum _{i = 1} ^4  \intolxv{ \intolxpvp{ f f ^\prime T^i }}
\]
where 
\[
T^i = \left (\xi ^i \cdot \nabla \varphi - \eps _i   (\xi ^i) ^\prime \cdot \nabla ^\prime \varphi ^\prime   \right ) \;\left (\xi ^i \cdot \nabla \ln f - \eps _i   (\xi ^i) ^\prime \cdot \nabla ^\prime \ln f ^\prime \right),\;\;1 \leq i \leq 4.
\]
Clearly, the divergence form of $\ave{\qfpl}$ guarantees the mass conservation and \eqref{EquWeakForm} applied with $\varphi = \ln f$ ensures that the entropy $f\ln f$ is decreasing
\begin{align*}
\intolxv{\ln f & \ave{\qfpl}(f,f)  }   = - \frac{\oc ^2}{2}\sum _{i = 1} ^4  \intolxv{ \intolxpvp{ f f ^\prime \\
& \times \left (\xi ^i \cdot \nabla \ln f - \eps _i   (\xi ^i) ^\prime \cdot \nabla ^\prime \ln f ^\prime   \right )^2 }}\leq 0,\;x_3 \in \R.
\end{align*}
It remains to show the kinetic energy and third momentum component balances. Thanks to formula \eqref{EquWeakForm} it is sufficient to check that 
\[
\xi ^i \cdot \nabla \frac{|v|^2}{2} - \eps _i (\xi ^i )^\prime \cdot \nabla ^\prime \frac{|\vp|^2}{2} = 0,\;\;1 \leq i \leq 4.
\]
The above condition is trivially satisfied for $i \in \{1,2\}$. For $i = 3$ we have
\[
\xi ^3 \cdot \nabla \frac{|v|^2}{2} - \eps _3 (\xi ^3 )^\prime \cdot \nabla ^\prime \frac{|\vp|^2}{2} = - \{ \sigma \chi \} ^{1/2} \frac{\rp \sin \varphi }{|z|} r + \{ \sigma \chi \} ^{1/2} \frac{r \sin \varphi }{|z|} \rp = 0. 
\]
Finally, when $i = 4$ we obtain
\begin{align*}
& \xi ^4 \cdot \nabla \frac{|v|^2}{2} - \eps _4 (\xi ^4 )^\prime \cdot \nabla ^\prime \frac{|\vp|^2}{2} \\
& =\{ \sigma \chi \} ^{1/2} \left \{- \frac{(\rp \cos \varphi - r) (v_3 - \vtp)r  + |z|^2 v_3}{|z| \sqrt{|z|^2 + (v_3 - \vtp)^2}} + \frac{(r \cos \varphi - \rp) (\vtp - v_3)\rp  + |z|^2 \vtp}{|z| \sqrt{|z|^2 + (v_3 - \vtp)^2}}     \right \} \\
& = \{ \sigma \chi \} ^{1/2}\frac{v_3 - \vtp}{\sqrt{|z|^2 + (v_3 - \vtp)^2}} \left [  r^2 + (\rp)^2 - 2 r \rp \cos \varphi \right ] = 0.
\end{align*}
Notice also that for any $i\in \{1,2,3,4\}$ we have
\[
\xi ^i \cdot \nabla v_3 - \eps _i (\xi ^i )^\prime \cdot \nabla ^\prime \vtp = 0
\]
saying that $\ave{\qfpl}$ satisfies 
\[
\intolxv{v_3 \ave{\qfpl}(f,f) } = 0.
\]
\end{proof}
\begin{remark}
\label{OtherBalance} By the formula \eqref{EquWeakForm} we deduce that the positive smooth functions $f$ satisfying $\ave{\qfpl}(f,f) = 0$ are those verifying
\begin{align}
\label{Equ111} 
\xi ^i \cdot \nabla \ln f - \eps _i (\xi ^i )^\prime \cdot \nabla ^\prime \ln f ^\prime = 0,\;\;i \in \{1,2,3,4\}.
\end{align}
In particular \eqref{Equ111} holds true for any Maxwellian $f$ which belongs to $\ker {\cal T}$, since in that case
\[
\ave{\qfpl}(f,f) = \ave{\qfpl (f,f)} = \ave{0} = 0.
\]
We deduce that 
\begin{align}
\label{Equ112} 
\xi ^i \cdot \nabla \varphi - \eps _i (\xi ^i )^\prime \cdot \nabla ^\prime \varphi  ^\prime = 0,\;\;i \in \{1,2,3,4\}
\end{align}
for any function $\varphi (x,v) = \alpha (x) |v|^2 + \beta (x) \cdot v + \gamma (x)$ satisfying ${\cal T} \varphi = 0$, and in particular for the functions
\[
x_1 + \frac{v_2}{\oc},\;x_2 - \frac{v_1}{\oc},\;x_3, \;v_3,\; |\olx|^2 + 2  \olx \cdot \frac{\polv}{\oc} = \left |\olx + \frac{\polv}{\oc} \right |^2 - \frac{|\olv|^2}{\oc ^2}.
\]
Notice that $\olx + \frac{\polv}{\oc}$ is the center of the circle obtained by projecting the Larmor circle $C_{x,v}$ onto $(x_1 ^\prime, x_2 ^\prime)$ and $|\olx|^2  + 2 \olx \cdot \frac{\polv}{\oc}$ is the power of the origin $(0,0)$ with respect to the same circle. 
We obtain, thanks to \eqref{EquWeakForm} and \eqref{Equ112}, the balances
\[
\intolxv{\left (\olx + \frac{\polv}{\oc} \right)  \ave{\qfpl}(f,f) } = (0,0)
\]
\[
\intolxv{\left ( |\olx|^2 + 2  \olx \cdot \frac{\polv}{\oc} \right ) \ave{\qfpl}(f,f) } = 0
\]
for any smooth function $f$.
\end{remark}
The previous identities allow us to establish the mass, momentum,  total energy, Larmor circle center and power conservations for smooth solutions of \eqref{Equ101}, \eqref{Equ102} coupled with the Poisson equation for the electric field
\begin{align}
\label{Equ113} E = - \nabla _x \varphi,\;\;\eps _0 \;\divx  E(t,x) = q \intv{f(t,x,v)} = : \rho (t,x),\;(t,x) \in \R_+ \times \R ^3.
\end{align}
\begin{thm}
\label{Conservation} Assume that $(f, E)$ is a smooth solution for
\[
\partial _t f + \frac{\ave{\polE}}{B} \cdot \gradolx f + v_3 \partial _{x_3} f + \frac{q}{m} \ave{E_3} \partial _{v_3} f = \ave{\qfpl} (f,f),\;\;{\cal T} f = 0,\;\;(t,x,v) \in \R_+ \times \R^3 \times \R^3
\]
\[
\eps _0 \divx E(t,x) = q \intv{f(t,x,v)},\;\;E = - \nabla _x \phi,\;\;(t,x) \in \R_+ \times \R ^3.
\]
Then we have
\[
\frac{\md }{\md t} \intxv{f} = 0,\;\;\frac{\md }{\md t} \intxv{\olx f} = 0,\;\;\frac{\md }{\md t} \intxv{v f} = 0
\]
\[
\frac{\md }{\md t} \left \{ \frac{\eps _0}{2} \int _{\R^3}{|E|^2\;\md x} + \intxv{m \frac{|v|^2}{2}f}\right \} = 0,\;\;\frac{\md }{\md t} \intxv{\left (\frac{|\olx|^2}{2} + \olx \cdot \frac{\polv}{\oc}   \right ) f} = 0.
\]
\end{thm}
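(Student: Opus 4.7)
The plan is to test the averaged kinetic equation
\begin{equation*}
\partial _t f + A \cdot \gradxv f = \ave{\qfpl}(f,f), \qquad A \cdot \gradxv = \frac{\ave{\polE}}{B}\cdot \gradolx + v_3 \partial _{x_3} + \frac{q}{m}\ave{E_3}\partial _{v_3},
\end{equation*}
against a well-chosen $\varphi(x,v)$. Since $\divxv A = 0$ (as verified in the proof of Proposition \ref{AveTra}), integration by parts yields the master identity
\begin{equation*}
\frac{\md}{\md t}\intxv{\varphi f} = \intxv{\varphi \;\ave{\qfpl}(f,f)} + \intxv{f \; A \cdot \gradxv \varphi}.
\end{equation*}
The collisional piece will be controlled by the invariants of Proposition \ref{ConserLaw} and Remark \ref{OtherBalance}, supplemented by the polar identity $\int _0^{2\pi}\olv\,\md\theta = 0$, which yields $\intxv{\olv \;\ave{\qfpl}}=0$ since $\ave{\qfpl}\in\ker{\cal T}$. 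The transport piece will be systematically reduced to Poisson identities using self-adjointness of $\ave{\cdot}$ on $L^2(\md x\md v)$, together with the observation that $\ave{fg(x)} = f\ave{g}$ whenever $f\in\ker{\cal T}$.

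For mass, $\varphi = 1$ makes both sides vanish. For $\intxv{\olx f}$ ($\varphi=\olx$), the polar identity gives $\intxv{\olx\;\ave{\qfpl}} = \intxv{\oly\;\ave{\qfpl}} = 0$ by Remark \ref{OtherBalance}, while the transport piece $\frac{1}{B}\intxv{f\ave{\polE}}$ equals $\frac{1}{B}\int \polE\rho\,\md x = 0$ by the classical Poisson identity $\int E_i\rho\,\md x=0$ (IBP using $E = -\nabla\phi$ and $\eps _0\divx E = q\rho$). For $\intxv{vf}$, the first two components are identically zero at all times by polar integration, while for $v_3$ the collisional term vanishes by Proposition \ref{ConserLaw} and the transport gives $\frac{q}{m}\int E_3\rho\,\md x = 0$.

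For the power, $\varphi = |\olx|^2/2 + \olx\cdot\polv/\oc$ fits the polynomial form $\alpha(x)|v|^2 + \beta(x)\cdot v + \gamma(x)$ of Remark \ref{OtherBalance} (with $\alpha=0$, $\beta=\polx/\oc$, $\gamma=|\olx|^2/2$) and satisfies ${\cal T}\varphi=0$, so $\intxv{\varphi\;\ave{\qfpl}}=0$. Since $A\cdot\gradxv\varphi = \ave{\polE}/B\cdot\oly$, the transport piece becomes, by self-adjointness and $f\oly\in\ker{\cal T}$, equal to $\frac{1}{B}\intxv{f\oly\cdot\polE} = \frac{1}{B}\int(x_1E_2 - x_2E_1)\rho\,\md x + \frac{1}{B\oc}\int\olE\cdot\int\olv f\,\md v\,\md x$. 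The first integral vanishes by a Poisson angular-momentum identity (two IBP), and the second vanishes thanks to $\divolx \int\olv f\,\md v = 0$, which I would derive from the constraint ${\cal T}f=0$ by rewriting $\olv\cdot\gradolx f = -\oc\polv\cdot\gradolv f$, integrating over $v$, and using $\divolv\polv = 0$.

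For total energy, $\varphi = m|v|^2/2$ yields $\frac{\md}{\md t}\intxv{m|v|^2f/2} = q\int E_3 j_3\,\md x$ with $j_3 = \int v_3 f\,\md v$, using $\intxv{m|v|^2/2\;\ave{\qfpl}}=0$ and the self-adjoint identity $\intxv{f\ave{E_3}v_3} = \intxv{fE_3v_3}$ (since $fv_3\in\ker{\cal T}$). The field energy $EE = \frac{q}{2}\int\phi\rho\,\md x$ evolves, by symmetry of $(-\Delta)^{-1}$, as $\frac{\md EE}{\md t} = q\int\phi\partial_t\rho\,\md x$; integrating the equation over $v$ yields the continuity identity $\partial_t\rho + \divx\tilde J = \int\ave{\qfpl}\,\md v$ with $\tilde J = (\int f\ave{\polE}/B\,\md v, j_3)$, and IBP combined with the key antisymmetry identity $\intxv{f\ave{\polE}\cdot\olE}=0$ (following from $\ave{\polE} = J\ave{\olE}$ with $J$ antisymmetric, plus self-adjointness) gives $\frac{\md EE}{\md t} = -q\int E_3 j_3\,\md x + q\intxv{\phi\;\ave{\qfpl}}$. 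The $j_3$-contributions cancel between $KE$ and $EE$, and the residual $q\intxv{\phi\;\ave{\qfpl}(f,f)}$ is the main obstacle: I would dispatch it by writing $\ave{\qfpl} = \divoxv(\cdots)$ from Proposition \ref{LFPKerT}, converting by IBP into an integral of $E$ against the inner flux (built from the vector fields $\xi^i$), and exploiting their structure together with Poisson's equation.
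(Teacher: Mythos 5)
Your treatment of the mass, momentum, center $\intxv{\olx f}$ and power invariants follows the paper's own route essentially step for step: the collisional contributions are killed by Proposition \ref{ConserLaw} and Remark \ref{OtherBalance} together with $\intxv{\polv\,g}=0$ for $g\in\ker{\cal T}$, and the transport contributions reduce, via $\intxv{f\ave{h}}=\intxv{fh}$ for $f\in\ker{\cal T}$, to the Poisson identities $\intx{E_i\,\divx E}=0$, $\intx{(x_1E_2-x_2E_1)\,\divx E}=0$ and $\divolx\intv{f\olv}=0$ (the latter obtained exactly as in the paper from ${\cal T}f=0$). All of that is sound, and your observation that $\divxv A=0$ gives the master identity is the same device as the paper's conservative form \eqref{Equ114}.

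The genuine gap is in the total energy. You correctly arrive at a residual $q\intxv{\phi\,\ave{\qfpl}(f,f)}$ and propose to ``dispatch'' it by integrating by parts against the flux of $\ave{\qfpl}$; but that only converts it into $\frac{q}{\oc}\intxv{\olE\cdot\Phi_{\olx}}$, where $\Phi_{\olx}$ is the perpendicular spatial component of the collisional flux, and there is no structural reason for this to vanish. Indeed, by \eqref{EquWeakForm} the quantity $\intolxv{\phi\,\ave{\qfpl}(f,f)}$ is guaranteed to vanish only when $\xi^i\cdot\nabla\phi-\eps_i(\xi^i)'\cdot\nabla'\phi'=0$ for each $i$, which by Remark \ref{OtherBalance} characterizes the collision invariants ($1$, $\olx+\polv/\oc$, $x_3$, $v_3$, $|v|^2$, the power) and not a general potential $\phi(x)$. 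So this step is not closed, and the sketched plan would not close it. It is worth noting that the paper's own proof never meets this term because it writes the continuity equation $\partial_t\intv{f}+\divolx\intv{f\ave{\polE}/B}+\partial_{x_3}\intv{fv_3}=0$ \emph{without} a collisional source, i.e.\ it implicitly treats $\intv{\ave{\qfpl}(f,f)}$ as zero pointwise in $x$, whereas the averaged operator balances mass only globally in $(\olx,v)$ (it is the $\divoxv$ of a flux with a nontrivial $\olx$-component, cf.\ Remark \ref{NonLocal}). You have therefore correctly located the crux of the energy balance, but your proposal does not resolve it: to finish one must either prove $\intxv{\olE\cdot\Phi_{\olx}}=0$ or accept the statement at the formal level at which the paper works.
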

\begin{proof}
The mass conservation comes by the conservative formulation of the Vlasov equation, which writes
\begin{align}
\label{Equ114} 
\partial _t f + \divolx \left \{ f \frac{\ave{\polE}}{B}  \right \} + \partial _{x_3} \{ fv_3\} + \frac{q}{m} \partial _{v_3} \{f \ave{E_3}  \} = \ave{\qfpl} (f,f).
\end{align}
Multiplying \eqref{Equ114} by $v$ and integrating with respect to $(x,v)$ yield
\[
\frac{\md }{\md t} \intxv{v f} - \intxv{\frac{q}{m} f \ave{E_3}e_3 } = 0
\]
since for functions in $\ker {\cal T}$ we can write
\begin{align*}
\intxv{v \ave{\qfpl} (f,f)} & = \intxv{v \ave{\qfpl (f,f)}} = \intxv{\ave{v} \qfpl (f,f) } \\
& = \intxv{(0,0,v_3) \qfpl (f,f)} = 0.
\end{align*}
Using the Poisson equation and the identity
\begin{align}
\label{Equ115} \divx E \;E = \divx ( E \otimes E ) - \frac{1}{2} \nabla _x |E|^2 
\end{align}
we obtain, taking into account that $f \in \ker {\cal T}$
\begin{align*}
\intxv{\frac{q}{m}f \ave{E_3}} & = \frac{q}{m} \intxv{f E_3} = \frac{\eps _0}{m}\intx{E_3 \;\divx E} \\
& = \frac{\eps _0}{m}\intx{\left \{ \divx (E_3E ) - \frac{1}{2}\partial _{x_3} |E|^2\right \}} = 0
\end{align*}
and thus $\frac{\md}{\md t} \intxv{\;vf} = 0$. Multiplying \eqref{Equ114} by $\frac{m|v|^2}{2}$ and integrating with respect to $(x,v)$ yield
\[
\frac{\md }{\md t} \intxv{\frac{m|v|^2}{2}f }- \intxv{q f v_3 \ave{E_3}} = 0.
\]
Thanks to the continuity equation
\[
\partial _t \intv{f} + \divolx \intv{f \frac{\ave{\polE}}{B}} + \partial _{x_3} \intv{f v_3} = 0
\]
we have
\begin{align*}
& - q \intxv{f v_3 \ave{E_3}}  = - q \intxv{f v_3 {E_3}} =  q \intxv{f v_3 \partial _{x_3} \phi } \\
& = - q \intx{\phi \;\partial _{x_3} \intv{f v_3 }} 
= \intx{\phi \left \{\partial _t \rho + q\;\divolx\intv{f \frac{\ave{\polE}}{B}}    \right \}}\\
& = \intx{\phi \;\partial _t ( \eps _0 \divx E)} - q \;\intxv{\nabla _x \phi \cdot \frac{\ave{\polE}}{B}f } \\
& = - \intx{\nabla _x \phi \cdot \partial _t ( \eps _0  E)} + q \;\intx{\ave{E} \cdot \frac{\ave{\polE}}{B}f}\\
& = \frac{\eps _0}{2} \frac{\md }{\md t} \intx{|E|^2}
\end{align*}
implying 
\[
\frac{\md }{\md t} \left \{ \frac{\eps _0}{2} \int _{\R^3}{|E|^2\;\md x} + \intxv{m \frac{|v|^2}{2}f}\right \} = 0.
\]
By Remark \ref{OtherBalance} we know that
\begin{align*}
\intxv{\olx \ave{\qfpl}(f,f)} & = \intxv{\left ( \olx + \frac{\polv}{\oc} \right ) \ave{\qfpl}(f,f)} \\
& - \intxv{\frac{\polv}{\oc}\ave{\qfpl}(f,f)}= 0
\end{align*}
and therefore, multiplying \eqref{Equ114} by $\olx$ one gets
\[
\frac{\md }{\md t} \intxv{f \olx} - \intxv{f\frac{\ave{\polE}}{B}} = 0.
\]
Appealing to the Poisson equation we have
\begin{align*}
q\intxv{f \ave{\olE}} & = q\intxv{f {\olE}} = \int{\rho \olE } = \eps _0 \intx{\divx E \olE } = \\
& = \eps _0 \intx{\left (\divx (\olE \otimes E ) - \frac{1}{2} \gradolx |E|^2   \right )} = 0
\end{align*}
and therefore $\frac{\md}{\md t} \intxv{\;f \olx } = 0$. In particular the mean Larmor circle center is left invariant
\[
\frac{\md}{\md t} \left \{\frac{\intxv{\;f \left (\olx + \frac{\polv}{\oc}   \right ) }}{\intxv{\;f}}    \right \} = 0.
\]
By Remark \ref{OtherBalance} we know that 
\[
\intxv{\left (\oc ^2 |\olx|^2 +2 \oc  \olx \cdot \polv \right )\ave{\qfpl}(f,f) } = 0 
\]
and for any $\psi (x) \in C^1 _c (\R^3)$ we can write
\begin{align*}
\intx{\psi (x) \;\divolx \intv{f \olv }} & = - \intxv{f \;\olv \cdot \gradolx \psi } \\
& = - \intxv{f \;{\cal T} \psi } = \intxv{\psi \;{\cal T}f  } = 0
\end{align*}
saying that $\divolx \intv{f \olv} = 0$. Therefore we deduce 
\begin{align*}
& \frac{\md }{\md t} \intxv{\left (\frac{|\olx|^2}{2} + \olx \cdot \frac{\polv}{\oc}    \right ) f}  =\intxv{f \frac{\ave{\polE}}{B} \cdot \left ( \olx + \frac{\polv}{\oc} \right )} \\
& = \intxv{f \frac{{\polE}}{B} \cdot \left (  \olx + \frac{\polv}{\oc} \right ) } = - \intxv{f \frac{{\olE}}{B} \cdot  \polx } + \intx{\frac{\phi (x)}{B \oc} \divolx \intv{\!\!f \olv }} \\
& = - \frac{\eps _0}{qB} \intx{\polx \cdot \olE \;\divx E } = - \frac{\eps _0}{qB} \intx{\polx \cdot \left (\divx (\olE \otimes E ) - \frac{1}{2} \gradolx |E|^2   \right )}\\
& = - \frac{\eps _0}{qB} \intx{\left \{x_2 \sum _{j = 1} ^3 \partial _{x_j} (E_1 E_j) - x_1 \sum _{j = 1} ^3 \partial _{x_j} (E_2 E_j)\right \}} \\
& = \frac{\eps _0}{qB} \intx{\sum _{j = 1} ^3  \left \{ \frac{\partial {x_2}}{\partial {x_j}} E_1 E_j - \frac{\partial {x_1}}{\partial {x_j}} E_2 E_j \right \}
}\\
&= 0
\end{align*}
implying that the mean Larmor circle power (with respect to the origin) is left invariant.
\end{proof}

%%%%%%%%%%%%%%%%%%%%%%%%%%%%%%%%%%%%

\appendix
\section{Proof of Theorem \ref{MainResultFP}}
\label{FP}
\begin{proof}
The \fp kernel being a second order differential operator, we appeal twice to Proposition \ref{ComDivAve}. For any $f \in \ker {\cal T}$, taking $\xi _v = M \nabla _v \left ( f/M\right )$ yields
\begin{align}
\label{EquFP1}
& \ave{\divv \left ( M \nabla _v \ffm \right ) } \\
&  = \divoolx \left \{\ave{M\; {^\perp \nabla_{\olv}} \ffm } + \ave{M \;\nabla _\olv \ffm \cdot \frac{\polv}{|\olv|}} \frac{\olv}{|\olv|}  - \ave{M \;\nabla _\olv \ffm \cdot \frac{\olv}{|\olv|}} \frac{\polv}{|\olv|}\right \} \nonumber \\
& + \divolv \left \{\ave{M \;\nabla _\olv \ffm \cdot \frac{\polv}{|\olv|}} \frac{\polv}{|\olv|} + \ave{M \;\nabla _\olv \ffm \cdot \frac{\olv}{|\olv|}} \frac{\olv}{|\olv|}\right \}  + \partial _{v_3} \ave{ M \partial _{v_3} \ffm }.\nonumber
\end{align}
Since $\partial _{v_3}$ commutes with $\ave{\cdot}$ (cf. Proposition \ref{ComDerAve}) we deduce
\[
\partial _{v_3} \ave{ M \partial _{v_3} \ffm } = \partial _{v_3} \left \{M \ave{\partial _{v_3} \ffm}    \right \} = \partial _{v_3} \left \{M \partial _{v_3} \ave{\frac{f}{M}}   \right \} = \partial _{v_3} \left \{M \partial _{v_3} \ffm   \right \}.
\]
It remains to compute the averages
\[
\ave{M \; {^\perp \nabla} _\olv \ffm },\;\;\ave{ M \; \nabla _\olv \ffm \cdot \frac{\polv}{|\olv|}},\;\;\ave{M \;  \nabla _\olv \ffm \cdot \frac{\olv}{|\olv|}}.
\]
These averages come easily, thanks to Proposition \ref{ComDivAve}, observing that
\[
\partial _{v_1} \ffm = \divv \left ( \frac{f}{M}, 0, 0\right ), \;\;\partial _{v_2} \ffm = \divv \left ( 0, \frac{f}{M}, 0\right )
\]
\[
\nabla _\olv \ffm \cdot \polv = \divolv \left ( \frac{f}{M} \polv \right ),\;\;\nabla _\olv \ffm \cdot \olv = \divolv \left ( \frac{f}{M} \olv \right ) - 2 \frac{f}{M}.
\]
We obtain
\begin{align}
\label{EquFP2} \ave{M \; {^\perp \nabla} _\olv \ffm } = M \;\nabla _{\omega _c \olx} \ffm
\end{align}
\begin{align}
\label{EquFP3} \ave{ M \; \nabla _\olv \ffm \cdot \frac{\polv}{|\olv|}} = M \;\olvpolv \cdot \nabla _{\omega _c x, v} \ffm
\end{align}
\begin{align}
\label{EquFP4} \ave{ M \; \nabla _\olv \ffm \cdot \frac{\olv}{|\olv|}} = - M \;\polvolv \cdot \nabla _{\omega _c x, v} \ffm.
\end{align}
Our conclusion follows by combining \eqref{EquFP1}, \eqref{EquFP2}, \eqref{EquFP3}, \eqref{EquFP4}. The diffusion matrix ${\cal L}$ is positive and for any $\xi = (\xi _x, \xi _v) \in \R^6$ we have
\[
{\cal L} \xi \cdot \xi = |\xi _\olx |^2 + |\xi _\olx - {^\perp \xi }_\olv |^2 + (\xi _{v_3} )^2 \geq 0
\]
with equality iff $\xi _\olx = \xi _\olv =(0, 0)$ and $\xi _{v_3} = 0$.
\end{proof}

\section{Proofs of Propositions \ref{SecondFormula}, \ref{ThirdFormula}}
\label{A}
\begin{proof} (of Proposition \ref{SecondFormula})
We follow the same arguments as those in the proof of Proposition \ref{FirstFormula}. The details are left to the reader.\\
1. Using cylindrical coordinates we obtain 
\begin{align}
J & :=\ave{\intvp{D(v,\vp)f(x,\vp)}} = 
\avetpi \int _\R \int _{-\pi} ^\pi \int _{\R_+} D(|\olv| e ^{i\alpha}, v_3, \rp e ^{i(\varphi + \alpha)}, \vtp) \nonumber \\
& \times f \left (\olx + \frac{\polv}{\oc} - \frac{^\perp \{|\olv| e ^{i\alpha}\}}{\oc}, x_3, \rp e ^{i (\varphi + \alpha)},\vtp   \right )\rp \md \rp \md \varphi \md \vtp \md \alpha \nonumber \\
& = \avetpi \int _\R \int _{-\pi} ^\pi \int _{\R_+} D(|\olv| e ^{i\alpha}, v_3, \rp e ^{i(\varphi + \alpha)}, \vtp) \nonumber \\
& \times g \left (\olx + \frac{\polv}{\oc} - \frac{^\perp \{ |\olv| e ^{i \alpha}\}}{\oc} + \frac{^\perp \{\rp  e ^{i (\varphi + \alpha)}  \}}{\oc}, x_3, \rp, \vtp   \right )
\rp \md \rp \md \varphi \md \vtp \md \alpha 
\end{align}
where in the last equality we have used the constraint $f \in \ker {\cal T}$ {\it i.e.}, there is $g$ such that
\[
f(x,v) = g\left ( \olx + \frac{\polv}{\oc}, x_3, |\olv|, v_3\right ),\;\;(x,v) \in \R ^3 \times \R ^3.
\]
We have $\rp  e ^{i (\varphi + \alpha)} - |\olv| e ^{i \alpha} = l  e ^{i (\psi + \alpha)}$ where $l^2 = r ^2 + (\rp )^2 - 2 r \rp \cos \varphi $, $r = |\olv|$ and $(\rp)^2 = r^2 + l^2 + 2 r l \cos \psi$. Notice that $\psi \in (0,\pi)$ if $\varphi \in (0,\pi)$ and $\psi \in (-\pi, 0)$ if $\varphi \in (-\pi, 0)$. Also $\psi = \psi (\varphi)$ is odd with respect to $\varphi$ that is $\psi (- \varphi) = - \psi (\varphi)$. 
By hypothesis we deduce that 
\[
D(re ^{i\alpha}, v_3, \rp  e ^{i (\varphi + \alpha)}, \vtp) = \tD (r, v_3, \rp, \vtp, \varphi)\;r e ^{i\alpha} + \tDp (r, v_3, \rp, \vtp, \varphi)  \;\rp e ^{i (\varphi + \alpha)}
\]
and thus, if we denote by $R(\alpha)$ the rotation of angle $\alpha$ in $\R^2$ one gets
\begin{align*}
J & =\avetpi \int _\R \int _{-\pi} ^\pi \int _{\R_+} R(\alpha + \psi) [\tD (r, v_3, \rp, \vtp, \varphi)\;r e ^{-i\psi} + \tDp (r, v_3, \rp, \vtp, \varphi)  \;\rp e ^{i (\varphi -\psi)}  ] \nonumber \\
& \times g \left (\olx + \frac{\polv}{\oc} + \frac{^\perp \{ l e ^{i (\alpha + \psi)}\}}{\oc}, x_3, \rp, \vtp   \right )
\rp \md \rp \md \varphi \md \vtp \md \alpha \\
= & \avetpi \int _\R \int _{-\pi} ^\pi \int _{\R_+} R(\alpha ) [\tD (r, v_3, \rp, \vtp, \varphi)\;r e ^{-i\psi} + \tDp (r, v_3, \rp, \vtp, \varphi)  \;\rp e ^{i (\varphi -\psi)}  ] \nonumber \\
& \times g \left (\olx + \frac{\polv}{\oc} + \frac{^\perp \{ l e ^{i \alpha }\}}{\oc}, x_3, \rp, \vtp   \right )
\rp \md \rp \md \varphi \md \vtp \md \alpha.
\end{align*}
Using the symmetry of $\psi$ with respect to $\varphi$ and changing $\varphi$ against $l$ yield
\begin{align*}
J & = \avetpi \int _\R \int _{0} ^\pi \int _{\R_+} R(\alpha ) [\tD (r, v_3, \rp, \vtp, \varphi)\;r e ^{-i\psi} + \tD (r, v_3, \rp, \vtp, -\varphi)\;r e ^{i\psi} \\
&  + \tDp (r, v_3, \rp, \vtp, \varphi)  \;\rp e ^{i (\varphi -\psi)} + \tDp (r, v_3, \rp, \vtp, - \varphi)  \;\rp e ^{-i (\varphi -\psi)} ] \nonumber \\
& \times g \left (\olx + \frac{\polv}{\oc} + \frac{^\perp \{ l e ^{i \alpha }\}}{\oc}, x_3, \rp, \vtp   \right )
\rp \md \rp \md \varphi \md \vtp \md \alpha \\
& = \frac{1}{\pi} \int _0 ^{2\pi}\!\!\!\!  \int _\R \int _{|r - \rp|} ^{(r + \rp)} \int _{\R_+} R(\alpha ) [\tD (r, v_3, \rp, \vtp, \varphi)\;r e ^{-i\psi} + \tD (r, v_3, \rp, \vtp, -\varphi)\;r e ^{i\psi} \\
&  + \tDp (r, v_3, \rp, \vtp, \varphi)  \;\rp e ^{i (\varphi -\psi)} + \tDp (r, v_3, \rp, \vtp, - \varphi)  \;\rp e ^{-i (\varphi -\psi)} ] \nonumber \\
& \times g \left (\olx + \frac{\polv}{\oc} + \frac{^\perp \{ l e ^{i \alpha }\}}{\oc}, x_3, \rp, \vtp   \right )
\frac{\rp \md \rp \;l  \md l  \;\md \vtp \md \alpha}{\sqrt{l^2 - (r - \rp)^2\;} \sqrt{(r + \rp ) ^2 - l^2}}.
\end{align*}
Notice that $R(\alpha) = e_1 \otimes e ^{-i \alpha} - e_2 \otimes {^\perp e ^{-i \alpha}}$ and for any $\alphap \in [0,2\pi)$ we have
\[
g \left (\olx + \frac{\polv}{\oc} + \frac{^\perp \{l e ^{i \alpha}\}}{\oc}, x_3, \rp, \vtp\right ) = f \left (\olx + \frac{\polv}{\oc} + \frac{^\perp \{l e ^{i \alpha}\}}{\oc} - \frac{^\perp \{\rp e ^{i \alphap}\}}{\oc}, x_3, \rp e ^{i \alphap}, \vtp   \right ).
\]
Performing the change of coordinates $\vp = (\rp e ^{i \alphap}, \vtp)$ and $- z = {^\perp \{ l e ^{i \alpha}\}}$ leads to
\begin{align*}
J & = \frac{1}{2\pi ^2} \inttpi \int _{\R_+} \int _{\R} \inttpi \int _{\R_+} \{e_1 \otimes e ^{-i \alpha} - e_2 \otimes {^\perp e ^{-i \alpha}}\}[
\tD ( \varphi)\;r e ^{-i\psi} + \tD ( -\varphi)\;r e ^{i\psi} \\
&  + \tDp (\varphi)  \;\rp e ^{i (\varphi -\psi)} + \tDp ( - \varphi)  \;\rp e ^{-i (\varphi -\psi)} 
] \\
& \times f \left (\olx + \frac{\polv}{\oc} + \frac{^\perp \{l e ^{i \alpha}\}}{\oc} - \frac{^\perp \{\rp e ^{i \alphap}\}}{\oc}, x_3, \rp e ^{i \alphap}, \vtp   \right )\chi (r, \rp, - {^\perp \{l e ^{i\alpha}\}})\rp \md \rp \md \alphap \md \vtp l \md l \md \alpha\\
& = \int _{\R^2} \int _{\R^3} {\cal D}( |\olv|, v_3, |\olvp|, \vtp, z)f \left (\olx + \frac{\polv}{\oc} -\frac{z}{\oc} - \frac{\polvp}{\oc}, x_3, \vp   \right )\;\md \vp \md z \\
& = \oc ^2 \intolxpvp{{\cal D}( |\olv|, v_3, |\olvp|, \vtp, (\oc \olx + \polv) - (\oc \olxp + \polvp))f(x_1^\prime, x_2 ^\prime, x_3, \vp)}.
\end{align*}
The statement in 2. follows similarly.
\end{proof}
\begin{proof} (of Proposition \ref{ThirdFormula}) 
Observe that 
\[
\ave{f}_{\sigma S} = \ave{\intvp{f(x,\vp) \sigma (|v - \vp|)}}\;I - \ave{\intvp{f(x,\vp) \sigma (|v - \vp|)\frac{(v - \vp ) \otimes (v - \vp)}{|v - \vp|^2}}}.
\]
Applying Proposition \ref{FirstFormula} with $C = 1$ one gets $\ave{\intvp{f \sigma}} = \oc ^2 \intolxpvp{\;\sigma f \chi}$. It remains to compute the second average. Using cylindrical coordinates and the constraint ${\cal T } f = 0$ {\it i.e.,} $f(x,v) = g \left ( \olx + \frac{\polv}{\oc}, x_3, |\olv|, v_3\right )$ we obtain 
\begin{align*}
K & :=\ave{\intvp{f(x,\vp)\sigma (|v - \vp|)\frac{(v - \vp ) \otimes (v - \vp)}{|v - \vp|^2}}} \\
& = 
\avetpi \int _\R \int _{-\pi} ^\pi \int _{\R_+} \frac{\sigma(\;| \;(|\olv| e ^{i \alpha} - \rp e ^{i (\varphi + \alpha )}, v_3 - \vtp     )\;|\;)}{|\; (|\olv| e ^{i \alpha} - \rp e ^{i (\varphi + \alpha )}, v_3 - \vtp     )\;|^2}\left (|\olv| e ^{i \alpha} - \rp e ^{i (\varphi + \alpha )}, v_3 - \vtp    \right ) ^{\otimes 2}
\\
& \times f \left (\olx + \frac{\polv}{\oc} - \frac{^\perp \{|\olv| e ^{i\alpha}\}}{\oc}, x_3, \rp e ^{i (\varphi + \alpha)},\vtp   \right )\rp \md \rp \md \varphi \md \vtp \md \alpha  \\
& = \avetpi \int _\R \int _{-\pi} ^\pi \int _{\R_+}
\frac{\sigma(\;| \;(|\olv| e ^{i \alpha} - \rp e ^{i (\varphi + \alpha )}, v_3 - \vtp     )\;|\;)}{|\; (|\olv| e ^{i \alpha} - \rp e ^{i (\varphi + \alpha )}, v_3 - \vtp     )\;|^2}\left (|\olv| e ^{i \alpha} - \rp e ^{i (\varphi + \alpha )}, v_3 - \vtp    \right ) ^{\otimes 2}\\
& \times g \left (\olx + \frac{\polv}{\oc} - \frac{^\perp \{ |\olv| e ^{i \alpha}\}}{\oc} + \frac{^\perp \{\rp  e ^{i (\varphi + \alpha)}  \}}{\oc}, x_3, \rp, \vtp   \right )
\rp \md \rp \md \varphi \md \vtp \md \alpha.
\end{align*}
We introduce $l, \psi $ such that 
$\rp  e ^{i (\varphi + \alpha)} - |\olv| e ^{i \alpha} = l  e ^{i (\psi + \alpha)}$. We have the relations $l^2 = r ^2 + (\rp )^2 - 2 r \rp \cos \varphi $, $r = |\olv|$ and $(\rp)^2 = r^2 + l^2 + 2 r l \cos \psi$. Since $l, \psi$ are not depending on $\alpha$ one gets
\begin{align*}
K & =\avetpi \int _\R \int _{-\pi} ^\pi \int _{\R_+}  
\frac{\sigma(\sqrt{l^2 +( v_3 - \vtp     )^2 }\;)}{l^2 +( v_3 - \vtp     )^2}\left (l e ^{i (\alpha + \psi)},  \vtp - v_3    \right ) ^{\otimes 2}
\\
& \times g \left (\olx + \frac{\polv}{\oc} + \frac{^\perp \{ l e ^{i (\alpha + \psi)}\}}{\oc}, x_3, \rp, \vtp   \right )
\rp \md \rp \md \varphi \md \vtp \md \alpha \\
= & \avetpi \int _\R \int _{-\pi} ^\pi \int _{\R_+}
\frac{\sigma(\sqrt{l^2 +( v_3 - \vtp     )^2 }\;)}{l^2 +( v_3 - \vtp     )^2}\left (l e ^{i \alpha },  \vtp - v_3    \right ) ^{\otimes 2}
\\
& \times g \left (\olx + \frac{\polv}{\oc} + \frac{^\perp \{ l e ^{i \alpha }\}}{\oc}, x_3, \rp, \vtp   \right )
\rp \md \rp \md \varphi \md \vtp \md \alpha.
\end{align*}
Since $l$ is even with respect to $\varphi$ we obtain, after changing  $\varphi $ against $l$
\begin{align*}
K & = \frac{1}{\pi} \int _0 ^{2\pi}\!\! \!\!\int _\R \int _{0} ^\pi \int _{\R_+} \frac{\sigma(\sqrt{l^2 +( v_3 - \vtp     )^2 }\;)}{l^2 +( v_3 - \vtp     )^2}\left (l e ^{i \alpha },  \vtp - v_3    \right ) ^{\otimes 2}\\
& \times g \left (\olx + \frac{\polv}{\oc} + \frac{^\perp \{ l e ^{i \alpha }\}}{\oc}, x_3, \rp, \vtp   \right )
\rp \md \rp \md \varphi \md \vtp \md \alpha \\
& = \frac{2}{\pi} \int _0 ^{2\pi}\!\!\!\!  \int _\R \int _{|r - \rp|} ^{(r + \rp)} \int _{\R_+} 
\frac{\sigma(\sqrt{l^2 +( v_3 - \vtp     )^2 }\;)}{l^2 +( v_3 - \vtp     )^2}\left (l e ^{i \alpha },  \vtp - v_3    \right ) ^{\otimes 2}\\
& \times g \left (\olx + \frac{\polv}{\oc} + \frac{^\perp \{ l e ^{i \alpha }\}}{\oc}, x_3, \rp, \vtp   \right )
\frac{\rp \md \rp \;l  \md l  \;\md \vtp \md \alpha}{\sqrt{l^2 - (r - \rp)^2\;} \sqrt{(r + \rp ) ^2 - l^2}}.
\end{align*}
For any $\alphap \in [0,2\pi)$ we have
\[
g \left (\olx + \frac{\polv}{\oc} + \frac{^\perp \{l e ^{i \alpha}\}}{\oc}, x_3, \rp, \vtp\right ) = f \left (\olx + \frac{\polv}{\oc} + \frac{^\perp \{l e ^{i \alpha}\}}{\oc} - \frac{^\perp \{\rp e ^{i \alphap}\}}{\oc}, x_3, \rp e ^{i \alphap}, \vtp   \right ).
\]
Performing the change of coordinates $\vp = (\rp e ^{i \alphap}, \vtp)$ and $ z = -{^\perp \{ l e ^{i \alpha}\}}$ leads to
\begin{align*}
K & =  \inttpi \int _{\R_+} \int _{\R} \inttpi \int _{\R_+} \frac{\sigma(\sqrt{l^2 +( v_3 - \vtp     )^2 }\;)}{l^2 +( v_3 - \vtp     )^2}\left (l e ^{i \alpha },  \vtp - v_3    \right ) ^{\otimes 2}\\
& \times f \left (\olx + \frac{\polv}{\oc} + \frac{^\perp \{l e ^{i \alpha}\}}{\oc} - \frac{^\perp \{\rp e ^{i \alphap}\}}{\oc}, x_3, \rp e ^{i \alphap}, \vtp   \right )\chi (r, \rp, - {^\perp \{l e ^{i\alpha}\}})\rp \md \rp \md \alphap \md \vtp l \md l \md \alpha\\
& = \int _{\R^2} \int _{\R^3} \frac{\sigma (\sqrt{|z|^2 + (v _3 - \vtp)^2}\;)}{|z|^2 + (v _3 - \vtp)^2} ( \;{^\perp z}, \vtp - v_3 ) ^{\otimes 2}
 f \left (\olx + \frac{\polv}{\oc} -\frac{z}{\oc} - \frac{\polvp}{\oc}, x_3, \vp   \right )\chi \;\md \vp \md z \\
& = \oc ^2 \intolxpvp{\sigma(\sqrt{|z|^2 + (v _3 - \vtp)^2}\;)f (x_1 ^\prime, x_2 ^\prime, x_3, \vp) \;\chi \; \frac{ ( \;{^\perp z}, \vtp - v_3 ) ^{\otimes 2}}{|z|^2 + (v _3 - \vtp)^2}}.
\end{align*}
Finally we obtain
\[
\ave{f}_{\sigma S} = \oc ^2 \intolxpvp{\sigma(\sqrt{|z|^2 + (v _3 - \vtp)^2}\;)f (x_1 ^\prime, x_2 ^\prime, x_3, \vp) \;\chi \;S( \;({^\perp z}, \vtp - v_3 )\;)}.
\]
\end{proof}

\section{Proofs of Propositions \ref{GainLFP}, \ref{LossLFP}}
\label{B}

\begin{proof} (of Proposition \ref{GainLFP}) 
Let us introduce the notation
\[
\xi _v(x,v) = \intvp{\sigma (|v - \vp|) S(v - \vp)f (x, \vp) \nabla _v f (x,v)}.
\]
Thanks to Proposition \ref{ComDivAve} we have
\begin{align*}
\ave{\qfpl ^+ (f,f)} & = \ave{\divv \;\xi _v} = \frac{1}{\oc} \divolx {\left \{ \ave{{^\perp \xi _\olv}} + \ave{\xi _\olv \cdot \frac{\polv}{|\olv|}} \frac{\olv}{|\olv|} - 
\ave{\xi _\olv \cdot \frac{\olv}{|\olv|}} \frac{\polv}{|\olv|}  \right \}}  \\
& + \divolv\left \{\ave{\xi _\olv \cdot \frac{\polv}{|\olv|}} \frac{\polv}{|\olv|} + 
\ave{\xi _\olv \cdot \frac{\olv}{|\olv|}} \frac{\olv}{|\olv|}
\right \} + \partial _{v_3} \ave{\xi _{v_3}}.
\end{align*}
We need to compute $\ave{\xi _\olv}, \ave{\xi _\olv \cdot \olv}, \ave{\xi _\olv \cdot \polv}$. By Proposition \ref{Field} we know that 
$\sum _{i = 0} ^5 b ^i \otimes \gradxv \psi _i = I$ and thus
\begin{align*}
\partial _{v_1} f = \sum _{i = 0} ^5 \partial _{v_1} \psi _i \;b ^i \cdot \gradxv f = \frac{v_2}{\oc |\olv|^2} \;b ^0 \cdot \gradxv f - \frac{1}{\oc} b ^2 \cdot \gradxv f + \frac{v_1}{|\olv|} \;b ^4 \cdot \gradxv f.
\end{align*} 
Similarly we have
\begin{align*}
\partial _{v_2} f = \sum _{i = 0} ^5 \partial _{v_2} \psi _i \;b ^i \cdot \gradxv f = -\frac{v_1}{\oc |\olv|^2} \;b ^0 \cdot \gradxv f + \frac{1}{\oc} b ^1 \cdot \gradxv f + \frac{v_2}{|\olv|} \;b ^4 \cdot \gradxv f
\end{align*} 
leading to
\[
\nabla _v f = b ^0 \cdot \gradxv f\;\frac{(\polv, 0)}{\oc |\olv|^2} + \left (- \frac{^\perp \gradolx f}{\oc}, \partial _{v_3} f     \right ) + b ^4 \cdot \gradxv f\;\frac{(\olv, 0)}{ |\olv|}. 
\]
Taking into account that all derivations $b^i \cdot \gradxv, 0\leq i \leq 5$ leave invariant $\ker {\cal T}$, cf. Proposition \ref{Field}, we obtain 
\begin{align*}
& \ave{\xi _v}   = \ave{f, (\polv, 0)}_{\sigma S} \frac{b ^0 \cdot \gradxv f}{\oc |\olv|^2} + \ave{f}_{\sigma S} \left (- \frac{^\perp \gradolx f}{\oc}, \partial _{v_3} f     \right ) + \ave{f, (\olv, 0)}_{\sigma S} \frac{b ^4 \cdot \gradxv f}{ |\olv|} \\
& =  \left \{  \frac{\ave{f, (\polv, 0)}_{\sigma S} }{|\olv|}\otimes \left ( \frac{(\olv,0)}{|\olv|}, \frac{(\polv, 0)}{|\olv|}   \right )  
- \frac{\ave{f, (\olv, 0)}_{\sigma S} }{|\olv|} \otimes \left ( \frac{(\polv,0)}{|\olv|}, -\frac{(\olv, 0)}{|\olv|}   \right ) \right \}\gradoxv f \\
& -  \ave{f}_{\sigma S} (E, - e_3 \otimes e_3)\gradoxv f 
\end{align*}
where the lines of the matrix $E \in {\cal M}_3 (\R)$ are $e_2, - e_1, 0$. Similarly, thanks to the identities $\ave{f, (\olv, 0), (\polv, 0)}_{\sigma S}  = \ave{f, (\polv, 0), (\olv, 0)}_{\sigma S} = 0$ we obtain
\begin{align*}
& \ave{\xi _\olv \cdot \frac{\olv}{|\olv|}}  = \ave{\xi _v \cdot \frac{(\olv, 0)}{|\olv|}} \\
& = - \frac{\ave{f, (\olv, 0)}_{\sigma S}}{|\olv|}\cdot \left (  \frac{^\perp \gradolx f}{\oc}, -\partial _{v_3} f \right ) 
+ \frac{\avess{f, (\olv,0), (\olv, 0)}}{|\olv |^2} \;b ^4 \cdot \gradxv f \\
& = \left \{-\frac{\ave{f, (\olv, 0), (\olv, 0)}_{\sigma S}}{|\olv|^2}  \left (\frac{(\polv, 0)}{|\olv|},-\frac{(\olv, 0)}{|\olv|}   \right )
+ \left ( E \frac{\avess{f, (\olv, 0)}}{|\olv|}, e_3 \otimes e_3 \frac{\avess{f, (\olv, 0)}}{|\olv|}\right ) 
\right \}\\
& \cdot \gradoxv f
\end{align*}
and
\begin{align*}
& \ave{\xi _\olv \cdot \frac{\polv}{|\olv|}}  = \ave{\xi _v \cdot \frac{(\polv, 0)}{|\olv|}} \\
& =  \frac{\avess{f, (\polv,0), (\polv, 0)}}{|\olv |^2} \;\frac{b ^0 \cdot \gradxv f}{\oc |\olv|} 
- \frac{\ave{f, (\polv, 0)}_{\sigma S}}{|\olv|}\cdot \left (  \frac{^\perp \gradolx f}{\oc}, -\partial _{v_3} f \right )   \\
& = 
\left \{\frac{\ave{f, (\polv, 0), (\polv, 0)}_{\sigma S}}{|\olv|^2}  \left (\frac{(\olv, 0)}{|\olv|},\frac{(\polv, 0)}{|\olv|}   \right )
+ \left ( E \frac{\avess{f, (\polv, 0)}}{|\olv|}, 0 \right ) 
\right \}\cdot \gradoxv f.
\end{align*}
In the last equality we have taken into account that
\[ 
e_3 \otimes e_3 \frac{\avess{f, (\polv, 0)}}{|\olv|} = 0.
\]
Clearly $\ave{\qfpl ^+ (f, f)}$ has the form in \eqref{Equ70} with
$
A^+ = \left (
\begin{array}{rrr}
A_{xx} ^+  &  A_{xv} ^+\\
A_{vx}^+  &\;\;\;   A_{vv}^+
\end{array}
\right )
$
where
\begin{align}
\label{Equ72}
(A_{xx} ^+, A_{xv} ^+)  & = \frac{r - \rp \cos \varphi }{|z|} \;\frac{({^\perp z}, 0)}{|z|} \otimes  \olvpolv \nonumber \\
& - 
\frac{(r - \rp \cos \varphi ) (v_3 - \vtp )^2}{|z|[\;|z|^2 + (v_3 - \vtp)^2\;]}\;\frac{(z,0)}{|z|} \otimes \polvolv \nonumber \\
& + \left ( 1 - \frac{(\rp )^2 \sin ^2 \varphi }{|z|^2 + (v_3 - \vtp)^2}  \right )  \frac{(\olv, 0)}{|\olv|} \otimes \olvpolv \nonumber \\
& + \frac{r - \rp \cos \varphi }{|z|} \;\frac{(\olv, 0)}{|\olv|} \otimes \left (\frac{({^\perp z}, 0)}{|z|}, 0 \right ) \nonumber \\
& +  \left ( 1 - \frac{(r - \rp \cos \varphi)^2 }{|z|^2 + (v_3 - \vtp)^2}  \right )  \frac{(\polv, 0)}{|\olv|} \otimes \polvolv \nonumber \\
& - \frac{(r - \rp \cos \varphi)(v_3 - \vtp) }{|z|\sqrt{|z|^2 + (v_3 - \vtp)^2}}  \;  \frac{(\polv, 0)}{|\olv|} 
\otimes \vtz \nonumber \\
& + ( {^t E} S (\;({^\perp z}, \vtp - v_3)\; ) E, ES(\;({^\perp z}, \vtp - v_3)\; ) e_3 \otimes e_3 )
\end{align}
and
\begin{align}
\label{Equ73}
(A_{vx} ^+, A_{vv} ^+)  & =   \left ( 1 - \frac{(\rp )^2 \sin ^2 \varphi }{|z|^2 + (v_3 - \vtp)^2}  \right )  \frac{(\polv, 0)}{|\olv|} \otimes \olvpolv \nonumber \\
& + \frac{r - \rp \cos \varphi }{|z|} \;\frac{(\polv, 0)}{|\olv|} \otimes \left (\frac{({^\perp z}, 0)}{|z|}, 0 \right ) \nonumber \\
& - \left ( 1 - \frac{( r - \rp \cos \varphi )^2}{|z|^2 + (v_3 - \vtp)^2}  \right )  \frac{(\olv, 0)}{|\olv|} \otimes \polvolv \nonumber \\
& + \frac{(r - \rp \cos \varphi)(v_3 - \vtp) }{|z|\sqrt{|z|^2 + (v_3 - \vtp)^2}}  \;  \frac{(\olv, 0)}{|\olv|} 
\otimes \vtz \nonumber \\
& +
\frac{(r - \rp \cos \varphi ) (v_3 - \vtp )}{|z|^2 + (v_3 - \vtp)^2}\;e_3 \otimes \polvolv \nonumber \\
& + ( - e_3 \otimes e_3  S (\;({^\perp z}, \vtp - v_3)\; )  E, e_3 \otimes e_3S(\;({^\perp z}, \vtp - v_3)\; ) e_3 \otimes e_3 ).
\end{align}
It is easily seen that the matrix $A^+$ writes
\begin{align}
\label{Equ74}
A^+ & = \left ( 1 - \frac{(\rp )^2 \sin ^2 \varphi }{|z|^2 + (v_3 - \vtp)^2}  \right ) \olvpolv ^{\otimes 2}\nonumber \\
& + \left ( 1 - \frac{( r - \rp \cos \varphi )^2}{|z|^2 + (v_3 - \vtp)^2}  \right ) \polvolv ^{\otimes 2}\nonumber \\
& + \frac{r - \rp \cos \varphi }{|z|} \left ( \frac{({^\perp z}, 0)}{|z|}, 0\right )  \otimes  \olvpolv \nonumber \\
& + \frac{r - \rp \cos \varphi }{|z|} \olvpolv \otimes \left (\frac{({^\perp z}, 0)}{|z|}, 0 \right ) \nonumber \\
& - 
\frac{(r - \rp \cos \varphi ) (v_3 - \vtp )}{|z|\sqrt{|z|^2 + (v_3 - \vtp)^2}}\vtz \otimes \polvolv \nonumber \\
& - 
\frac{(r - \rp \cos \varphi ) (v_3 - \vtp )}{|z|\sqrt{|z|^2 + (v_3 - \vtp)^2}}\polvolv \otimes \vtz\nonumber \\
& + B^+ = A_1 ^+ + A_2 ^+ +A_3 ^+ +A_4 ^+ +A_5 ^+ +A_6 ^+ +B ^+
\end{align}
where 
\[
B^+ = \left ( 
\begin{array}{lll}
{^t E} S (\;({^\perp z}, \vtp - v_3)\; )  E  &  \;\;ES(\;({^\perp z}, \vtp - v_3)\; ) e_3 \otimes e_3\\
- e_3 \otimes e_3  S (\;({^\perp z}, \vtp - v_3)\; )  E  & \;\;e_3 \otimes e_3S(\;({^\perp z}, \vtp - v_3)\; ) e_3 \otimes e_3
\end{array}
\right).
\]
Observe that for any $z \in \R^2, v_3, \vtp \in \R$ the family 
\[
\frac{(z,0)}{|z|},\;\;\left (\frac{v_3 - \vtp}{\sqrt{|z|^2 + (v _3 - \vtp)^2}}\frac{^\perp z}{|z|}, \frac{|z|}{\sqrt{|z|^2 + (v _3 - \vtp)^2}}  \right ),\;\;\frac{({^\perp z}, \vtp - v_3)}{\sqrt{|z|^2 + (v _3 - \vtp)^2}}
\]
is a orthonormal basis of $\R^3$. Therefore we have
\begin{align*}
S(\;({^\perp z}, \vtp - v_3)\;)= \frac{(z,0)}{|z|}\otimes \frac{(z,0)}{|z|} + \left (\frac{v_3 - \vtp}{\sqrt{|z|^2 + (v _3 - \vtp)^2}}\frac{^\perp z}{|z|}, \frac{|z|}{\sqrt{|z|^2 + (v _3 - \vtp)^2}}  \right )^{\otimes 2}
\end{align*}
implying that for any $\xi = (\xi _x, \xi _v) \in \R^6$
\begin{align*}
& (B^+ \xi, \xi)  = S(\;({^\perp z}, \vtp - v_3)\;)  : ({^\perp \xi }_{\olx}, - \xi _{v_3}) \otimes  ({^\perp \xi} _{\olx}, - \xi _{v_3}) \\
& =  \left [\left ( \frac{({^\perp z},0)}{|z|}, 0\right ) ^{\otimes 2}     + 
\left (\frac{v_3 - \vtp}{\sqrt{|z|^2 + (v _3 - \vtp)^2}}\frac{( z,0)}{|z|}, -\frac{|z|e_3}{\sqrt{|z|^2 + (v _3 - \vtp)^2}}  \right )^{\otimes 2}
\right ] : \xi \otimes \xi 
\end{align*}
and thus
\begin{equation}
\label{Equ77} B^+ = \left ( \frac{({^\perp z},0)}{|z|}, 0\right ) ^{\otimes 2}     + 
\left (\frac{v_3 - \vtp}{\sqrt{|z|^2 + (v _3 - \vtp)^2}}\frac{( z,0)}{|z|}, -\frac{|z|e_3}{\sqrt{|z|^2 + (v _3 - \vtp)^2}}  \right )^{\otimes 2} = : B_1 ^ + + B_2 ^+.
\end{equation}
Observe that 
\begin{align}
\label{Equ78} A_1 ^+ + A_3 ^+ + A_4 ^+ + B_1 ^+ & = \frac{(\rp )^2 \sin ^2 \varphi \;(v_3 - \vtp )^2}{|z|^2 [\;|z|^2 + (v_3 - \vtp )^2 \;]}\;\olvpolv ^{\otimes 2} \nonumber \\
& + \left [\frac{r - \rp \cos \varphi }{|z|}\olvpolv + \left ( \frac{({^\perp z},0)}{|z|}, 0\right )     \right ] ^{\otimes 2}
\end{align}
since $(\frac{r - \rp \cos \varphi }{|z|} )^2 = 1 - \frac{(\rp )^2 \sin ^2 \varphi }{|z|^2 }$ and
\begin{align}
\label{Equ79} & A_2 ^+ + A_5 ^+ + A_6 ^+ + B_2 ^+  = \frac{(\rp )^2 \sin ^2 \varphi }{|z|^2 } \polvolv ^ {\otimes 2}  + \nonumber \\
& \left [ \frac{(r - \rp \cos \varphi )(v_3 - \vtp)}{|z|\sqrt{|z|^2 + (v _3 - \vtp)^2}}\polvolv - 
\vtz \right ] ^{\otimes 2}.
\end{align}
Our conclusion follows by combining \eqref{Equ74}, \eqref{Equ77}, \eqref{Equ78}, \eqref{Equ79}.
\end{proof}
\begin{proof} (of Proposition \ref{LossLFP}) 
We consider
\[
\xi _v(x,v) = \intvp{\sigma (|v - \vp|) S(v - \vp)f (x, v) \nabla _{\vp} f (x,\vp)}.
\]
By Proposition \ref{ComDivAve} we have
\begin{align*}
\ave{\qfpl ^- (f,f)} & = \ave{\divv \;\xi _v} = \frac{1}{\oc} \divolx {\left \{ \ave{{^\perp \xi _\olv}} + \ave{\xi _\olv \cdot \frac{\polv}{|\olv|}} \frac{\olv}{|\olv|} - 
\ave{\xi _\olv \cdot \frac{\olv}{|\olv|}} \frac{\polv}{|\olv|}  \right \}}  \\
& + \divolv\left \{\ave{\xi _\olv \cdot \frac{\polv}{|\olv|}} \frac{\polv}{|\olv|} + 
\ave{\xi _\olv \cdot \frac{\olv}{|\olv|}} \frac{\olv}{|\olv|}
\right \} + \partial _{v_3} \ave{\xi _{v_3}} \\
& = \mathrm{div}_{\oc x} \left \{ \ave{E \xi _v}   
+\ave{\xi _v \cdot \frac{(\polv, 0)}{|\olv|}} \frac{(\olv,0)}{|\olv|} - \ave{\xi _v \cdot \frac{(\olv,0)}{|\olv|}} \frac{(\polv,0)}{|\olv|} 
\right \} \\
& + \divv \left \{ 
\ave{\xi _v \cdot \frac{(\polv,0)}{|\olv|}} \frac{(\polv,0)}{|\olv|} + \ave{\xi _v \cdot \frac{(\olv,0)}{|\olv|}} \frac{(\olv,0)}{|\olv|}
+ \ave{e_3 \otimes e_3 \xi _v}\right \}.
\end{align*}
As in the proof of Proposition \ref{GainLFP} we obtain
\[
\nabla _{\vp} f (x,\vp)= b ^0 \cdot \gradxvp f\;\frac{(\polvp, 0)}{\oc |\olvp|^2} + \left ( -\frac{^\perp \gradolx f}{\oc}, \partial _{\vtp} f     \right ) + b ^4 \cdot \gradxvp f\;\frac{(\olvp, 0)}{ |\olvp|}
\]
and therefore 
\begin{align*}
& \ave{\xi _v}   = f(x,v)\avess{\frac{b ^0 \cdot \gradxvp f}{\oc |\olvp|^2}, (\polvp, 0)}  - f(x,v) \avess{\frac{\partial _{x_2} f }{\oc}, e_1} + f(x,v) \avess{\frac{\partial _{x_1} f }{\oc}, e_2} \\
& + f(x,v) \avess{ \partial _{\vtp} f, e_3  }
 + f (x,v) \avess{\frac{b ^4 \cdot \gradxvp f}{ |\olvp|}, (\olvp, 0)}  \\
& = - \oc ^2 \intolxpvp{\sigma f (x,v)\chi \frac{\rp - r \cos \varphi }{|z|} \frac{(z,0)}{|z|} \otimes \olvppolvp \gradoxvp f } \\
& - \oc ^2 \intolxpvp{\sigma f (x,v)\chi \frac{(\rp - r \cos \varphi ) ( v_3 - \vtp)}{|z|^2 + (v_3 - \vtp)^2}\left ( \frac{v_3 - \vtp}{|z|^2} \;^\perp z, 1\right ) \\
& \otimes \polvpolvp \gradoxvp f } \\
& - \oc ^2 \intolxpvp{\sigma f (x,v)\chi S ( \;({^\perp z}, \vtp - v_3)\;) (E, - e_3 \otimes e_3) \gradoxvp f }.
\end{align*}
Similarly, thanks to the identities 
\[
\ave{\frac{b^0 \cdot \gradxvp f}{\oc |\olvp|^2 }, (\polvp, 0), (\olv, 0)}_{\sigma S}  = \ave{\frac{b^4 \cdot \gradxvp f }{|\olvp|}, (\olvp, 0), (\polv, 0)}_{\sigma S} = 0 
\]
we obtain
\begin{align*}
& \ave{\xi _v \cdot \frac{(\olv, 0)}{|\olv|}} = - f (x,v)\avess{\frac{\partial _{x_2} f(x,\vp) }{\oc |\olv|}, (\olv, 0)}\!\!\!\!\cdot e_1 + f(x,v) \avess{\frac{\partial _{x_1} f (x, \vp)}{\oc |\olv|}, (\olv, 0)}\!\!\!\!\cdot e_2\\
& + f(x,v) \avess{\frac{\partial _{\vtp} f(x,\vp) }{|\olv|}, (\olv, 0)}\!\!\!\!\cdot e_3 + f(x,v) \avess{\frac{b^4 \cdot \gradxvp f (x,\vp)}{|\olv|\;|\olvp|}, (\olvp,0), (\olv,0)} \\
& = - \oc ^2 \intolxpvp{\sigma f \chi \frac{(\rp \cos \varphi - r )(v_3 - \vtp )}{|z|^2 + (v_3 - \vtp )^2}\left (\frac{v_3 - \vtp}{|z|} \frac{(z,0)}{|z|}, - e_3     \right )\cdot \gradoxvp f } \\
& - \oc ^2 \intolxpvp{\sigma f \chi \left ( \cos \varphi   + \frac{(r - \rp \cos \varphi )(\rp - r \cos \varphi )}{|z|^2 + (v_3 - \vtp )^2}\right )\polvpolvp \\
& \cdot \gradoxvp f   }
\end{align*}
and
\begin{align*}
& \ave{\xi _v \cdot \frac{(\polv, 0)}{|\olv|}} = - f (x,v)\avess{\frac{\partial _{x_2} f(x,\vp) }{\oc |\olv|}, (\polv, 0)}\!\!\!\!\cdot e_1 + f(x,v) \avess{\frac{\partial _{x_1} f (x, \vp)}{\oc |\olv|}, (\polv, 0)}\!\!\!\!\cdot e_2\\
& + f(x,v) \avess{\frac{\partial _{\vtp} f(x,\vp) }{|\olv|}, (\polv, 0)}\!\!\!\!\cdot e_3 + f(x,v) \avess{\frac{b^0 \cdot \gradxvp f (x,\vp)}{\oc |\olv|\;|\olvp|^2}, (\polvp,0), (\polv,0)} \\
& = - \oc ^2 \intolxpvp{\sigma f \chi \frac{\rp \cos \varphi - r }{|z|}\left (\frac{({^\perp z},0)}{|z|}, 0     \right )\cdot \gradoxvp f } \\
& + \oc ^2 \intolxpvp{\sigma f \chi \left ( \cos \varphi -   \frac{r \rp \sin ^2 \varphi }{|z|^2 + (v_3 - \vtp )^2}\right )\olvppolvp \cdot \gradoxvp f   }.
\end{align*}
Obviously $\ave{\qfpl ^- (f, f)}$ has the form in \eqref{Equ80} with
$
A^- = \left (
\begin{array}{rrr}
A_{xx} ^-  &  A_{xv} ^-\\
A_{vx}^-  &\;\;\;   A_{vv}^-
\end{array}
\right )
$
where
\begin{align}
\label{Equ82}
(A_{xx} ^-, A_{xv} ^-)  & = - \frac{\rp - r \cos \varphi }{|z|} \;\frac{({^\perp z}, 0)}{|z|} \otimes  \olvppolvp \nonumber \\
& + 
\frac{(\rp - r \cos \varphi ) (v_3 - \vtp )^2}{|z|[\;|z|^2 + (v_3 - \vtp)^2\;]}\;\frac{(z,0)}{|z|} \otimes \polvpolvp \nonumber \\
& + \left ( \cos \varphi  - \frac{r \rp  \sin ^2 \varphi }{|z|^2 + (v_3 - \vtp)^2}  \right )  \frac{(\olv, 0)}{|\olv|} \otimes \olvppolvp \nonumber \\
& + \frac{r - \rp \cos \varphi }{|z|} \;\frac{(\olv, 0)}{|\olv|} \otimes \left (\frac{({^\perp z}, 0)}{|z|}, 0 \right ) \nonumber \\
& +  \left ( \cos \varphi  + \frac{(r - \rp \cos \varphi)(\rp - r \cos \varphi) }{|z|^2 + (v_3 - \vtp)^2}  \right )  \frac{(\polv, 0)}{|\olv|} \otimes \polvpolvp \nonumber \\
& - \frac{(r - \rp \cos \varphi)(v_3 - \vtp) }{|z|^2 + (v_3 - \vtp)^2}  \;  \frac{(\polv, 0)}{|\olv|} 
\otimes \left (\frac{v_3 - \vtp}{ |z|}\;\frac{(z,0)}{|z|}, - e_3   \right ) \nonumber \\
& - E S (\;({^\perp z}, \vtp - v_3)\; )( E, -e_3 \otimes e_3 )
\end{align}
and
\begin{align}
\label{Equ83}
(A_{vx} ^-, A_{vv} ^-)  & =   \left ( \cos \varphi  - \frac{r \rp  \sin ^2 \varphi }{|z|^2 + (v_3 - \vtp)^2}  \right )  \frac{(\polv, 0)}{|\olv|} \otimes \olvppolvp \nonumber \\
& + \frac{r - \rp \cos \varphi }{|z|} \;\frac{(\polv, 0)}{|\olv|} \otimes \left (\frac{({^\perp z}, 0)}{|z|}, 0 \right ) \nonumber \\
& - \left ( \cos \varphi  + \frac{( r - \rp \cos \varphi )( \rp - r \cos \varphi )}{|z|^2 + (v_3 - \vtp)^2}  \right )  \frac{(\olv, 0)}{|\olv|} \otimes \polvpolvp \nonumber \\
& + \frac{(r -\rp \cos \varphi)(v_3 - \vtp) }{|z|^2 + (v_3 - \vtp)^2}  \;  \frac{(\olv, 0)}{|\olv|} 
\otimes \left (\frac{v_3 - \vtp}{|z|}\;\frac{(z,0)}{|z|}, - e_3   \right ) \nonumber \\
& -
\frac{(\rp - r \cos \varphi ) (v_3 - \vtp )}{|z|^2 + (v_3 - \vtp)^2}\;e_3 \otimes \polvpolvp \nonumber \\
& - e_3 \otimes e_3  \;S (\;({^\perp z}, \vtp - v_3)\; )  ( E, - e_3 \otimes e_3).
\end{align}
It is easily seen that the matrix $A^-$ writes
\begin{align}
\label{Equ86}
A^- & = \left ( \cos \varphi  - \frac{r \rp  \sin ^2 \varphi }{|z|^2 + (v_3 - \vtp)^2}  \right ) \olvpolv \otimes \olvppolvp \nonumber \\
& + \left ( \cos \varphi + \frac{( r - \rp \cos \varphi )(\rp - r \cos \varphi )}{|z|^2 + (v_3 - \vtp)^2}  \right ) \polvolv \otimes \polvpolvp \nonumber \\
& - \frac{\rp - r \cos \varphi }{|z|} \left ( \frac{({^\perp z}, 0)}{|z|}, 0\right )  \otimes  \olvppolvp \nonumber \\
& + \frac{r - \rp \cos \varphi }{|z|} \olvpolv \otimes \left (\frac{({^\perp z}, 0)}{|z|}, 0 \right ) \nonumber \\
& + 
\frac{(\rp - r \cos \varphi ) (v_3 - \vtp )}{|z|\sqrt{|z|^2 + (v_3 - \vtp)^2}}\vtz \otimes \polvpolvp \nonumber \\
& - 
\frac{(r - \rp \cos \varphi ) (v_3 - \vtp )}{|z|\sqrt{|z|^2 + (v_3 - \vtp)^2}}\polvolv \otimes \vtz\nonumber \\
& + B^- = A_1 ^- + A_2 ^- +A_3 ^- +A_4 ^- +A_5 ^- +A_6 ^- +B ^-
\end{align}
where, cf. \eqref{Equ77}
\begin{align}
\label{Equ87}
B^- & = \left ( 
\begin{array}{lll}
{^t E} S (\;({^\perp z}, \vtp - v_3)\; )  E  &  \;\;ES(\;({^\perp z}, \vtp - v_3)\; ) e_3 \otimes e_3\\
- e_3 \otimes e_3  S (\;({^\perp z}, \vtp - v_3)\; )  E  & \;\;e_3 \otimes e_3S(\;({^\perp z}, \vtp - v_3)\; ) e_3 \otimes e_3
\end{array}
\right)\\
& =  \left ( \frac{({^\perp z},0)}{|z|}, 0\right ) ^{\otimes 2}     + 
\left (\frac{v_3 - \vtp}{\sqrt{|z|^2 + (v _3 - \vtp)^2}}\frac{( z,0)}{|z|}, -\frac{|z|e_3}{\sqrt{|z|^2 + (v _3 - \vtp)^2}}  \right )^{\otimes 2} = : B_1 ^ - + B_2 ^-.\nonumber 
\end{align}
Observe that 
\begin{align}
\label{Equ84} A_1 ^- + A_3 ^- + A_4 ^- + B_1 ^- & = \frac{r\rp \sin ^2 \varphi \;(v_3 - \vtp )^2}{|z|^2 [\;|z|^2 + (v_3 - \vtp )^2 \;]}\;\olvpolv \otimes \olvppolvp \nonumber \\
& + \left [\frac{r - \rp \cos \varphi }{|z|}\olvpolv + \left ( \frac{({^\perp z},0)}{|z|}, 0\right )     \right ] \nonumber \\
& \otimes 
\left [\frac{r \cos \varphi - \rp }{|z|}\olvppolvp + \left ( \frac{({^\perp z},0)}{|z|}, 0\right )     \right ] 
\end{align}
since 
\[
\frac{r \rp \sin ^2 \varphi \;(v_3 - \vtp)^2}{|z|^2 [\;|z|^2 + (v _3 - \vtp )^2\;]} + \frac{(r - \rp \cos \varphi ) (r \cos \varphi - \rp) }{|z|^2}  = \cos \varphi  - \frac{r \rp  \sin ^2 \varphi }{|z|^2 + (v _3 - \vtp )^2}
\]
and
\begin{align}
\label{Equ85} & A_2 ^- + A_5 ^- + A_6 ^- + B_2 ^-  = \frac{r \rp  \sin ^2 \varphi }{|z|^2 } \polvolv \otimes \polvpolvp   \nonumber \\
& + \left [ \frac{(r - \rp \cos \varphi )(v_3 - \vtp)}{|z|\sqrt{|z|^2 + (v _3 - \vtp)^2}}\polvolv - 
\vtz \right ] \nonumber \\
& \otimes  \left [ \frac{(r \cos \varphi - \rp)(v_3 - \vtp)}{|z|\sqrt{|z|^2 + (v _3 - \vtp)^2}}\polvpolvp - 
\vtz \right ]
\end{align}
since
\[
\frac{r \rp \sin ^2 \varphi}{|z|^2} + \frac{(\rp \cos \varphi - r ) ( \rp - r \cos \varphi ) (v_3 - \vtp )^2}{|z|^2 \;[ \;|z|^2 + (v _3 - \vtp ) ^2\;]} = \cos \varphi + \frac{(r - \rp \cos \varphi ) (\rp - r \cos \varphi )}{|z|^2 + (v_3 - \vtp )^2}
\]
Our conclusion follows by combining \eqref{Equ86}, \eqref{Equ87}, \eqref{Equ84}, \eqref{Equ85}.
\end{proof}


\begin{thebibliography}{999}



\bibitem{BosAsyAna} M. Bostan, {The Vlasov-Poisson system with strong external magnetic field. Finite Larmor radius regime}, Asymptot. Anal. 61(2009) 91-123.


\bibitem{BosTraEquSin} {M. Bostan}, {Transport equations
with disparate advection fields. Application to the gyrokinetic models in plasma physics}, J. Differential Equations 249(2010) 1620-1663.


\bibitem{BosGuiCen3D} {M. Bostan}, {Gyro-kinetic Vlasov equation in three dimensional setting. Second order approximation}, SIAM J. Multiscale Model. Simul. 8(2010) 1923-1957.


\bibitem{BosNeg09} {M. Bostan, C. Negulescu}, {Mathematical models for strongly magnetized plasmas with mass disparate particles}, Discrete Contin. Dyn. Syst. Ser. B 15(2011) 513-544.


\bibitem{Bos11} {M. Bostan}, {Transport of charged particles under fast oscillating magnetic fields}, SIAM J. Math. Anal. 44(2012) 1415-1447.


\bibitem{CRASBosCal12} {M. Bostan, C. Caldini}, {Finite Larmor radius approximation for collisional magnetized plasmas}, submitted to C. R. Math. Acad. Sci. Paris.


\bibitem{Bri00} {A.J. Brizard}, {Variational principle for non linear gyrokinetic Vlasov-Maxwell equations}, Phys. Plasmas 7(2000) 4816-4822. 


\bibitem{Bri04} A.J. Brizard, A guiding-center Fokker-Planck collision operator for nonuniform magnetic fields, Phys. Plasmas 11(2004) 4429-4438.


\bibitem{BriHah07} A.J. Brizard, T.S. Hahm, {Foundations of nonlinear gyrokinetic theory}, Rev. Modern Phys. 79(2007) 421-468.


\bibitem{CerIllPul} {C. Cercignani, R. Illner, M. Pulvirenti}, {The mathematical theory of dilute gases}, Applied Mathematical Sciences, 106, Springer-Verlag New-York 1994.


\bibitem{DevVilOne} {L. Desvillettes, C. Villani}, {On the spatially homogeneous Landau equation for hard potentials. I Existence, uniqueness and smoothness}, Comm. Partial Differential Equations 25(2000) 179-259.


\bibitem{DevVilTwo} {L. Desvillettes, C. Villani}, {On the spatially homogeneous Landau equation for hard potentials. II H-theorem and applications}, Comm. Partial Differential Equations 25(2000) 261-298.


\bibitem{Dev04} {L. Desvillettes}, {Plasma kinetic models: the Fokker-Planck-Landau equation}, Modelling and computational methods for kinetic equations, 171-193, Model. Simul. Sci. Eng. Technol., Birkh\"auser Boston, MA 2004. 


\bibitem{Fre06} {E. Fr\'enod}, {Application of the averaging method to the gyrokinetic plasma}, Asymptot. Anal. 46(2006) 1-28.


\bibitem{FreMou10} {E. Fr\'enod, A. Mouton}, {Two-dimensional finite Larmor radius approximation in canonical gyrokinetic coordinates}, J. Pures Appl. Math. Adv. Appl. 4(2010) 135-169.


\bibitem{FreSon98} {E. Fr\'enod, E. Sonnendr\"ucker},
{Homogenization of the Vlasov equation and of the Vlasov-Poisson
system with strong external magnetic field}, Asymptotic Anal.
18(1998) 193-213.


\bibitem{FreSon01} {E. Fr\'enod, E. Sonnendr\"ucker}, The finite
Larmor radius approximation, SIAM J. Math. Anal. 32(2001) 1227-1247.


\bibitem{GarDifSarGra09} X. Garbet, G. Dif-Pradalier, C. Nguyen, Y. Sarazin, V. Grandgirard, Ph. Ghendrih, Neoclassical equilibrium in gyrokinetic simulations, Phys, Plasmas 16(2009).


\bibitem{GolSai99} {F. Golse, L. Saint-Raymond}, {The Vlasov-Poisson
system with strong magnetic field}, J. Math. Pures Appl. 78(1999)
791-817.


\bibitem{HazMei03} R.D. Hazeltine, J.D. Meiss, Plasma confinement, Dover Publications, Inc. Mineola, New York, 2003.


\bibitem{MarRinSch90} P. Markowich, C. Ringhofer, C. Schmeiser, Semiconductor equations, Springer-Verlag, Vienna, 1990.


\bibitem{PouSch91} F. Poupaud, C. Schmeiser, {Charge transport in semiconductors with degeneracy effects}, Math. Methods Appl. Sci. 14(1991) 301-318.


\bibitem{Pou92} F. Poupaud, Runaway phenomena and fluid approximation under high fields in semiconductor kinetic theory, Z. Angew. Math. Mech. 72(1992) 359-372.


\bibitem{SaiRay02} L. Saint-Raymond, {Control of large velocities in the two-dimensional gyrokinetic approximation}, J. Math. Pures Appl. 81(2002) 379-399.


\bibitem{XuRos91} X.Q. Xu, M.N. Rosenbluth, Numerical simulation of ion-temperature-gradient-driven modes, Phys. Fluids, B 3(1991) 627-643.



\end{thebibliography}
\end{document}